\tikzstyle{index on}=[inner sep=2pt, white, circle, fill=black]
\tikzstyle{index off}=[inner sep=2pt, black, circle, draw]
\tikzstyle{index gray}=[inner sep=2pt, black, circle, fill=lightgray]
\tikzstyle{opaque}=[fill=gray,fill opacity=.1]
\newtheorem{theorem}{Theorem}[section]
\newtheorem{lemma}[theorem]{Lemma}
\newtheorem{corollary}[theorem]{Corollary} 
\theoremstyle{definition}
\newtheorem{definition}[theorem]{Definition}
\theoremstyle{remark}
\newtheorem{example}[theorem]{Example}
\newtheorem{proposition}[theorem]{Proposition}
\renewcommand{\phi}{\varphi}
\newcommand\<{\langle}
\renewcommand\>{\rangle}
\renewcommand{\aa}{\alpha}
\renewcommand{\P}{\ensuremath{\mathcal{P}}}
\newcommand{\R}{\mathcal{R}}
\renewcommand{\L}{\ensuremath{\mathcal{L}}}
\renewcommand{\int}{\textsf{int}}
\newcommand{\dep}{\ensuremath{\mathop{=\!\!}}}
\newcommand{\Lp}{\ensuremath{\mathcal{L}}\xspace}
\newcommand{\Ld}{\ensuremath{\mathcal{L}_d}\xspace}
\newcommand{\cpl}{\textsf{CPL}\xspace}
\newcommand{\ipl}{\textsf{IPL}\xspace}
\newcommand{\s}{\textsf{S}\xspace}
\newcommand{\dn}{\textsf{DNE}}
\newcommand{\inqb}{\textsf{InqB}\xspace}
\newcommand{\lqd}{\ensuremath{\textsf{InqB}^\lor}\xspace}
\newcommand{\ilqd}{\textsf{InqI}\xspace}
\newcommand{\inqi}{\textsf{InqI}\xspace}
\newcommand{\inqim}{\ensuremath{\textsf{InqI}^-}\xspace}
\newcommand{\lqdp}{\ensuremath{\textsf{InqB}^\lor}\xspace}
\newcommand{\entilqd}{\ensuremath{\models_{\textsf{InqI}}}\xspace}
\newcommand{\prilqd}{\ensuremath{\vdash_{\textsf{InqI}}}\xspace}
\newcommand{\preqilqd}{\ensuremath{\dashv\vdash_{\textsf{InqI}}}\xspace}
\newcommand{\deq}{\overset{*}{=}}
\newcommand{\MT}{\ensuremath{\textsf{MT}_0}\xspace}
\newcommand{\Sf}{\textsf{S4}\xspace}
\def\DISJI{\rotatebox[origin=c]{-90}{$\!{\mathlarger{\mathlarger{\mathlarger{\mathlarger{\,\mathlarger{\geqslant}}}}}}$}}
\def\Disji{\rotatebox[origin=c]{-90}{$\!{\mathlarger{\mathlarger{\,\mathlarger{\geqslant}}}}$}}
\def\disji{\rotatebox[origin=c]{-90}{$\!{\geqslant}$}}
\newcommand{\lori}{\ensuremath{\,\disji\,}}
\newcommand{\Lori}{\,\Disji\,}
\renewcommand{\aa}{\alpha}
\newcommand{\bb}{\beta}
\begin{document}

\begin{frontmatter}

  \title{Questions and dependency in intuitionistic logic}
  \runtitle{Questions and dependency in intuitionistic logic}

  \author{\fnms{Ivano} 
    \snm{Ciardelli}
    \corref{}
    \ead[label=e1]{sobekal@hotmail.com}
  },
  \address{Institute for Logic, Language and Computation\\
    University of Amsterdam\\
    Science Park 107, 1098 XG Amsterdam\\
    The Netherlands\\
    \printead{e1}
  }%
  \author{\fnms{Rosalie}
    \snm{Iemhoff}\ead[label=e2]{R.Iemhoff@uu.nl}}
  \address{Department of Philosophy and Religious Studies\\
    Utrecht University\\
    Janskerkhof 13, 3512 BL Utrecht\\ 
The Netherlands\\
    \printead{e2} }
      \and%
  \author{\fnms{Fan}
    \snm{Yang}\ead[label=e3]{fan.yang.c@gmail.com}}
  \address{Department of Values, Technology and Innovation\\
    Delft University of Technology\\
    Jaffalaan 5, 2628 BX Delft\\ 
The Netherlands\\
    \printead{e3} }


  \runauthor{I. Ciardelli, R. Iemhoff and F.~Yang}

\begin{abstract}
In recent years, the logic of questions and dependencies has been investigated in the closely related frameworks of inquisitive logic and dependence logic. These investigations have assumed classical logic as the background logic of statements, and added formulas expressing questions and dependencies to this classical core. In this paper, we broaden the scope of these investigations by studying questions and dependency in the context of intuitionistic logic. We propose an intuitionistic team semantics, where teams are embedded within intuitionistic Kripke models. The associated logic is a conservative extension of intuitionistic logic with questions and dependence formulas. We establish a number of results about this logic, including a normal form result, a completeness result, and translations to classical inquisitive logic and modal dependence logic. 
\end{abstract}

\begin{keyword}[class=AMS]
  \kwd[Primary ]{03B65} \kwd{03B20} \kwd[; Secondary ]{03B60}
\end{keyword}

\begin{keyword}
  \kwd{inquisitive logic} \kwd{dependence logic} \kwd{team semantics} \kwd{intuitionistic logic}
\end{keyword}

\end{frontmatter}

\section{Introduction}

Traditionally, the role of a semantics for a logic is to provide a relation of truth between the formulas of the logic and some mathematical objects that stand for states of affairs. For instance, in the standard semantics for classical propositional logic, a state of affairs is modeled by a propositional valuation, and the semantics specifies when a valuation makes a formula true. The fundamental logical relation of entailment between formulas is then characterized as preservation of truth: an entailment holds when the conclusion is true whenever all the assumptions are.

The last decade, however, has seen the rise of logics in which formulas are evaluated not with respect to objects that represent state of affairs---say, propositional valuations---but with respect to \emph{sets} of such objects, that we will refer to as \emph{teams}. Such logics have arisen independently in two different lines of research, namely, \emph{dependence logic} and \emph{inquisitive logic}.

The main motivation of dependence logic is to expand systems of classical logic with formulas that express the existence of certain dependencies. For instance, consider a formula $\dep(p,q)$ which expresses the fact that the truth value of a proposition $q$ is completely determined by the truth value of another proposition $p$. In a standard semantic framework, it is not clear how such a formula should be interpreted: with respect to a specific propositional valuation, both $p$ and $q$ have well-defined truth-values, and it is not clear what it would mean for the value of $q$ to be determined by the value of $p$. This can be solved once we turn from single valuations to sets of valuations. In such a set $t$, it is very clear what it means for the value of $q$ to be determined by the value of $p$: this means that if two valuations in $t$  agree on the value of $p$, they must also agree on the value of $q$. We can take this to be the satisfaction condition for the formula $\dep(p,q)$ at $t$. In general, the fundamental idea is that dependencies manifest themselves in the presence of a plurality of states of affairs, and should therefore be viewed as properties of sets of states of affairs rather than as properties of single states of affairs.

The main motivation for inquisitive logic comes from a different limitation of the truth-conditional approach. While truth-conditional semantics provides a suitable starting point for an analysis of statements, such as \emph{it is raining}, it does not seem equally suited to analyze questions, such as `\emph{whether or not it is raining}' or `\emph{whether it is raining or snowing}'. For concreteness, consider a formula $?p$ which expresses the polar question \emph{whether or not $p$}. Again, it is not clear what it should mean for such a question to be true relative to a single propositional valuation. However, we can interpret this question naturally relative to a set $t$ of valuations: $?p$ will be supported by $t$ if all the valuations in $t$ agree on whether $p$ is the case. More generally, the idea is that a team $t$ can be viewed as an information state: the information encoded by $t$ is that the actual state of affairs is an element of $t$. The idea is then to interpret both statements and questions in terms of whether the information in $t$ suffices to support them, where supporting a statement means establishing that it is true, and supporting a question means settling the issue it expresses. In this way, team semantics provides a more general semantic framework which is suitable for both kinds of sentences.


While these two lines of research have arisen independently from one another, they are in fact deeply related. Yang \cite{Yang:14}  first noticed and exploited a tight similarity between propositional systems of inquisitive and dependence logic. Later on, Ciardelli \cite{Ciardelli:16dependency} argued that this similarity is not accidental, and that a fundamental relation exists between questions and dependency. Namely, the relation of dependency is just a facet of the fundamental logical notion of entailment, once this applies to questions, rather than statements. Given the link existing between entailment and the implication operator, this also provides a general way to express dependencies as implications between questions, generalizing the pattern expressed by the specialized atoms of dependence logic. Taking inspiration from this connection, Yang and V\"{a}\"{a}n\"{a}nen developed a simplified deduction system for propositional dependence logic in \cite{YangVaananen:16}.

In the past decade, both areas of research have grown rapidly.\footnote{For dependence logic see, e.g., \cite{Vaananen:07,Vaananen:08,GradelVaananen:13,Galliani:12,GallianiHella:13fp,EHMMVV2013,Yang2011,YangVaananen:16,YangVaananen:17PT}; for inquisitive logic see, e.g., \cite{CiardelliRoelofsen:11jpl,CiardelliRoelofsen:15idel,Ciardelli:14aiml,Ciardelli:16qait,Ciardelli:16,Ciardelli:15inqd,Puncochar:15weaknegation,Frittella:16}.} With the exception of Pun\v coch\'a\v r's  work \cite{Puncochar:15generalization,Puncochar:16algebras}, discussed in Section \ref{sec:related work}, the research has  focused on enriching existing systems of classical logic (propositional, modal, or first-order) with questions and dependence formulas. 
 However, there is no \emph{a priori} reason why investigating the logic of questions and dependency would require a commitment to an underlying classical logic of statements. A major open question that has so far received little attention  is how questions and dependencies should be analyzed semantically---and what logical features they have---in a non-classical setting.

This question is interesting not just because non-classical logics play an important role in a number of areas---from constructive mathematics to philosophy to technological applications---but also because it would allow us to understand much better which features of  inquisitive and dependence logics are due to the classicality of the underlying logic of statements, and which features depend only on the way in which questions and dependencies are related to the underlying logical basis---regardless of what this is taken to be.

In this paper we take a first step towards exploring this important question by investigating propositional questions and dependencies in the context of intuitionistic logic. The semantics that we will propose can be viewed as a generalization of the standard semantics for propositions inquisitive and dependence logic, which in turn can be recovered by specializing our semantics to a particular class of models. We will see that, once we start out with the right set-up for the classical case, the generalization works out smoothly, and many results carry over  straightforwardly from the classical to the intuitionistic setting. This includes the strong normal form result that characterizes propositional inquisitive  logic, as well as an axiomatization of the associated logic. In terms of a natural deduction system,  the only difference between a classical system and our intuitionistic system lies in the availability of the double negation law for standard propositional formulas. On the other hand, we will also see that the intuitionistic setting is in many ways more fine-grained than the classical one. For instance, even in terms of truth with respect to single worlds, standard disjunctions come apart from inquisitive disjunctions---in contrast with the situation in the classical case. Similarly, some syntactic manipulation techniques that are heavily exploited in inquisitive logic, such as applying double negation to obtain a non-inquisitive formula, are no longer available in the intuitionistic case.

The paper is structured as follows: in Section 2 we review how a team-based approach allows us to add questions and dependence formulas to classical propositional logic; we also recall a number of important notions and results that we will later consider in the intuitionistic setting. In Section 3 we provide a team-based semantics for intuitionistic propositional logic, and we show that this allows us to introduce questions and dependence formulas in the intuitionistic setting. In Section 4 we study in detail the logic that arises from this system. The relations between our proposal and recent work by Pun\v coch\'a\v r \cite{Puncochar:15generalization,Puncochar:16algebras} are discussed in Section 5. Section 6 sums up our findings and outlines some directions for further work.

\section{Background: questions in classical logic}

In this section, we provide some background on how questions and dependence formula can be added on top of classical propositional logic, and what logic results from this move. 
Following Ciardelli \cite{Ciardelli:16dependency}, we first review how classical propositional logic can be re-implemented in a team semantic setting, then show how questions can be added to this system, and how dependencies arise naturally as implications among questions. The system that we present 
is an extension of standard propositional inquisitive logic \inqb\ \citep{CiardelliRoelofsen:11jpl} with the tensor disjunction connective from dependence logic (introduced by V\"a\"an\"anen \cite{Vaananen:07}, and studied in the propositional setting by Yang \cite{YangVaananen:16}). We will refer to this system as \lqd. For proofs of the results presented in this section, the reader is referred to \cite{Ciardelli:16dependency}.


\subsection{Support semantics for classical logic}
\label{sub:support for classical logic}

The first step to get at a classical logic of questions and dependencies is to provide a semantics for classical propositional logic (\cpl) which is based not, as usual, on the notion of truth relative to a state of affairs, but rather on the notion of support relative to a state of information.

\subsubsection{Syntax}

For our purposes, it will be convenient to start from a formulation of classical logic which takes the connectives $\bot,\land,\lor,$ and $\to$ as primitive operators. Standard propositional formulas are formulas built up from a set $\P$ of atomic sentences by means of these operators. We will denote the set of standard formulas by $\L_!$, and we will think of this as our language for statements, on top of which questions will be added in the next section. 
%
%
Negation is regarded as a defined connective, by setting $\neg\phi:=\phi\to\bot$. 

\subsubsection{Models}

The context for our semantics is provided by a \emph{possible world model} for classical logic: this is a model that represents at once a multitude of possible states of affairs, called \emph{possible worlds}, each of which is completely described by a valuation function for the atomic sentences of our language.\footnote{This semantic set-up differs slightly from the one assumed in most previous work on propositional inquisitive and dependence logic \citep[e.g.,][]{CiardelliRoelofsen:11jpl,YangVaananen:16}. Most previous work assumes a fixed model $\omega$, having the propositional valuations themselves as possible worlds. Since this model contains a copy of each possible state of affairs, the difference between the two setups is immaterial to the logic. The setup used here is adopted in \cite{Ciardelli:16} as it facilitates the transition between the propositional setting and the modal setting. Similarly, in this paper, this choice facilitates the transition from a classical to an intuitionistic semantic basis. In both cases, one only needs to equip the space of possible worlds with additional structure on top of the propositional valuation. }

\begin{definition}[Classical possible world models]~\\
A \emph{classical possible world model} for a set \P\ of atoms is a pair $M=\<W,V\>$, where:
\begin{itemize}
\item  $W$ is a set, whose elements we refer to as \emph{possible worlds}; 
\item $V:W\times\P\to\{0,1\}$ is map that we refer to as the \emph{valuation function}.
\end{itemize}
\end{definition}

\noindent
Given a model $M=\<W,V\>$, we refer to a set $t\subseteq W$ as a \emph{team}. Below, we will use $t$ and $s$, as well as variants like $t',t'',\dots$ as meta-variables ranging over teams.

Intuitively, a team may be thought of as encoding a body of information: if $w\in t$, this means that $w$ is compatible with the information available in $t$; if $w\not\in t$, then $t$ is ruled out by the information in $t$. In other words, we can view $t$ as encoding the information that the actual state of affairs is one of those contained in $t$. Due to this informational interpretation, in the inquisitive semantics literature teams are referred to as \emph{information states}.
If $t'\subseteq t$, this means that $t'$ contains at least as much information as $t$, and possibly more. In this case, we say that $t'$ is at least as strong as $t$, or an \emph{extension} of~$t$. The weakest of all teams is the total team, $W$, which is compatible with all possible worlds. The strongest team is the empty team, $\emptyset$, which is compatible with no possible world. We refer to $\emptyset$ as the \emph{inconsistent team}, and to any team $t\neq\emptyset$ as a \emph{consistent team}.

\subsubsection{Semantics}

Standardly, in a possible world model a semantics is specified in the form of a relation of \emph{truth}, holding between formulas and possible worlds.  By contrast, in inquisitive and dependence logic, a semantics is given in terms of a relation of \emph{support}, holding between formulas and \emph{teams}. Intuitively, if formulas of classical logic are, as usual, regarded as statements, then the relation $t\models\phi$ may be read as specifying when the information available in $t$ suffices to establish that $\phi$ is the case. 

\begin{definition}[Support semantics for classical logic]\label{def:support classical}~\\
Let $M$ be a possible world model. The relation of support between teams $t$ and formulas $\phi\in\L_!$ is defined inductively as follows:
\begin{itemize}
\item $M,t\models p\iff V(w,p)=1$ for all $w\in t$
\item $M,t\models\bot\iff t=\emptyset$
\item $M,t\models\phi\land\psi\iff M,t\models\phi$ and $M,t\models\psi$
\item $M,t\models\phi\lor\psi\iff \exists t',t''\text{ such that }t=t'\cup t''$, $M,t'\models\phi$ and $M,t''\models\psi$ 
\item $M,t\models\phi\to\psi\iff\forall t'\subseteq t$, $\,M,t'\models\phi\,$ implies $\,M,t'\models\psi$
\end{itemize}
\end{definition}

\noindent The clauses can be read as follows: an atom $p$ is established in $t$ in case only worlds in which $p$ is true are compatible with the information in $t$. The falsum constant $\bot$ is established in $t$ just in case $t$ is inconsistent. A conjunction is established in $t$ in case both conjuncts are established. A disjunction is established in $t$ if the possible worlds compatible with $\phi$ can be subdivided in two teams $t'$ and $t''$, where $t'$ establishes $\phi$ and $t''$ establishes $\psi$. Finally, an implication $\phi\to\psi$ is established in $t$ if 
extending the information in $t$ so as to establish $\phi$ is bound to lead to a state where $\psi$ is established as well.   These clauses yield the following clause for negation: $\neg\phi$ is established in $t$ if it extending $t$ so as to establish  $\phi$ is bound to lead to inconsistency.

\begin{itemize}
\item $M,t\models\neg\phi\iff\forall t'\subseteq t$, $\;M,t'\models\phi\,$ implies $\,t'=\emptyset$
\end{itemize}

\subsubsection{Truth and conservativity}

\noindent 
Even though our semantics is based on the notion of support at a team, the standard notion of truth can be recovered: it suffices to define truth at a world as support with respect to the corresponding singleton team. 

\begin{definition}[Truth conditions]~\\
Let $M$ be a model and $w$ a world. We say that a formula $\phi$ is \emph{true} at $w$, and write $M,w\models\phi$, in case $M,\{w\}\models\phi$.
\end{definition}

\noindent The following proposition shows that this indeed coincides with the standard notion of truth in classical propositional logic. 

\begin{proposition}[Truth-conditions for standard formulas]\label{prop:truth-conditions classical}~\\
For any model $M=\<W,V\>$ and any world $w\in W$ we have: 
\begin{itemize}
\item $M,w\models p\iff V(w,p)=1$
\item $M,w\not\models\bot$
\item $M,w\models\phi\land\psi\iff M,w\models\phi$ and $M,w\models\psi$
\item $M,w\models\phi\lor\psi\iff M,w\models\phi$ or $M,w\models\psi$
\item $M,w\models\phi\to\psi\iff M,w\not\models\phi$ or $M,w\models\psi$
\end{itemize}
\end{proposition}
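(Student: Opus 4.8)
The plan is to prove the proposition by a straightforward induction on the structure of $\phi\in\L_!$, simply reading off each clause of Definition~\ref{def:support classical} in the special case where the team is the singleton $\{w\}$. Before running this induction I would first record one auxiliary fact that it leans on, namely the \emph{empty team property}: $M,\emptyset\models\phi$ for every $\phi\in\L_!$ and every model $M$. This is itself a trivial sub-induction on $\phi$ — the atomic and $\bot$ clauses give it immediately, conjunction follows from the induction hypothesis, disjunction follows by splitting $\emptyset=\emptyset\cup\emptyset$, and implication follows because the only subteam of $\emptyset$ is $\emptyset$ itself, which supports the consequent by the induction hypothesis.

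For the main induction, the atomic and $\bot$ cases are immediate: $M,\{w\}\models p$ unwinds to ``$V(w',p)=1$ for all $w'\in\{w\}$'', i.e.\ $V(w,p)=1$, and $M,\{w\}\models\bot$ fails since $\{w\}\neq\emptyset$, so $M,w\not\models\bot$. The conjunction case is a direct transcription of the support clause together with the induction hypothesis.

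The two cases that call for a (mild) argument are disjunction and implication, because there the support clause quantifies over decompositions, respectively subteams, of $\{w\}$, and the point is that these are highly constrained for a singleton: the only subteams of $\{w\}$ are $\emptyset$ and $\{w\}$, and whenever $\{w\}=t'\cup t''$ at least one of $t',t''$ equals $\{w\}$. For $\phi=\psi\lor\chi$: if $M,\{w\}\models\psi\lor\chi$, pick a witnessing decomposition; since one part is $\{w\}$, the induction hypothesis gives $M,w\models\psi$ or $M,w\models\chi$. Conversely, if, say, $M,w\models\psi$, then $\{w\}=\{w\}\cup\emptyset$ witnesses $M,\{w\}\models\psi\lor\chi$, using the empty team property for $\chi$. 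For $\phi=\psi\to\chi$: the subteam $\emptyset$ imposes no constraint on $M,\{w\}\models\psi\to\chi$ (it supports both $\psi$ and $\chi$ by the empty team property), so the condition reduces to ``$M,\{w\}\models\psi$ implies $M,\{w\}\models\chi$'', which by the induction hypothesis is precisely ``$M,w\not\models\psi$ or $M,w\models\chi$''.

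I do not expect a genuine obstacle here; the proposition is routine. The only thing one must be careful not to overlook is the empty team property, since it is exactly what renders the $\emptyset$ summand harmless in a disjunction decomposition of $\{w\}$ and the $\emptyset$ subteam vacuous in the implication clause — without it, the disjunction and implication cases would not collapse cleanly to the classical truth clauses.
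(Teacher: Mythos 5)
Your proof is correct. The paper does not actually prove Proposition~\ref{prop:truth-conditions classical}: it defers all proofs in Section~2 to the cited reference \cite{Ciardelli:16dependency}, so there is no in-text argument to compare against. Your argument is the expected routine one, and you correctly isolate the only non-trivial ingredient, the empty team property, which is exactly what makes the $\emptyset$ component of a decomposition of $\{w\}$ harmless in the $\lor$ case and the $\emptyset$ subteam vacuous in the $\to$ case. One small observation: since $M,w\models\chi$ is \emph{defined} as $M,\{w\}\models\chi$, each clause follows by directly unwinding the support definition at the singleton team, so the outer induction is not really load-bearing (only the sub-induction for the empty team property is); this is harmless. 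It is also worth noting that for the intuitionistic analogue (Proposition~\ref{prop:intuitionistic truth-conditions}) the paper takes a different route, first proving truth-conditionality of all standard formulas (Proposition~\ref{prop:truth-conditionality}) and then deriving the world-level clauses from it; your direct singleton argument works in the classical case precisely because there a singleton team has only trivial subteams, which is no longer true of $R[w]$ in a Kripke model.
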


\noindent
This proposition shows that the standard notion of truth is completely determined by our support definition. Conversely, let us say that a formula is \emph{truth-conditional} when its support conditions are completely determined by its truth-conditions, in the sense that support at a team $t$ amounts to truth at each world in $t$.

\begin{definition}[Truth-conditionality]~\\
We say that a formula $\phi$ is \emph{truth-conditional} in case for any model $M$ and team $t$: 
$$M,t\models\phi\iff \forall w\in t:\; M,w\models\phi$$
\end{definition}

\noindent Truth-conditional formulas are also called \emph{flat} formulas in the dependence logic literature.
It is easy to show by induction that all standard propositional formulas are in fact truth-conditional. Thus, for standard formulas the relation of truth completely determines our relation of support.
\begin{proposition}\label{prop:truth-conditionality classical} All standard formulas are truth-conditional
\end{proposition}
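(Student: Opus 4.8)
The plan is to prove the statement by a straightforward induction on the structure of the standard formula $\phi\in\L_!$, using Proposition~\ref{prop:truth-conditions classical} to pass back and forth between support at a singleton team $\{w\}$ and truth at the world $w$.

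For the base cases: when $\phi=p$ is atomic, the support clause gives $M,t\models p\iff V(w,p)=1$ for all $w\in t$, which by Proposition~\ref{prop:truth-conditions classical} is exactly $M,w\models p$ for all $w\in t$. When $\phi=\bot$, support at $t$ holds iff $t=\emptyset$; and since $M,w\not\models\bot$ for every $w$, the condition $\forall w\in t:M,w\models\bot$ likewise holds iff $t=\emptyset$.

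For the inductive step I would handle each connective in turn. The cases $\psi\land\chi$ and $\psi\to\chi$ are direct. For conjunction, split $M,t\models\psi\land\chi$ into the two conjuncts, apply the induction hypothesis to each, and recombine via the truth clause for $\land$. For implication, the left-to-right direction instantiates the quantifier over subteams at the singletons $\{w\}$ with $w\in t$; the right-to-left direction takes an arbitrary $t'\subseteq t$ with $M,t'\models\psi$, uses the induction hypothesis for $\psi$ to conclude that $\psi$ is true at every world of $t'$, applies pointwise modus ponens (since each such world is in $t$ and hence makes $\psi\to\chi$ true), and then uses the induction hypothesis for $\chi$ to obtain $M,t'\models\chi$. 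The case deserving slightly more attention is disjunction: from $M,t\models\psi\lor\chi$ one obtains a cover $t=t'\cup t''$ with $M,t'\models\psi$ and $M,t''\models\chi$, and the induction hypothesis yields truth of $\psi$, resp.\ $\chi$, at every world of $t'$, resp.\ $t''$, so every $w\in t$ makes $\psi$ or $\chi$ true; conversely, given that every $w\in t$ makes $\psi$ or $\chi$ true, set $t'=\{w\in t\mid M,w\models\psi\}$ and $t''=\{w\in t\mid M,w\models\chi\}$, note $t=t'\cup t''$, and apply the induction hypothesis in the other direction to get $M,t'\models\psi$ and $M,t''\models\chi$, hence $M,t\models\psi\lor\chi$ (here one uses implicitly that every truth-conditional formula is supported by the empty team, so nothing goes wrong if one of the pieces is empty).

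I do not expect a genuine obstacle: the proof is a routine structural induction. The only points requiring care are invoking the induction hypothesis in \emph{both} directions for $\lor$ and $\to$, and citing Proposition~\ref{prop:truth-conditions classical} exactly where truth at a world is unfolded into a condition on the valuation or on subformulas.
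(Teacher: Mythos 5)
Your proof is correct and follows essentially the same route as the paper: a structural induction on standard formulas in which the only cases needing care are $\lor$ and $\to$, handled exactly as in the paper's detailed proof of the intuitionistic analogue (Proposition \ref{prop:truth-conditionality}), of which the classical statement is the special case $R=\mathrm{id}$. Your splitting of $t$ into $t'=\{w\in t\mid M,w\models\psi\}$ and $t''=\{w\in t\mid M,w\models\chi\}$ for the converse disjunction direction, and your appeal to the empty-team case, match the paper's argument.
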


\noindent Summing up, then, the support semantics given above is inter-definable with standard truth-conditional semantics. It is easy to see that this implies that the resulting logic---where entailment is defined as support preservation---is just classical propositional logic. Thus, what we have done so far is simply to re-implement classical propositional logic in a team semantic setting.

\subsection{Adding questions to classical propositional logic}
\label{sub:adding questions}

Now that we have given a support semantics for classical logic, the stage is set for questions to enter the picture. 

\subsubsection{Inquisitive disjunction}

We achieve this by enriching our language with a new connective $\lori$, called \emph{inquisitive disjunction}.  Thus, the full language \L\ of our logic \lqdp\ is given by the following definition.


\begin{definition}[Language \Lp]\label{def:full language}~\\
The set \L\ of formulas of \lqd\ is defined inductively as follows:%
$$\phi\;::=\;p\,|\,\bot\,|\,\phi\land\phi\,|\,\phi\lor\phi\,|\,\phi\to\phi\,|\,\phi\lori\phi$$%
\end{definition}

\noindent
The new connective $\lori$ is interpreted by means of the following support clause.
\begin{definition}[Support clause for inquisitive disjunction]~
\begin{itemize}
\item $M,t\models\phi\lori\psi\iff M,t\models\phi\,\text{ or }\,M,t\models\psi$
\end{itemize}
\end{definition}

\noindent Intuitively, we can regard a formula like $p\lori q$ as a question \emph{whether $p$ or $q$}. To support this question, the information available in the team should establish a specific one of the disjuncts. In general, when we regard a formula $\mu$ as a question, we can read the relation $t\models\mu$ as meaning that the information available in $\mu$ settles the question $\mu$. (For a more detailed discussion of how the notion of support provides a unified semantics for statements and questions, see \cite{Ciardelli:16qait}.)

Notice that if $\alpha$ is a standard formula, then the disjunction $\aa\lori\neg\aa$ captures the polar question \emph{whether or not $\alpha$}, which is supported by a team $t$ if the information in $t$ implies that $\aa$ is true, or it implies that $\aa $ is false. This observation justifies the introduction of the following abbreviation, which is standard in inquisitive semantics and which will be useful throughout the paper. 
$$?\phi:=\phi\lori\neg\phi$$

\subsubsection{Persistency and truth-conditionality}

\noindent The logical system we just defined satisfies the following two properties. 

\begin{proposition}[Persistency and empty team property]~
\begin{itemize}
\item Persistency property: if $M,t\models\phi$ and $s\subseteq t$, then $M,s\models\phi$
\item Empty team property: $M,\emptyset\models\phi$ for all $\phi\in\L$
\end{itemize}
\end{proposition}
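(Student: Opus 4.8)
The statement to prove is the Persistency and empty team property for the language $\L$ of $\lqd$.

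\textbf{Plan.} Both properties are proved by a straightforward induction on the structure of the formula $\phi$, using the support clauses from Definition~\ref{def:support classical} together with the support clause for $\lori$. I would prove the two properties simultaneously, or prove the empty team property first (it is slightly easier) and then persistency, since persistency for $\to$ does not actually need the empty team property but it does no harm to have it available.

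\textbf{Empty team property.} For the base cases: $M,\emptyset\models p$ holds because the condition ``$V(w,p)=1$ for all $w\in\emptyset$'' is vacuously true; $M,\emptyset\models\bot$ holds by definition since $\emptyset=\emptyset$. For the inductive step, $\land$ and $\lori$ are immediate from the induction hypothesis applied to the immediate subformulas (with $\emptyset=\emptyset\cup\emptyset$ handling the witnesses for $\lor$). For $\phi\to\psi$, note that the only subteam of $\emptyset$ is $\emptyset$ itself, and by the induction hypothesis $M,\emptyset\models\psi$, so the implication clause is satisfied.

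\textbf{Persistency.} Assume $M,t\models\phi$ and $s\subseteq t$; we show $M,s\models\phi$. For an atom $p$: if $V(w,p)=1$ for all $w\in t$, then a fortiori for all $w\in s\subseteq t$. For $\bot$: if $t=\emptyset$ then $s=\emptyset$. The cases $\land$ and $\lori$ follow directly from the induction hypothesis. For $\lor$: if $t=t'\cup t''$ with $M,t'\models\phi$ and $M,t''\models\psi$, put $s'=s\cap t'$ and $s''=s\cap t''$; then $s=s'\cup s''$, and since $s'\subseteq t'$ and $s''\subseteq t''$ the induction hypothesis gives $M,s'\models\phi$ and $M,s''\models\psi$, whence $M,s\models\phi\lor\psi$. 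For $\phi\to\psi$: any $s'\subseteq s$ is also a subteam of $t$, so if $M,s'\models\phi$ then $M,s'\models\psi$ follows from the assumption $M,t\models\phi\to\psi$ directly, with no appeal to the induction hypothesis needed.

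\textbf{Main obstacle.} There is essentially no obstacle here; the only case requiring a small amount of care is disjunction $\lor$, where one must produce a decomposition of the subteam $s$ from the given decomposition of $t$ --- the intersection trick $s'=s\cap t'$, $s''=s\cap t''$ is the standard move. The implication case $\to$ is handled painlessly because subteam-hood is transitive, so a subteam of $s$ is already a subteam of $t$ and the hypothesis applies without invoking the induction hypothesis at all.
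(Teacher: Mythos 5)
Your proof is correct and is exactly the straightforward structural induction that the paper has in mind (the paper itself omits the argument, deferring to the literature, and for the intuitionistic analogue simply declares it ``straightforward, by induction on $\phi$''). All the cases, including the intersection trick for $\lor$ and the observation that the $\to$ case needs no induction hypothesis because subteam-hood is transitive, are handled correctly.
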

\noindent The first claim states that the semantics is \emph{persistent}, in the sense that if a formula is supported by a team $t$, then it remains supported by any extension of $t$. The second is a semantic analogue of the usual \emph{ex falso quodlibet} principle: it states that the empty team, which represents the state of inconsistent information, supports any formula.

Unlike standard formulas, formulas which contain $\lori$ are not in general truth-conditional. For instance, the formula $?p$ is true at any world in any model, but it is only supported at teams $t$ such that the truth-value of $p$ is constant within $t$. We will refer to truth-conditional formulas as \emph{statements}, and to non-truth conditional formulas as \emph{questions} (for the conceptual motivation behind this terminology, see~\citep{Ciardelli:16qait}).

\subsubsection{Statements and questions}

\noindent Proposition \ref{prop:truth-conditionality classical} implies that for a statement, we always have a unique maximal supporting state, which coincides with the set of all worlds in which the statement is true. As an illustration, Figures \ref{fig:p}-\ref{fig:p to q} depict the maximal supporting states for some standard formulas.

By contrast, questions typically have multiple maximal supporting states, that we can think of intuitively as mirroring the alternative ways in which the question can be resolved. Figures \ref{fig:p lori q}-\ref{fig:p to ?q} depict the maximal supporting states for some questions in \lqd. Notice that while $\lori$ allows us to form questions out of statements, these questions can then be combined by means of the standard connectives. Thus, for instance, the conjunction ${?p}\land{?q}$ is a question which is only resolved when both conjuncts are resolved, while the implication $p\to{?q}$ is a question which is resolved if the question $?p$ is resolved restricted to those worlds in which $p$ is true.

\noindent 
\begin{figure}[t]
\centering
\subfigure[{$p$}]{\label{fig:p}
 \begin{tikzpicture}[>=latex,scale=0.5]

 \draw[opaque,rounded corners] (-1.8,1.8) rectangle (1.8, .2);
 \draw (1.9,1.9) rectangle (1.9,1.9); 
 \draw (-1.9,-1.9) rectangle (-1.9,-1.9); 

 \draw (-1,1) node[index gray] (yy) {$11$};
 \draw (1,1) node[index gray] (yn) {$10$};
 \draw (-1,-1) node[index gray] (ny) {$01$};
 \draw (1,-1) node[index gray] (nn) {$00$};

	\end{tikzpicture}
}
\hspace{0.04in}
\subfigure[{$\neg p$}]{\label{fig:not p}
 \begin{tikzpicture}[>=latex,scale=0.5]

 \draw[opaque,rounded corners,yshift=-2cm] (-1.8,1.8) rectangle (1.8, .2);
 \draw (1.9,1.9) rectangle (1.9,1.9); 
 \draw (-1.9,-1.9) rectangle (-1.9,-1.9); 

 \draw (-1,1) node[index gray] (yy) {$11$};
 \draw (1,1) node[index gray] (yn) {$10$};
 \draw (-1,-1) node[index gray] (ny) {$01$};
 \draw (1,-1) node[index gray] (nn) {$00$};

	\end{tikzpicture}
}
\hspace{.04in}
\subfigure[{$p\land q$}]{\label{fig:p and q}
 \begin{tikzpicture}[>=latex,scale=0.5]

 \draw[opaque,rounded corners] (-1.8,1.8) rectangle (-.2,.2);
 \draw (1.9,1.9) rectangle (1.9,1.9); 
 \draw (-1.9,-1.9) rectangle (-1.9,-1.9); 

 \draw (-1,1) node[index gray] (yy) {$11$};
 \draw (1,1) node[index gray] (yn) {$10$};
 \draw (-1,-1) node[index gray] (ny) {$01$};
 \draw (1,-1) node[index gray] (nn) {$00$};

	\end{tikzpicture}
}
\hspace{.04in}
\subfigure[{$p\lor q$}]{\label{fig:p tensor q}
 \begin{tikzpicture}[>=latex,scale=0.5]

 \draw[opaque,rounded corners,rotate=-90] (-1.8,0) -- (-1.8,1.8) -- (-.5,1.8) -- (.2,.2) -- (1.8,-.5) -- (1.8,-1.8) -- (-1.8,-1.8) -- (-1.8,0);
 \draw (1.9,1.9) rectangle (1.9,1.9); 
 \draw (-1.9,-1.9) rectangle (-1.9,-1.9); 

 \draw (-1,1) node[index gray] (yy) {$11$};
 \draw (1,1) node[index gray] (yn) {$10$};
 \draw (-1,-1) node[index gray] (ny) {$01$};
 \draw (1,-1) node[index gray] (nn) {$00$};

	\end{tikzpicture}
}
\hspace{0.04in}
\subfigure[{$p\to q$}]{\label{fig:p to q}
 \begin{tikzpicture}[>=latex,scale=0.5]

 \draw[opaque,rounded corners] (-1.8,0) -- (-1.8,1.8) -- (-.5,1.8) -- (.2,.2) -- (1.8,-.5) -- (1.8,-1.8) -- (-1.8,-1.8) -- (-1.8,0);
 \draw (1.9,1.9) rectangle (1.9,1.9); 
 \draw (-1.9,-1.9) rectangle (-1.9,-1.9); 

 \draw (-1,1) node[index gray] (yy) {$11$};
 \draw (1,1) node[index gray] (yn) {$10$};
 \draw (-1,-1) node[index gray] (ny) {$01$};
 \draw (1,-1) node[index gray] (nn) {$00$};

	\end{tikzpicture}
}
\subfigure[$p\,{\protect\disji}\,q$]{\label{fig:p lori q}
 \begin{tikzpicture}[>=latex,scale=0.5]

 \draw[opaque,rounded corners] (-1.85,1.85) rectangle (1.8, .2);
 \draw[opaque,rounded corners] (-1.75,1.75) rectangle (-.2,-1.8);
 \draw (1.9,1.9) rectangle (1.9,1.9); 
 \draw (-1.9,-1.9) rectangle (-1.9,-1.9); 

 \draw (-1,1) node[index gray] (yy) {$11$};
 \draw (1,1) node[index gray] (yn) {$10$};
 \draw (-1,-1) node[index gray] (ny) {$01$};
 \draw (1,-1) node[index gray] (nn) {$00$};

	\end{tikzpicture}
}
\subfigure[{$?p$}]{\label{fig:?p}
 \begin{tikzpicture}[>=latex,scale=0.5]

 \draw[opaque,rounded corners] (-1.8,1.8) rectangle (1.8, .2);
 \draw[opaque,rounded corners] (-1.8,-1.8) rectangle (1.8, -.2);
 \draw (1.9,1.9) rectangle (1.9,1.9); 
 \draw (-1.9,-1.9) rectangle (-1.9,-1.9); 

 \draw (-1,1) node[index gray] (yy) {$11$};
 \draw (1,1) node[index gray] (yn) {$10$};
 \draw (-1,-1) node[index gray] (ny) {$01$};
 \draw (1,-1) node[index gray] (nn) {$00$};

	\end{tikzpicture}
}
\hspace{.04in}
\subfigure[{${?q}$}]{\label{fig:?q}
 \begin{tikzpicture}[>=latex,scale=0.5]

\draw[opaque,rounded corners,rotate=90] (-1.8,1.8) rectangle (1.8, .2);
\draw[opaque,rounded corners,rotate=90] (-1.8,-.2) rectangle (1.8,-1.8);
 \draw (1.9,1.9) rectangle (1.9,1.9); 
 \draw (-1.9,-1.9) rectangle (-1.9,-1.9); 


 \draw (-1,1) node[index gray] (yy) {$11$};
 \draw (1,1) node[index gray] (yn) {$10$};
 \draw (-1,-1) node[index gray] (ny) {$01$};
 \draw (1,-1) node[index gray] (nn) {$00$};

	\end{tikzpicture}
}
\hspace{.04in}
\subfigure[{${?p}\wedge{?q}$}]{\label{fig:?p and ?q}
 \begin{tikzpicture}[>=latex,scale=0.5]

 \draw[opaque,rounded corners] (-1.8,1.8) rectangle (-.2,.2);
 \draw[opaque,rounded corners] (-1.8,-1.8) rectangle (-.2,-.2);
 \draw[opaque,rounded corners] (1.8,1.8) rectangle (.2,.2);
 \draw[opaque,rounded corners] (1.8,-1.8) rectangle (.2,-.2);
 \draw (1.9,1.9) rectangle (1.9,1.9); 
 \draw (-1.9,-1.9) rectangle (-1.9,-1.9); 

 \draw (-1,1) node[index gray] (yy) {$11$};
 \draw (1,1) node[index gray] (yn) {$10$};
 \draw (-1,-1) node[index gray] (ny) {$01$};
 \draw (1,-1) node[index gray] (nn) {$00$};

	\end{tikzpicture}
}
\hspace{.04in}
\subfigure[{$p\to{?q}$}]{\label{fig:p to ?q}
 \begin{tikzpicture}[>=latex,scale=0.5]

 \draw[opaque,rounded corners] (-1.7,0) -- (-1.7,1.8) -- (-.5,1.8) -- (.2,.2) -- (1.8,-.5) -- (1.8,-1.8) -- (-1.7,-1.8) -- (-1.7,0);
 \draw[opaque,rounded corners] (1.7,0) -- (1.7,1.8) -- (.5,1.8) -- (-.2,.2) -- (-1.8,-.5) -- (-1.8,-1.8) -- (1.7,-1.8) -- (1.7,0);
 \draw (1.9,1.9) rectangle (1.9,1.9); 
 \draw (-1.9,-1.9) rectangle (-1.9,-1.9); 

 \draw (-1,1) node[index gray] (yy) {$11$};
 \draw (1,1) node[index gray] (yn) {$10$};
 \draw (-1,-1) node[index gray] (ny) {$01$};
 \draw (1,-1) node[index gray] (nn) {$00$};

	\end{tikzpicture}
}

\caption{The maximal supporting states for some propositional formulas. In the figures, 11 stands for a world in which $p$ and $q$ are true, 10 for a world in which $p$ is true and $q$ is false, and so on.}
\label{fig:meanings classical}
\end{figure}
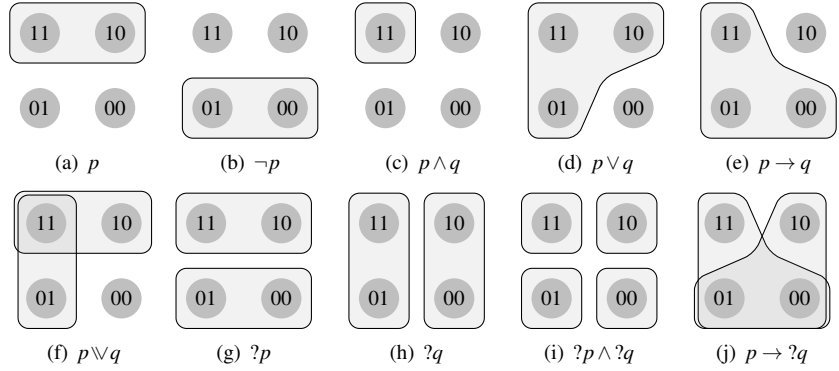

\subsubsection{Dependence formulas}
\label{sub:dependence}

\noindent Let us now turn our attention to the relation of dependency. Consider a team $t$ and consider two questions $\mu$ and $\nu$. It is natural to say that $\nu$ is \emph{determined by} $\mu$ in $t$ if, relative to the team, resolving $\mu$ implies resolving $\nu$; or, more formally, in case in any extension of $t$ where $\mu$ is resolved, so is $\nu$.
$$\mu\text{ determines }\nu\text{ in }t\iff \forall u\subseteq t: u\models\mu\text{ implies }u\models\nu$$
Notice that the condition on the right is just the support condition for $\mu\to\nu$. Thus, we have:
$$\mu\text{ determines }\nu\text{ in }t\iff t\models\mu\to\nu$$
This shows that for any two questions $\mu$ and $\nu$, we can regard the implication $\mu\to\nu$ as expressing the existence of a certain dependency relation: a team $t$ supports $\mu\to\nu$ if and only if relative to $t$, the antecedent completely determines the consequent. For this reason, we will refer to a formula of the form $\mu\to\nu$ as a \emph{dependence formula}.

For a simple example, consider the implication ${?p}\to{?q}$. This formula is supported in a team $t$ in case relative to $t$, whether $q$ is true is completely determined by whether $p$ is true. This happens if and only if within $t$ we have a functional dependency of the value of $q$ on the value of $p$:
$$M,t\models{?p}\to{?q}\iff\exists f:\{0,1\}\to\{0,1\}\text{ s.t.\ }\forall w\in t:V(w,q)=f(V(w,p))$$
Another way of stating this condition is the following:
$$M,t\models{?p}\to{?q}\iff\forall w,w'\in t:V(w,p)=V(w',p)\text{ implies }V(w,q)=V(w',q)$$
This is precisely the support condition which is assumed for the dependence atom $\dep(p,q)$ in propositional dependence logic \citep{Vaananen:08,YangVaananen:16}. Thus, in our logic we can take $\dep(p,q)$ to be an abbreviation for ${?p}\to{?q}$.

The point we just made generalizes to the case in which a question is jointly determined by a number of other questions. If $\mu_1,\dots,\mu_n,\nu$ are questions, the implication $\mu_1\land\dots\land\mu_n\to\nu$ expresses the fact that $\nu$ is jointly determined by $\mu_1,\dots,\mu_n$, that is, the fact that in the given team, $\nu$ is resolved as soon as each of $\mu_1,\dots,\mu_n$ is resolved.

In particular, if $\aa_1,\dots,\aa_n,\bb$ are standard formulas, then we can express the fact that the truth-value of $\bb$ is completely determined by those of $\aa_1,\dots,\aa_n$  by means of the formula ${?\aa_1}\land\dots\land{?\aa_n}\to {?\bb}$. We can thus take the generalized dependence atom $\dep(\aa_1,\dots,\aa_n,\bb)$ from dependence logic as an abbreviation of this formula.

At the same time, however, in \lqd\ we can express dependence patterns that do not correspond to dependence atoms (for discussion of this point, see \citep{Ciardelli:16dependency}). For instance, the formula $?p\to q\lori r$ expresses the fact that in the given team, settling the truth-value of $p$ leads to establishing one of $q$ and $r$.

%

\subsection{Resolutions and normal form}
\label{sub:normal form classical}

In Section \ref{sub:support for classical logic} we saw that any standard formula is truth-conditional. Conversely, we can show that any truth-conditional formula in $\L$ is equivalent to a standard formula. This means that, while \lqdp\ is more expressive than classical logic, it has the same expressive power as classical logic as far as statements are concerned.\footnote{This is not the case for stronger logics. In inquisitive modal logic \citep{CiardelliRoelofsen:15idel,Ciardelli:14aiml,Ciardelli:16}, the presence of questions embedded under modalities allows us to express statements that have no counterpart in a standard modal language.} To show this, we associate with any formula $\phi$ a corresponding standard formula $\phi^s$ which has the same truth-conditions.

\begin{definition}[Standard variant]~\\
The standard variant of a formula $\phi\in\L$ is the formula $\phi^s\in\L_!$ obtained by replacing each occurrence of $\lori$ by $\lor$.
\end{definition}

\begin{proposition}\label{prop:standard variant} For any $\phi\in\L$, $\phi$ and $\phi^s$ have the same truth-conditions.
\end{proposition}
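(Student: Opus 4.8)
The plan is to prove the proposition by a straightforward structural induction on $\phi\in\L$, after first making explicit what the truth-conditions of each connective look like. Recall that $\phi$ and $\phi^s$ having the same truth-conditions means $M,w\models\phi\iff M,w\models\phi^s$ for every model $M$ and world $w$, where $M,w\models\chi$ abbreviates $M,\{w\}\models\chi$.

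The first step is to record how the support clauses behave on a singleton team. The only subsets of $\{w\}$ are $\emptyset$ and $\{w\}$, and the only ways to write $\{w\}=t'\cup t''$ have at least one of $t',t''$ equal to $\{w\}$ and the other $\subseteq\{w\}$. Combining these observations with the empty team property (every formula is supported by $\emptyset$), we get, for arbitrary $\phi,\psi\in\L$: $M,w\models\phi\land\psi$ iff $M,w\models\phi$ and $M,w\models\psi$; $M,w\models\phi\to\psi$ iff $M,w\models\phi$ implies $M,w\models\psi$; $M,w\models\phi\lor\psi$ iff $M,w\models\phi$ or $M,w\models\psi$; and, directly from its clause, $M,w\models\phi\lori\psi$ iff $M,w\models\phi$ or $M,w\models\psi$. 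This is essentially the content of Proposition~\ref{prop:truth-conditions classical}, except that we now need it for all of $\L$ rather than just for standard formulas; the same short arguments apply verbatim. The crucial point visible here is that $\lor$ and $\lori$ induce exactly the same truth-condition at a world, and this is what makes the proposition go through.

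The second step is the induction. The base cases $\phi=p$ and $\phi=\bot$ are trivial, since then $\phi^s=\phi$. For the inductive steps, note that taking standard variants commutes with the connectives: $(\psi\land\chi)^s=\psi^s\land\chi^s$, $(\psi\lor\chi)^s=\psi^s\lor\chi^s$, $(\psi\to\chi)^s=\psi^s\to\chi^s$, and $(\psi\lori\chi)^s=\psi^s\lor\chi^s$. In each case we combine the singleton truth-condition from the first step with the induction hypotheses $M,w\models\psi\iff M,w\models\psi^s$ and $M,w\models\chi\iff M,w\models\chi^s$. For $\land$, $\lor$, and $\to$ this is immediate. For the $\lori$ case: $M,w\models\psi\lori\chi$ iff $M,w\models\psi$ or $M,w\models\chi$ iff (by the induction hypotheses) $M,w\models\psi^s$ or $M,w\models\chi^s$ iff $M,w\models\psi^s\lor\chi^s$, where the last equivalence is the truth-condition for $\lor$ established above; and $\psi^s\lor\chi^s$ is precisely $(\psi\lori\chi)^s$.

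I do not expect a genuine obstacle here: once one isolates the fact that standard and inquisitive disjunction collapse to the same notion at singleton teams, the rest is bookkeeping. The only place that requires a moment's care is the verification of the singleton truth-conditions for $\lor$ and $\to$, which is exactly where the empty team property is invoked; the remainder is a routine structural induction.
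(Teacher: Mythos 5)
Your proof is correct: the singleton-team computation (using the empty team property to dispose of the $\emptyset$ cases in the clauses for $\lor$ and $\to$) shows that $\lor$ and $\lori$ induce identical truth-conditions at a world, and the structural induction then goes through exactly as you describe, including your correct observation that the singleton analysis must be carried out for arbitrary formulas of $\L$ rather than only standard ones. The paper states this proposition without proof, deferring to \cite{Ciardelli:16dependency}, and the argument there is essentially this same routine induction, so there is nothing to flag.
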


\noindent If $\phi$ is itself truth-conditional, then this guarantees that $\phi$ and $\phi^s$ are equivalent. Clearly, the opposite holds as well.

\begin{proposition} $\phi$ is truth-conditional $\iff\phi\equiv\phi^s$
\end{proposition}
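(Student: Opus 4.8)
The plan is to prove both directions by directly unwinding the definition of truth-conditionality and chaining it with the two facts already available: that every standard formula is truth-conditional (Proposition~\ref{prop:truth-conditionality classical}), and that $\phi$ and its standard variant $\phi^s$ agree on truth at worlds (Proposition~\ref{prop:standard variant}). Here $\phi\equiv\phi^s$ is read, as usual, as support-equivalence: $M,t\models\phi\iff M,t\models\phi^s$ for every model $M$ and team $t$.

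For the direction from left to right, I would assume $\phi$ is truth-conditional and compute, for an arbitrary $M$ and $t$, the chain
$$M,t\models\phi \;\iff\; \forall w\in t:\, M,w\models\phi \;\iff\; \forall w\in t:\, M,w\models\phi^s \;\iff\; M,t\models\phi^s,$$
where the first equivalence is truth-conditionality of $\phi$, the second is Proposition~\ref{prop:standard variant} applied at each singleton $\{w\}$, and the third is truth-conditionality of the standard formula $\phi^s$, which holds by Proposition~\ref{prop:truth-conditionality classical}. Since $M$ and $t$ were arbitrary, this gives $\phi\equiv\phi^s$.

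For the converse, I would assume $\phi\equiv\phi^s$. Then in particular $M,\{w\}\models\phi\iff M,\{w\}\models\phi^s$ for every world $w$, i.e.\ $\phi$ and $\phi^s$ have the same truth-conditions (this also follows from Proposition~\ref{prop:standard variant} directly, so the assumption is only needed at the level of teams). Now for arbitrary $M,t$:
$$M,t\models\phi \;\iff\; M,t\models\phi^s \;\iff\; \forall w\in t:\, M,w\models\phi^s \;\iff\; \forall w\in t:\, M,w\models\phi,$$
using the hypothesis, then truth-conditionality of $\phi^s$, then the agreement of truth-conditions. Hence $\phi$ is truth-conditional.

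There is no real obstacle here; the only point requiring a moment's care is keeping the two notions of ``equivalence'' apart — support-equivalence versus mere agreement of truth-conditions — and observing that for a truth-conditional formula the latter already forces the former, which is exactly what makes the biconditional go through cleanly. Everything else is a routine substitution of previously established equivalences.
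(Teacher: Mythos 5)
Your proof is correct and is essentially the argument the paper itself intends: it chains truth-conditionality with Proposition~\ref{prop:standard variant} (agreement of truth-conditions between $\phi$ and $\phi^s$) and the truth-conditionality of the standard formula $\phi^s$ from Proposition~\ref{prop:truth-conditionality classical}, which is exactly the one-sentence justification the paper gives before deferring to the cited reference. Nothing is missing.
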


\noindent
Thus, if $\phi$ is truth-conditional, then $\phi$ is equivalent with a standard formula.  
If $\phi$ is a question, then obviously $\phi$ is not equivalent to any standard formula. Nevertheless, any formula of \lqd\ is equivalent to an \emph{inquisitive disjunction} of standard formulas. This means that the inquisitiveness which is present in a formula can always be brought all the way to the surface syntactic layer. The first step for this is to associate with any formula $\phi$ a finite set of standard formulas, called \emph{resolutions} of $\phi$.

\begin{definition}[Resolutions]\label{def:resolutions}~\\
The set $\R(\phi)$ of resolutions of a formula $\phi$ is defined inductively as follows.
\begin{itemize}
\item $\R(p)=\{p\}$
\item $\R(\bot)=\{\bot\}$
\item $\R(\phi\land\psi)=\{\aa\land\bb\,|\,\aa\in\R(\phi)\text{ and }\bb\in\R(\psi)\}$
\item $\R(\phi\lor\psi)=\{\aa\lor\bb\,|\,\aa\in\R(\phi)\text{ and }\bb\in\R(\psi)\}$
\item $\R(\phi\to\psi)=\{\bigwedge_{\aa\in\R(\phi)}\aa\to f(\aa)\,|\,f:\R(\phi)\to\R(\psi)\}$
\item $\R(\phi\lori\psi)=\R(\phi)\cup\R(\psi)$
\end{itemize}
\end{definition}

\noindent
It is then possible to prove by induction that any formula is equivalent with the inquisitive disjunction of its resolutions.

\begin{theorem}[Inquisitive normal form]\label{theor:normal form cl}~\\
If $\phi\in\L$ and $\R(\phi)=\{\aa_1,\dots,\aa_n\}$ we have $\phi\equiv\aa_1\lori\dots\lori\aa_n$.
\end{theorem}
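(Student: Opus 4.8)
The plan is a structural induction on $\phi$, with the substance isolated in a handful of distributive laws for $\lori$ that I would establish first. Two preliminary facts are worth recording: $\R(\phi)$ is always a finite \emph{non-empty} set --- each clause of Definition~\ref{def:resolutions} builds a non-empty set out of non-empty sets, and in the implication clause there is at least one map $f:\R(\phi)\to\R(\psi)$ since $\R(\psi)\neq\emptyset$ --- so the formula $\aa_1\lori\dots\lori\aa_n$ appearing in the statement is well formed; and every element of $\R(\phi)$ lies in $\L_!$, a trivial induction since no clause of the definition ever introduces $\lori$, hence every resolution is truth-conditional by Proposition~\ref{prop:truth-conditionality classical}. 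The base cases $\phi=p$ and $\phi=\bot$ are immediate from $\R(p)=\{p\}$ and $\R(\bot)=\{\bot\}$.

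Next I would prove, directly from Definition~\ref{def:support classical} and the support clause for $\lori$, the following equivalences, for arbitrary $\mu,\nu,\chi\in\L$ and arbitrary truth-conditional $\aa$: (i)~$(\mu\lori\nu)\land\chi\equiv(\mu\land\chi)\lori(\nu\land\chi)$; (ii)~$(\mu\lori\nu)\lor\chi\equiv(\mu\lor\chi)\lori(\nu\lor\chi)$; (iii)~$(\mu\lori\nu)\to\chi\equiv(\mu\to\chi)\land(\nu\to\chi)$; (iv)~$\aa\to(\mu\lori\nu)\equiv(\aa\to\mu)\lori(\aa\to\nu)$; together with the facts that $\lori$ and $\land$ are associative, commutative and idempotent up to $\equiv$, and that $\equiv$ is a congruence (both immediate from the compositional form of the support clauses). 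Laws (i)--(iii) and the right-to-left half of (iv) are one-line unfoldings of the support conditions. The left-to-right half of (iv) is the only step that genuinely uses truth-conditionality: given $t\models\aa\to(\mu\lori\nu)$, suppose $t\not\models\aa\to\mu$ and $t\not\models\aa\to\nu$ and pick $t_1,t_2\subseteq t$ witnessing these failures; then $t_1\cup t_2\models\aa$ since $\aa$ is truth-conditional, so $t_1\cup t_2\models\mu\lori\nu$, and by persistency $t_1\models\mu$ or $t_2\models\nu$, contradicting the choice of $t_1,t_2$. Iterating (iv) yields $\aa\to(\gamma_1\lori\dots\lori\gamma_l)\equiv(\aa\to\gamma_1)\lori\dots\lori(\aa\to\gamma_l)$ for any $l\geq 1$, and iterating (i) together with commutativity lets one push a finite conjunction of inquisitive disjunctions of standard formulas out to a single inquisitive disjunction of conjunctions.

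For the inductive step, assume the theorem holds for $\phi$ and $\psi$, say $\phi\equiv\aa_1\lori\dots\lori\aa_k$ with $\R(\phi)=\{\aa_1,\dots,\aa_k\}$ and $\psi\equiv\bb_1\lori\dots\lori\bb_l$ with $\R(\psi)=\{\bb_1,\dots,\bb_l\}$. For $\phi\land\psi$: repeated use of (i) and commutativity rewrites $\phi\land\psi$ as the inquisitive disjunction over $\{\aa_i\land\bb_j\}=\R(\phi\land\psi)$. For $\phi\lor\psi$: identical, with (ii). For $\phi\lori\psi$: associativity and commutativity of $\lori$ rewrite it as the inquisitive disjunction over $\R(\phi)\cup\R(\psi)=\R(\phi\lori\psi)$. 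For $\phi\to\psi$: (iii) iterated gives $\phi\to\psi\equiv\bigwedge_{i}(\aa_i\to\psi)$; each $\aa_i$ is a standard formula, hence truth-conditional, so (iv) iterated gives $\aa_i\to\psi\equiv(\aa_i\to\bb_1)\lori\dots\lori(\aa_i\to\bb_l)$; distributing the conjunction over these inquisitive disjunctions via (i) turns $\phi\to\psi$ into the inquisitive disjunction of all formulas $\bigwedge_{i}(\aa_i\to f(\aa_i))$ for $f:\R(\phi)\to\R(\psi)$, i.e.\ the inquisitive disjunction over $\R(\phi\to\psi)$. This completes the induction.

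The single real obstacle is law (iv), and specifically its left-to-right direction, which is the only point where the classical, truth-conditional behaviour of standard formulas is invoked; everything else is bookkeeping with associativity, commutativity, idempotence and distributivity. It is therefore exactly this step one should expect to need care when the argument is transplanted to the intuitionistic setting later in the paper.
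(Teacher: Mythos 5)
Your proof is correct and follows essentially the same route the paper takes for the intuitionistic analogue (Theorem \ref{theor:normal form}): structural induction driven by the distributivity of $\land$, $\lor$, and the antecedent/consequent of $\to$ over $\lori$, with the split property for truth-conditional antecedents as the one non-trivial ingredient in the implication case. Your closing remark is also exactly the paper's point: the split law is the step whose formulation must be chosen carefully (truth-conditional rather than negated antecedents) for the argument to survive the move to the intuitionistic setting.
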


\noindent
This is a very strong normal form result, from which many features of the logic can be deduced. 
It turns out that this result is quite stable with respect to the logic of statements that we choose as a starting point for our construction: in Section \ref{sub:normal form} we will prove that exactly the same normal form result holds in the intuitionistic setting.

\subsection{The logic}
\label{sub:logic classical}

Standardly, semantics is given in terms of truth, and logical entailment is characterized as truth preservation. In team-based logics, semantics is given in terms of support, and logical entailment is defined as support preservation.

\begin{definition}[Entailment]~\\
$\Phi\models_{\lqdp}\psi\iff\text{for any }M,t,\text{ if }M,t\models\phi\text{ for all }\phi\in\Phi,\text{ then }{M,t\models\psi}$
\end{definition}

\noindent 
As mentioned already in Section \ref{sub:support for classical logic}, the system \lqd\ is a conservative extension of classical propositional logic.

\begin{proposition}[Conservativity over classical logic]~\\
If $\Phi\cup\{\psi\}\subseteq\L_!$, then $\Phi\models_{\lqdp}\psi\iff \Phi\models_{\textsf{CPL}}\psi$.
\end{proposition}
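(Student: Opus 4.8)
The plan is to reduce both directions to the two facts already established for standard formulas: that truth at a world in a possible world model coincides with classical satisfaction by the valuation carried by that world (Proposition \ref{prop:truth-conditions classical}), and that every standard formula is truth-conditional, so that support at a team reduces to truth at each of its worlds (Proposition \ref{prop:truth-conditionality classical}). Together these say that, restricted to $\L_!$, the support semantics is merely a repackaging of classical truth-conditional semantics, and conservativity should fall out directly.

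For the direction from $\Phi\models_{\textsf{CPL}}\psi$ to $\Phi\models_{\lqdp}\psi$, I would take an arbitrary model $M=\langle W,V\rangle$ and team $t$ with $M,t\models\phi$ for every $\phi\in\Phi$. Fix $w\in t$. By Proposition \ref{prop:truth-conditionality classical}, $M,w\models\phi$ for every $\phi\in\Phi$, and by Proposition \ref{prop:truth-conditions classical} this is precisely to say that the classical valuation $v_w := V(w,\cdot)$ classically satisfies every formula of $\Phi$. The assumed classical entailment then gives that $v_w$ satisfies $\psi$, i.e.\ $M,w\models\psi$. Since $w\in t$ was arbitrary, Proposition \ref{prop:truth-conditionality classical} applied to $\psi$ yields $M,t\models\psi$. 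Hence $\Phi\models_{\lqdp}\psi$.

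For the converse, I would start from an arbitrary classical valuation $v:\P\to\{0,1\}$ satisfying every formula of $\Phi$ and manufacture a witnessing model: let $M_v=\langle\{w\},V\rangle$ be the one-world model with $V(w,p)=v(p)$ for all $p\in\P$. By Proposition \ref{prop:truth-conditions classical}, $M_v,w\models\chi$ iff $v$ classically satisfies $\chi$, for every $\chi\in\L_!$; in particular $M_v,\{w\}\models\phi$ for all $\phi\in\Phi$. From $\Phi\models_{\lqdp}\psi$ we obtain $M_v,\{w\}\models\psi$, i.e.\ $M_v,w\models\psi$, i.e.\ $v$ satisfies $\psi$. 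As $v$ was arbitrary, $\Phi\models_{\textsf{CPL}}\psi$.

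There is really no hard step: the content of the proposition lies entirely in Propositions \ref{prop:truth-conditions classical} and \ref{prop:truth-conditionality classical}. The only points needing a moment's care are (i) checking that the one-world models range over all classical valuations, so that the converse direction really covers every classical countermodel, and (ii) dealing with the empty team in the first direction, which is immediate since the universal quantification over worlds is then vacuous (equivalently, by the empty team property).
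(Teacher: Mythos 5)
Your proof is correct and follows exactly the route the paper intends: the paper omits the proof of this proposition (deferring to the cited reference) but explicitly notes that it follows from the inter-definability of support and truth for standard formulas, i.e.\ from Propositions \ref{prop:truth-conditions classical} and \ref{prop:truth-conditionality classical}, which is precisely your reduction. Your argument also mirrors, step for step, the proof the paper does write out for the intuitionistic analogue (Theorem \ref{conservativity_ipl}), and your side remarks about the empty team and the coverage of all classical valuations by one-world models are the right points to check.
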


\noindent Essentially, this says that we have preserved our classical logic of statements. However, now it is not only statements, but also questions that can take part in the relation of entailment. This allows us to capture as cases of entailment some interesting logical notions besides the standard consequence relation between statements. For instance, as discussed in \cite{Ciardelli:16qait,Ciardelli:16}, an entailment $\aa\models\mu$, where $\aa$ is a statement and $\mu$ is a question, holds if and only if $\aa$ provides sufficient information to resolve $\mu$. Thus, for instance, we have $p\models {?(p\lor q)}$, but $\neg p\not\models {?(p\lor q)}$.

Even more interestingly, an entailment $\Gamma,\Lambda\models\mu$, where $\Gamma$ is a set of statements, $\Lambda$ is a set of questions, and $\mu$ is a question, holds if and only if given the statements in $\Gamma$, the question $\mu$ is completely determined by the questions in $\Lambda$. Thus, for instance, the entailment 
$p\leftrightarrow q,\;{?p}\,\models\, {?q}$ 
 captures the fact that, given the assumption $p\leftrightarrow q$, the question $?q$ is completely determined by the question $?p$.

Having briefly discussed the significance of entailment in \lqd, let us now turn to the formal properties of this relation. First, classical logical laws do not generally extend from statements to questions. In particular, the double negation law is valid for statements, but not for questions. 

\begin{proposition}[Double negation and truth-conditionality]\label{prop:double negation}~\\
For any formula $\phi\in\L$, $\phi\equiv\neg\neg\phi\iff \phi$ is truth-conditional.
\end{proposition}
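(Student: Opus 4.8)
The plan is to prove both directions of the biconditional. For the ($\Leftarrow$) direction, suppose $\phi$ is truth-conditional. By Proposition \ref{prop:truth-conditionality classical}'s companion results, $\phi$ is equivalent to a standard formula $\phi^s$, and standard formulas are truth-conditional. So it suffices to show that $\neg\neg\phi$ is truth-conditional whenever $\phi$ is, and that $\phi\equiv\neg\neg\phi$ in that case. Both follow quickly from the semantics: $\neg\psi$ is always truth-conditional (the support clause for $\neg$ quantifies over all subteams, and one checks directly that $M,t\models\neg\psi$ iff $M,\{w\}\models\neg\psi$ for all $w\in t$), hence so is $\neg\neg\phi$. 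For the equivalence, since $\neg\neg\phi$ is truth-conditional it is determined by its truth-conditions, and by Proposition \ref{prop:truth-conditions classical} (applied to the standard formula tracking $\phi$'s truth-conditions, or directly) a world satisfies $\neg\neg\phi$ iff it satisfies $\phi$ — classical logic at single worlds validates double negation elimination. Combined with truth-conditionality of $\phi$, this gives $M,t\models\phi\iff\forall w\in t\colon M,w\models\phi\iff\forall w\in t\colon M,w\models\neg\neg\phi\iff M,t\models\neg\neg\phi$.

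For the ($\Rightarrow$) direction, suppose $\phi\equiv\neg\neg\phi$. Since $\neg\neg\phi$ is a negation, it is truth-conditional by the observation above, and truth-conditionality transfers across equivalence, so $\phi$ is truth-conditional. This direction is essentially immediate once the key lemma — that every formula of the form $\neg\psi$ is truth-conditional — is in hand.

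So the real content is the single lemma: $M,t\models\neg\psi\iff M,w\models\neg\psi$ for every $w\in t$. The ($\Rightarrow$) half is just the persistency property (Proposition on persistency), since $\{w\}\subseteq t$. For ($\Leftarrow$), unfold: $M,w\models\neg\psi$ means no nonempty subset of $\{w\}$ supports $\psi$, i.e. $\{w\}\not\models\psi$. We must show that if $\{w\}\not\models\psi$ for every $w\in t$, then no nonempty $t'\subseteq t$ supports $\psi$. Suppose for contradiction $\emptyset\neq t'\subseteq t$ and $M,t'\models\psi$; pick any $w\in t'\subseteq t$; by persistency $M,\{w\}\models\psi$, contradicting the hypothesis. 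This settles the lemma.

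I expect the only subtlety — and it is minor — to be making sure the $\Leftarrow$ direction's equivalence step is phrased correctly: one should either invoke the already-established chain "$\phi$ truth-conditional $\Rightarrow\phi\equiv\phi^s$" together with "$\neg\neg\phi^s\equiv\phi^s$ in \cpl" lifted through Proposition \ref{prop:standard variant}, or argue directly at the level of worlds using Proposition \ref{prop:truth-conditions classical}. Either route is routine; there is no genuine obstacle, as all the machinery (persistency, truth-conditionality of standard formulas, standard variants) is already available in the excerpt.
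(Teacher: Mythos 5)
Your proof is correct; the paper itself does not prove Proposition~\ref{prop:double negation} but defers to \cite{Ciardelli:16dependency}, and your argument---negated formulas are always truth-conditional, $\phi$ and $\neg\neg\phi$ agree at single worlds in the classical setting, hence the equivalence holds exactly when $\phi$ is truth-conditional---is the standard route taken there. There are no gaps; the only cosmetic point is that the detour through $\phi^s$ in your ($\Leftarrow$) direction is unnecessary, since the direct world-level argument you also supply already does the work.
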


\noindent This result shows, in particular, that the set of valid formulas in \lqd\ is not closed under uniform substitution: while $\neg\neg p\to p$ is valid in \lqd, because $p$ is truth-conditional, substituting $p$ with a question, say with $?p$, yields an invalid formula. Conceptually, the reason why uniform substitution fails in \lqd\ is that, in this system, atoms are not supposed to stand for arbitrary sentences. Rather, they are supposed to stand for arbitrary \emph{statements}; as such, they may validate logical principles which are generally valid for statements, but which fail for questions. It is important to understand that this feature of \lqd\ is not an accident, but a deliberate architectural choice (for a discussion of the benefits of this choice, see \S2.5.5 of \cite{Ciardelli:16}).

The fact that the double negation law fails for questions also shows that questions have constructive logical features. Another indication of this comes from the following proposition, which establishes a generalized version of the disjunction property for $\lori$. Intuitively, what this proposition says is that if a set of statements logically resolves a question, then it must entail a specific answer to it.

\begin{proposition}[Split property]\label{prop:split classical}~\\
If $\Gamma$ is a set of truth-conditional formulas, $\Gamma\models\phi\lori\psi$ implies $\Gamma\models\phi$ or $\Gamma\models\psi$.
\end{proposition}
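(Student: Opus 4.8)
The plan is to establish the contrapositive of the disjunction in the conclusion: assuming $\Gamma\models\phi\lori\psi$, I will show that if $\Gamma\not\models\phi$ then $\Gamma\models\psi$. Since $\Gamma\not\models\phi$, fix a model $M_0$ and a team $t_0$ with $M_0,t_0\models\gamma$ for every $\gamma\in\Gamma$ but $M_0,t_0\not\models\phi$. Now take an arbitrary model $M$ and team $t$ with $M,t\models\gamma$ for all $\gamma\in\Gamma$; the goal is to deduce $M,t\models\psi$.

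The key construction is to pass to the disjoint union $M^*=M\sqcup M_0$, whose worlds are those of $M$ together with those of $M_0$ (kept disjoint), each retaining its original valuation. Regarding $t$ and $t_0$ as subsets of the worlds of $M^*$, let $t^*=t\cup t_0$. The one lemma that needs checking is a \emph{locality} statement: for every formula $\chi\in\L$ and every team $s$ contained in the $M$-part of $M^*$, $M^*,s\models\chi\iff M,s\models\chi$, and symmetrically with $M_0$ in place of $M$. This is a routine induction on $\chi$: the clauses for atoms and $\bot$ are visibly local, and the clauses for $\land,\lor,\to,\lori$ only quantify over subteams of $s$, which again lie entirely in the $M$-part, so the induction hypothesis applies to each.

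With the locality lemma in hand, I verify two things. First, $M^*,t^*\models\gamma$ for every $\gamma\in\Gamma$: since $\gamma$ is truth-conditional it suffices to check $M^*,w\models\gamma$ for each $w\in t^*$; for $w\in t$ this reduces by locality to $M,w\models\gamma$, which holds because $M,t\models\gamma$ and $\gamma$ is truth-conditional, and symmetrically for $w\in t_0$. Second, from $\Gamma\models\phi\lori\psi$ we get $M^*,t^*\models\phi\lori\psi$, so by the support clause for $\lori$ either $M^*,t^*\models\phi$ or $M^*,t^*\models\psi$. In the first case persistency gives $M^*,t_0\models\phi$ (as $t_0\subseteq t^*$), and locality yields $M_0,t_0\models\phi$, contradicting the choice of $(M_0,t_0)$. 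Hence $M^*,t^*\models\psi$; by persistency $M^*,t\models\psi$, and locality gives $M,t\models\psi$, as required.

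I expect the only real subtlety to be the locality lemma together with the observation that it is precisely the truth-conditionality of the formulas in $\Gamma$ that makes the split $t^*=t\cup t_0$ harmless for them, whereas $\lori$ is sensitive to that split — this asymmetry is the whole point, and it is why the hypothesis on $\Gamma$ cannot be dropped. Everything else is bookkeeping; in particular no appeal to the normal form theorem is needed, although one could alternatively deduce the statement from Theorem \ref{theor:normal form cl} by rewriting $\phi\lori\psi$ as an $\lori$ of standard formulas and unwinding the definition of entailment.
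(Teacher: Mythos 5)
Your proof is correct and takes essentially the same route as the paper, which defers this classical case to the cited reference but proves the intuitionistic analogue (Theorem \ref{theor:split}) by exactly your construction: a disjoint union of models, a restriction/locality lemma (cf.\ Proposition \ref{prop:locality}), persistency, and the truth-conditionality of $\Gamma$ to see that the glued team still supports $\Gamma$. The only cosmetic difference is that the paper glues two counter-models (one for $\phi$, one for $\psi$) to refute $\Gamma\models\phi\lori\psi$ directly, while you glue a single counter-model for $\phi$ onto an arbitrary model of $\Gamma$ to conclude $\Gamma\models\psi$ --- an interchangeable repackaging of the same contraposition.
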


\noindent
In fact, a language-internal version of this property holds as well.

\begin{proposition}[Internal split property]\label{prop:internal split classical}~\\
If $\alpha$ is a truth-conditional formula, then $\alpha\to(\phi\lori\psi)\models(\aa\to\phi)\lori(\aa\to\psi)$
\end{proposition}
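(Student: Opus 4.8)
The plan is to prove the internal split property semantically, by unfolding the support clause for implication and using the fact that $\alpha$, being truth-conditional, is supported by a team exactly when it is true at each world. So fix a model $M$ and a team $t$ with $M,t\models\alpha\to(\phi\lori\psi)$; I must show $M,t\models(\alpha\to\phi)\lori(\alpha\to\psi)$, i.e.\ $M,t\models\alpha\to\phi$ or $M,t\models\alpha\to\psi$. Consider the subteam $t_\alpha:=\{w\in t\mid M,w\models\alpha\}$. Since $\alpha$ is truth-conditional, $M,t_\alpha\models\alpha$, and $t_\alpha\subseteq t$, so by the hypothesis $M,t_\alpha\models\phi\lori\psi$; hence $M,t_\alpha\models\phi$ or $M,t_\alpha\models\psi$. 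Say $M,t_\alpha\models\phi$ (the other case is symmetric).

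The key step is then to check that $M,t\models\alpha\to\phi$. Take any $t'\subseteq t$ with $M,t'\models\alpha$. Again by truth-conditionality of $\alpha$, every world in $t'$ satisfies $\alpha$, so $t'\subseteq t_\alpha$. By persistency (the Persistency property established earlier) applied to $M,t_\alpha\models\phi$, we get $M,t'\models\phi$. Since $t'$ was an arbitrary $\alpha$-supporting subteam of $t$, this is exactly $M,t\models\alpha\to\phi$, and therefore $M,t\models(\alpha\to\phi)\lori(\alpha\to\psi)$.

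I expect the main (mild) obstacle to be making precise why $t'\subseteq t_\alpha$: this uses truth-conditionality in the direction "support at $t'$ implies truth at each world of $t'$", together with the definition of $t_\alpha$ as containing \emph{all} worlds of $t$ at which $\alpha$ is true. One should be slightly careful that $t_\alpha$ is taken relative to $t$, not to the whole model, so that $t_\alpha\subseteq t$ holds and the hypothesis $M,t\models\alpha\to(\phi\lori\psi)$ can be applied to it. Everything else is a routine unfolding of the support clauses for $\to$ and $\lori$ plus an appeal to persistency. (Alternatively, the statement can be derived syntactically from the split property, Proposition~\ref{prop:split classical}, via the deduction-theorem behaviour of $\to$, but the direct semantic argument above is shorter and self-contained.)
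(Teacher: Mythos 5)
Your argument is correct. Note first that the paper does not actually prove this classical proposition in the text (it defers all Section 2 results to the cited literature), but it does prove the intuitionistic analogue, Theorem \ref{theor:internal split}, and that proof runs by contraposition: assuming $M,t\not\models\alpha\to\phi$ and $M,t\not\models\alpha\to\psi$, it picks witnessing teams $s_1,s_2$ with $M,s_i\models\alpha$ but $M,s_1\not\models\phi$, $M,s_2\not\models\psi$, and passes to the union $s_1\cup s_2$, using truth-conditionality of $\alpha$ to keep $\alpha$ supported there and persistency to keep both $\phi$ and $\psi$ unsupported. Your direct route via the maximal $\alpha$-subteam $t_\alpha=\{w\in t\mid M,w\models\alpha\}$ is the mirror image of this: both arguments rest on exactly the same fact, namely that the supporting subteams of a truth-conditional formula are closed under union (equivalently, have a greatest element among subteams of $t$). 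Your version has the small advantages of being direct, of isolating the reusable object $t_\alpha$, and of transferring verbatim to the intuitionistic setting by replacing subteams of $t$ with subteams of $R[t]$ throughout. One caveat: your closing parenthetical, that the result could instead be obtained from Proposition \ref{prop:split classical} ``via the deduction-theorem behaviour of $\to$,'' does not go through as stated, because that proposition requires \emph{all} premises to be truth-conditional, and the premise $\alpha\to(\phi\lori\psi)$ is not; the direct semantic argument you actually give is the right one.
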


\noindent
This validity plays an important role in the axiomatization of the logic \lqd. Figure \ref{fig:proof system lqd} shows a natural deduction system for \lqd. This system was proved to be sound and complete in \cite{Ciardelli:16dependency}, merging results obtained in \cite{CiardelliRoelofsen:11jpl} and in \cite{Yang:14}.

\begin{figure}
\begin{framed}
\begin{center}
\begin{tabular}{l r}
$\infer[(\land \mathsf{i})]{\phi\land\psi}{\phi & \psi}$ & $\qquad\infer[(\land \mathsf{e}_1)]{\phantom{\psi}\phi\phantom{\psi}}{\phi\land\psi}\quad\infer[(\land \mathsf{e}_2)]{\psi}{\phi\land\psi}$\\\\\\
\infer[(\to\!\mathsf{i})]{\phi\to\psi}{\deduce{\psi}{\deduce{\vdots}{[\phi]}}} & $\infer[(\to\!\mathsf{e})]{\psi}{\phi & \phi\to\psi}$ \\\\\\
$\infer[(\lori \mathsf{i}_1)]{\phi\lori\psi}{\phi}\quad\infer[(\lori \mathsf{i}_2)]{\phi\lori\psi}{\psi}$ & 
$\infer[(\lori \mathsf{e})]{\phantom{\psi}\chi\phantom{\psi}}{\deduce{\chi}{\deduce{\vdots}{[\phi]}} & \deduce{\chi}{\deduce{\vdots}{[\psi]}} & \phi\lori\psi}$\\\\\\
$\infer[(\lor \mathsf{i}_1)]{\phi\lor\psi}{\phi}\quad\infer[(\lor \mathsf{i}_2)]{\phi\lor\psi}{\psi}\quad$ &
$\quad\infer[(\lor \mathsf{e})]{\phantom{\psi}\aa\phantom{\psi}}{\deduce{\aa}{\deduce{\vdots}{[\phi]}} & \deduce{\aa}{\deduce{\vdots}{[\psi]}} & \phi\lor\psi}$\\\\\\
$\infer[(\lor \small\textsf{a})]{(\phi\lor\psi)\lor\chi}{\phi\lor(\psi\lor\chi)}$ & $\infer[(\lor \mathsf{r})]{\phi'\lor\psi'}{\deduce{\phi'}{\deduce{\vdots}{[\phi]}} & \deduce{\psi'}{\deduce{\vdots}{[\psi]}} & \phi\lor\psi}$\\\\\\
$\infer[(\lor \small\textsf{c})]{\psi\lor\phi}{\phi\lor\psi}$   & $\infer[(\lor\small\textsf{d})]{(\phi\lor\psi)\lori(\phi\lor\chi)}{\phi\lor(\psi\lori\chi)}$\\\\\\
$\infer[(\bot \mathsf{e})]{\phi}{\bot}$ & $\infer[(\s)]{(\aa\to\phi)\lori(\aa\to\psi)}{\aa\to(\phi\lori\psi)}$\\\\\\
$\infer[(\dn)]{\aa}{\neg\neg\aa}$
\end{tabular}
\end{center}
\end{framed}
\caption{A natural deduction system for \lqd. In these rules, $\phi,\psi,$ and $\chi$ range over arbitrary formulas, while  $\aa$ ranges only over standard formulas.}
\label{fig:proof system lqd}
\end{figure}

Some comments on this system. First, notice that conjunction, implication, and the falsum constant are governed by the standard logical rules, regardless of whether they apply to statements or to questions. In particular, this tells us that we can reason with dependence formulas just as we normally reason with implications in logic; for instance, we can try to prove that a dependency $\mu\to\nu$ holds by assuming $\mu$ and trying to derive $\nu$.  Second, notice that inquisitive disjunction is governed by the standard rules for disjunction in a natural deduction setup. Clearly, this means that standard disjunction cannot be governed by the same rules---otherwise the two disjunctions could be generally substituted for each other. Indeed, while the standard introduction rules are still valid for $\lor$, the standard elimination rule has to be restricted to conclusions which are standard formulas. Without this constraint, the rule would not be sound: for instance, it would incorrectly allow us to derive $?p$ from the tautology $p\lor\neg p$, which is obviously unsound. However, since the elimination rule for $\lor$ only allows us to infer standard formulas, we need additional rules to be able to deduce those consequences of a disjunction $\phi\lor\psi$ which are not classical formulas. This is the role of the rules $(\lor\textsf{a}),(\lor\textsf{c}),(\lor\textsf{d}),$ which stipulate that $\lor$ is associative, commutative, and distributive over $\lori$, and of the rule $(\lor\textsf{r})$, which allows us to replace each disjunct by a consequence of it. The system also contains the rule \s, which encodes the internal split property of Proposition \ref{prop:internal split classical}.\footnote{Early work on inquisitive logic \citep{Ciardelli:09thesis,CiardelliRoelofsen:11jpl}, did not use the split scheme \textsf{S}. Rather, the split property was captured by the well-known Kreisel-Putnam scheme
$$\textsf{KP}:=(\neg\phi\to\psi\lori\chi)\to(\neg\phi\to\psi)\lori(\neg\phi\to\chi)$$
In other words, the semantic restriction to truth-conditional formulas in Proposition \ref{prop:internal split classical} was matched in the proof system by a syntactic restriction to negative formulas, whereas in the present system it is matched by a restriction to standard formulas. In the classical case, both formulations work, essentially because any truth-conditional formula is equivalent both to a standard formula and to a negation. However, for our purposes it is crucial to use \text{S}, and not \text{KP}: this is because, in the intuitionistic setting that we are going to explore, standard formulas are still representative of all truth-conditional formulas, but negations are not. Thus, the \textsf{KP} scheme, while still valid, would only capture some particular cases of the Split Property, and would not be sufficient to obtain a complete system. This illustrates a more general point: if the classical results that we are reviewing here  carry over smoothly to the intuitionistic case, this is partly due to a careful choice between multiple classically equivalent formulations of the relevant results.} Finally, the system allows us to eliminate double negations in front of standard formulas. This encodes the fact that the logic of our standard fragment is classical.\footnote{Alternatively, we could have allowed as an axiom the law of excluded middle (\textsf{LEM}), $\aa\lor\neg\aa$, for all standard formulas $\alpha$. It is well-known that adding either \textsf{LEM} or \textsf{DNE} to \ipl\  yields classical logic, and this suffices to obtain completeness for \lqd.}

\section{Questions in intuitionistic logic}
\label{sec:system}

\noindent In the previous section, we have reviewed how classical propositional logic can be  enriched with questions and with formulas expressing dependencies. 
In this section we come to the novel enterprise of this paper. We will show that a similar result can be achieved when our starting point is intuitionistic logic, rather than classical logic.\footnote{In fact, our result will be slightly stronger: we will be able to add questions and dependence formulas on top of any logic which can be characterized as the logic of a certain a class of intuitionistic Kripke models.} As in the case of classical logic, we will proceed in steps. First, in Section \ref{sub:intuitionistic support} we will provide a semantics for intuitionistic propositional logic (\ipl) which is based on the notion of support at a team, rather than on the notion of truth at a possible world. Second, in Section \ref{sub:adding questions} we will enrich \ipl\ with questions by introducing inquisitive disjunction into the picture. We will call the resulting system \ilqd, where the last \textsf{I} stands for \emph{intuitionistic}. Some important features of this system, and some interesting differences with the classical case, are discussed in Section \ref{sub:semantic features}. 
Finally, in Section \ref{sub:dependency} we show that dependence relations can be expressed in \ilqd\ as implications among questions. In the next section we will then turn to the investigation of the logic that arises from this system.

\subsection{Support semantics for intuitionistic logic}
\label{sub:intuitionistic support}

As before, our starting point is the language $\L_!$ of standard propositional logic, consisting of formulas built up from atoms and $\bot$ by means of the connectives $\land,\lor,$ and $\to$. We want to provide a semantics for this language based on the notion of support at a team. However, now we want this semantics to characterize intuitionistic logic, rather than classical logic.  For this purpose, we need to equip our space of possible world with some extra structure besides a propositional valuation. This leads us naturally to the standard notion of an intuitionistic Kripke model.

\begin{definition}[Intuitionistic Kripke models]~\\
An intuitionistic Kripke model is a triple $M=\<W,R,V\>$, where:
\begin{itemize}
\item $W$ is a set, whose elements we will call \emph{possible worlds};
\item $R\subseteq W\times W$ is a partial order, i.e., a reflexive, antisymmetric and transitive relation;
\item $V: W\times\P\to \{0,1\}$ is a function which is obeys the following condition:
\begin{description}
\item[Persistency:] if $wRw'$, then $V(w,p)=1$ implies $V(w',p)=1$.
\end{description}
\end{itemize}
If $w\in W$, we let $R[w]=\{w'\,|\,wRw'\}$. If $t\subseteq W$, we let $R[t]:=\bigcup_{w\in t}R[w]$.
\end{definition}

\noindent
In an intuitionistic Kripke model, worlds stand for partial, rather that total, states of affairs. We may think of $wRw'$ as meaning that $w'$ is a refinement of $w'$, meaning that any aspect of reality which is determined at $w$ is also determined in the same way at $w'$, and possibly some more things are determined at $w'$. This justifies the persistency condition: if $p$ is determinately true at $w$, it must remain so at any refinement of $w$. Notice that this implies that $V(w,p)=0$ should not be read as ``$p$ is determinately false at $w$'', but rather as ``$p$ fails to be determinately true at $w$''. We will come back to this point later in this section, where we will define a partialized notion of the truth-value of a formula relative to a world.


As before, a \emph{team} will simply be a set of possible worlds in our model. Moreover, like before, we will think of a team as encoding a set of information: the team $t$ stands for the information that the actual state of affairs lies within~$t$. 

\begin{definition}[Teams]~\\
A \emph{team} in a Kripke model  $M=\<W,R,V\>$ is a set of worlds $t\subseteq W$. 
\end{definition}

\noindent
However, now there are two different ways in which a team $t$ may be extended: on the one hand, one may obtain more information about which state of affairs is actual, which results in some worlds being eliminated from $t$. On the other hand, a state of affairs in $t$ may become more defined, i.e., it may be replaced by one or more $R$-successors. This gives the following definition of extensions.\footnote{With this definition, the extension relation between states is always a pre-order, but not necessarily a partial order, since two distinct states may be extensions of each other. While this feature is unproblematic semantically, one may find it somewhat counterintuitive. This can be avoided if we allow only  \emph{upwards closed} sets of worlds in the semantics. Relative to such sets, the extension relation amounts to inclusion, just as in the classical case; in particular, it is a pre-order. We will discuss this different setup in Section \ref{sub:alternative setups},  and show that it is essentially equivalent to the more liberal semantics that we develop here.}

\begin{definition}[Extensions of a team]~\\ 
Let $M=\<W,R,V\>$ be a Kripke model. A team~$t'$ is an extension of a team~$t$ if $t'\subseteq R[t]$. 
\end{definition}

%

\noindent With these notions in place, we are now ready to define the intuitionistic notion of support with respect to a team in a Kripke model.


\begin{definition}[Support semantics for intuitionistic logic]~\\
 \label{defsupportsemantics}
Let $M=\<W,R,V\>$ be a Kripke model. The relation of support between teams $t$ and formulas $\phi\in\L_!$ is defined inductively as follows:
\begin{itemize}
\item $M,t\models p\iff V(w,p)=1$ for all $w\in t$
\item $M,t\models\bot\iff t=\emptyset$
\item $M,t\models\phi\land\psi\iff M,t\models\phi$ and $M,t\models\psi$
\item $M,t\models\phi\lor\psi\iff \exists t',t''\text{ such that }t=t'\cup t''$,~$M,t'\models\phi$ and $M,t''\models\psi$
\item $M,t\models\phi\to\psi\iff\forall t'\subseteq R[t]$, $\,M,t'\models\phi$ implies $M,t'\models\psi$ 
\end{itemize}
\end{definition}
\noindent
Notice that the support clauses are exactly the same as for classical logic; only the underlying notion of extensions of a team is different: whereas in classical logic we only look at teams $t'\subseteq t$, here we look at all teams $t'\subseteq R[t]$. 

This observation can be sharpened by noticing that the classical support semantics given above can be seen as a special case of the intuitionistic support semantics we just defined: possible world models for classical logic can be identified with Kripke models $M=\<W,R,V\>$ where worlds are already complete, and no world can be a proper refinement of another, i.e., the refinement relation $R$ is the identity. Let us call these Kripke models \emph{classical}. In a classical Kripke model, $R[t]=t$, and so the conditions $t'\subseteq t$ and $t'\subseteq R[t]$ coincide. Therefore, classical support semantics coincides with intuitionistic support semantics over the class of classical models.


\begin{example}
 \label{exintteammodel}
Consider the Kripke model $M$ depicted in Figure~\ref{fig:intteammodel}. As usual, we draw the relation $R$ by means of edges going upwards, and we omit edges which are implied by transitivity and reflexivity. Thus, in $M$ we have $w_1Rv$ for all worlds $v$, while $w_2Rv\iff v=w_2$. We depict the valuation $V$ by listing besides each world the atomic sentences which are true at that world; thus, in $M$ the sentence $p$ is true only at $w_4$, while $q$ is true only at $w_5$. The rectangles in the picture represent three teams $t_1,t_2,t_3$ in this model. Since $R[t_1]=\{w_2,w_3,w_4,w_5\}$, both $t_2=\{w_2,w_4\}$ and $t_3=\{w_5\}$ are extensions of $t_1$. The team $t_3$ is a maximal consistent team, in the sense that its only proper extension is the inconsistent team $\emptyset$. We have $t_3\models q$, since the only world in $t_3$ makes $q$ true, and $t_3\models\neg p$, since $t_3$ cannot be extended to a consistent team that supports $p$. In general, a maximal consistent team like $t_3$ gives rise to a complete theory, in the sense that for any formula $\phi$, either $\phi$ or $\neg\phi$ is supported. By contrast, the team $t_2$ has both $\{w_2\}$ and $\{w_4\}$ as proper consistent extensions. In this team, neither $p$ nor $\neg p$ is supported: for although $p$ is not true at all worlds in $t_2$, $t_2$ can be extended to a team where $p$ is supported, namely, $\{w_4\}$. Notice however that $t_2\models p\lor\neg p$ is supported, since $t_2=\{w_4\}\cup\{w_2\}$, and we have $\{w_4\}\models p$ and $\{w_2\}\models\neg p$. Indeed, one can show that in any team $t$ consisting only of terminal points---i.e., in every team consisting of complete worlds---we have $t\models\alpha\lor\neg\alpha$ for every standard formula $\alpha$, even though $t$ need not support either of the disjuncts. In this respect, standard disjunction differs from the inquisitive disjunction to be introduced below, which requires either disjunct to be supported in the team. 
Finally, notice that although $t_1$ does not contain any $p$-world, $t_1\not\models\neg p$, since $t_1$ can be extended to the team $\{w_4\}$ which supports $p$. In addition, now we also have $t_1\not\models p\lor\neg p$: although $t_1$ can be subdivided into $\{w_2\}$ and $\{w_3\}$, this does not help, since $\{w_3\}$ itself supports neither $p$ nor $\neg p$. This also illustrates the fact that, unlike in \lqd, in \ilqd\ singleton teams are not typically complete.

\begin{figure}
 \centering
 \begin{tikzpicture}[scale=0.7]
  \node[scale=0.8,shape=circle,draw=black] (A) at (0,0) {$w_1$};
  \node[scale=0.8,shape=circle,draw=black] (B) at (2,2) {$w_3$};
  \node[scale=0.8,shape=circle,draw=black] (C) at (4,4) {$w_5$};
  \node[scale=0.8,shape=circle,draw=black] (D) at (0,4) {$w_4$};
  \node[scale=0.8,shape=circle,draw=black] (E) at (-2,2) {$w_2$};
  \path (A) edge (B);
  \path (B) edge (C);
  \path (B) edge (D);
  \path (A) edge (E);
  \node (I) at (.6,4)   {$p$};
  \node (J) at (4.6,4)  {$q$};
  \node at (3.1,2) {$t_1$};
  \node at (5.4,4) {$t_3$};
  \node at (-.8,4.7) {$t_2$};
  \draw[opaque,rounded corners] (-2.7,2.7) rectangle (2.7,1.3);
  \draw[opaque,rounded corners] (3.25,4.6) rectangle (5,3.4); 
  \draw[opaque,rounded corners,rotate=45] (-.8,2.1) rectangle (4,3.5);
 \end{tikzpicture}
 \caption{A Kripke model $M$ in which three teams are indicated.}
 \label{fig:intteammodel}
\end{figure}
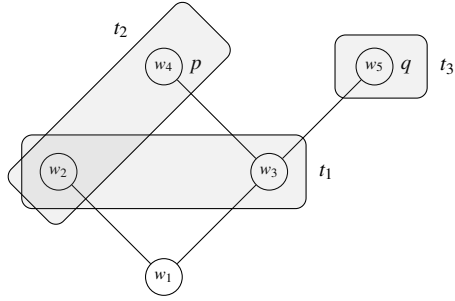
\end{example}

\noindent
From our notion of support with respect to a team we can obtain a notion of truth with respect to a possible world, in the same way as we did in the classical setting.

\begin{definition}[Truth]~\\
We say that a formula $\phi$ is true at a world $w$ in a Kripke model $M$, notation $M,w\models\phi$, in case $\phi$ is supported at the singleton state $\{w\}$:  $M,w\models\phi\iff M,\{w\}\models\phi$
\end{definition}

\noindent It will also be useful to define a notion of the truth value of a sentence with respect to a world. As we mentioned above, in the intuitionistic case, we think of possible worlds in a Kripke model as capturing partial states of affairs. This means that there will be possible worlds in which a formula is neither true nor false.

\begin{definition}[Truth-value of a formula at a world]~\\
Let $M=\<W,R,V\>$ be a Kripke model. The truth-value function associated to $M$ is the partial function $T:W\times\L_!\to\{0,1\}$ defined as follows:

$$T(w,\phi)=\left\{ 
\begin{array}{l l }
1 & \text{if }M,w\models\phi\\
0 &\text{if }M,w\models\neg\phi\\
\text{undefined} & \text{otherwise}
\end{array} \right.$$
\end{definition}

\noindent
This partial function is well-defined, since it is easy to see that $M,w\models\phi$ and $M,w\models\neg\phi$ cannot both be true. Moreover, the next proposition states that truth-values are persistent in the relation $R$.

\begin{proposition}[Persistency of truth-values]~\\
Let $M$ be a Kripke model and let $w,w'$ be two worlds in $M$ with $wRw'$. If $T(w,\phi)$ is defined, then $T(w',\phi)$ is also defined and $T(w',\phi)=T(w,\phi)$.
\end{proposition}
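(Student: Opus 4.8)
The plan is to obtain the proposition as an easy corollary of the general \emph{persistency} of intuitionistic team support, which I would establish first as a lemma: for every standard formula $\phi\in\L_!$, if $M,t\models\phi$ and $t'$ is an extension of $t$ (i.e.\ $t'\subseteq R[t]$), then $M,t'\models\phi$. This is proved by a routine induction on $\phi$. The atomic case uses the persistency condition on $V$: each $w'\in t'$ has some $R$-predecessor $w\in t$, and $V(w,p)=1$ forces $V(w',p)=1$. The $\bot$ case is immediate since $R[\emptyset]=\emptyset$. Conjunction follows directly from the induction hypothesis. For disjunction, if $t=t_1\cup t_2$ witnesses $M,t\models\phi\lor\psi$, one decomposes $t'$ as $(t'\cap R[t_1])\cup(t'\cap R[t_2])$ — this covers $t'$ because $t'\subseteq R[t]=R[t_1]\cup R[t_2]$ — and applies the induction hypothesis to the two pieces. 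The only step requiring a small observation is implication: here one notes that $R[t']\subseteq R[t]$ whenever $t'\subseteq R[t]$, which is exactly where transitivity of $R$ is used; granting this, every $s\subseteq R[t']$ also satisfies $s\subseteq R[t]$, so the universal condition defining $M,t\models\phi\to\psi$ transfers verbatim to $t'$.

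Given the lemma, the proposition follows in two lines. Suppose $wRw'$ and $T(w,\phi)$ is defined. Then $w'\in R[w]=R[\{w\}]$, so $\{w'\}$ is an extension of the singleton team $\{w\}$. If $T(w,\phi)=1$ then $M,\{w\}\models\phi$, so the lemma gives $M,\{w'\}\models\phi$, i.e.\ $T(w',\phi)=1$. If $T(w,\phi)=0$ then $M,\{w\}\models\neg\phi$; since $\neg\phi=\phi\to\bot$ is itself a standard formula, the lemma applies to it and yields $M,\{w'\}\models\neg\phi$, i.e.\ $T(w',\phi)=0$. In both cases $T(w',\phi)$ is defined and equals $T(w,\phi)$.

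The main (and essentially only) obstacle is the implication clause of the persistency lemma, since that is where the extension relation interacts non-trivially with the structure of $R$; everything else is bookkeeping. Two minor points worth flagging: one must use that $\neg\phi$ is a standard formula so that the lemma covers the ``false'' case of $T$ as well as the ``true'' case; and one should record the general persistency lemma explicitly if it has not already been stated, since it is needed elsewhere in the development anyway.
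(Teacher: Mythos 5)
Your proposal is correct and follows the same route as the paper, which also derives the proposition as a special case of the general persistency property (Proposition \ref{prop:persistency}), proved by a routine induction on $\phi$. The details you supply for the induction (in particular the decomposition of $t'$ in the disjunction case and the use of transitivity in the implication case) are all sound.
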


\begin{proof} This can be proved directly by induction, but it also follows as a special case of a more general result, namely, Proposition \ref{prop:persistency} below.
\end{proof}

\noindent
Thus, even though the basic semantic notion in our system is that of support at a team, a notion of truth at a world can be recovered straightforwardly. As in the classical case, we say that a formula is truth-conditional if support can in turn be recovered from truth, in the sense that support at a team boils down to truth at each world in the team.

\begin{definition}[Truth-conditionality]\label{truth_cond_def}~\\
We say that a formula $\phi$ is truth-conditional if for all models $M$ and teams~$t$: $M,t\models\phi\iff M,w\models\phi\text{ for all }w\in t$.
\end{definition}

\noindent
The following proposition states that all standard formulas are indeed truth-conditional, just as in the classical case.


\begin{proposition}[Standard formulas are truth-conditional]\label{prop:truth-conditionality}~\\
For any Kripke model $M$, any team $t$, and any standard formula $\phi\in\L_!$:
$$M,t\models\phi\iff M,w\models\phi\text{ for all }w\in t$$
\end{proposition}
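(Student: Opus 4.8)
The plan is to prove the statement by induction on the structure of the standard formula $\phi\in\L_!$. The key point is that the support clauses for $\bot,\land,\lor,\to$ in the intuitionistic setting are syntactically identical to those in the classical case (Definition \ref{defsupportsemantics}), so the argument will closely mirror the classical Proposition \ref{prop:truth-conditionality classical}; the only genuinely new ingredient is that extensions of a team $t$ are now sets $t'\subseteq R[t]$ rather than subsets of $t$, which matters only in the $\to$ case. Before starting the induction, I would record two facts about standard formulas that follow from the clauses themselves (or from the general Persistency proposition mentioned in the excerpt): (i) the empty team supports every formula, and (ii) support is persistent, i.e.\ if $M,t\models\phi$ and $s\subseteq R[t]$ then $M,s\models\phi$. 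Persistency in particular gives the easy direction of the $\to$ case.

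\medskip\noindent
For the induction: the atomic case $p$ and the $\bot$ case are immediate by unwinding the support clause and the truth clause (truth at $w$ is support at $\{w\}$). The $\land$ case is routine: $M,t\models\phi\land\psi$ iff $M,t\models\phi$ and $M,t\models\psi$, which by the induction hypothesis holds iff every $w\in t$ satisfies both $M,w\models\phi$ and $M,w\models\psi$, i.e.\ iff $M,w\models\phi\land\psi$ for all $w\in t$. For the $\lor$ case, the left-to-right direction is easy: if $t=t'\cup t''$ with $M,t'\models\phi$ and $M,t''\models\psi$, then by the induction hypothesis every world in $t'$ makes $\phi$ true and every world in $t''$ makes $\psi$ true, so every $w\in t$ makes $\phi\lor\psi$ true at the world level (using Proposition \ref{prop:truth-conditions classical}'s analogue, or directly: $M,w\models\phi\lor\psi$ holds by splitting $\{w\}$ trivially). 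Conversely, if $M,w\models\phi\lor\psi$ for every $w\in t$, partition $t$ into $t'=\{w\in t\mid M,w\models\phi\}$ and $t''=t\setminus t'$ (so every world in $t''$ makes $\psi$ true); by the induction hypothesis $M,t'\models\phi$ and $M,t''\models\psi$, and $t=t'\cup t''$ witnesses $M,t\models\phi\lor\psi$.

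\medskip\noindent
The $\to$ case is where the intuitionistic setting actually enters, and I expect it to be the main point requiring care. Suppose $M,t\models\phi\to\psi$; I want $M,w\models\phi\to\psi$ for each $w\in t$, i.e.\ for every $t'\subseteq R[w]$ with $M,t'\models\phi$ we need $M,t'\models\psi$. Since $w\in t$ we have $R[w]\subseteq R[t]$, so $t'\subseteq R[t]$, and the assumed support of $\phi\to\psi$ at $t$ gives $M,t'\models\psi$ directly. For the converse, suppose $M,w\models\phi\to\psi$ for every $w\in t$, and let $t'\subseteq R[t]$ with $M,t'\models\phi$; I must show $M,t'\models\psi$. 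Here I use the induction hypothesis twice: from $M,t'\models\phi$ I get $M,v\models\phi$ for every $v\in t'$; I want to conclude $M,v\models\psi$ for every such $v$, and then apply the induction hypothesis to $\psi$ to get $M,t'\models\psi$. Fix $v\in t'$. Since $t'\subseteq R[t]$, there is $w\in t$ with $wRv$. I would now argue that $M,v\models\phi\to\psi$: this follows from $M,w\models\phi\to\psi$ together with persistency of truth at a world along $R$ (the Persistency of truth-values proposition, or more simply: $\{v\}\subseteq R[w]$, so the clause for $M,w\models\phi\to\psi$, which quantifies over all $t''\subseteq R[w]$, applies to $t''=\{v\}$). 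Then from $M,v\models\phi\to\psi$ and $M,v\models\phi$—taking $t''=\{v\}\subseteq R[v]$ in the support clause for implication at $\{v\}$—we get $M,v\models\psi$. Since $v\in t'$ was arbitrary, the induction hypothesis for $\psi$ yields $M,t'\models\psi$, completing the $\to$ case and the induction. The one subtlety to double-check is that $\{v\}\subseteq R[w]$ really does license plugging $t''=\{v\}$ into the universally quantified implication clause; this is exactly where the definition of extension as $t'\subseteq R[t]$ (not $t'\subseteq t$) is doing the work, and it goes through because $wRv$ means $v\in R[w]$.
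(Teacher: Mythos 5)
Your proposal is correct and follows essentially the same route as the paper's proof: induction on the structure of $\phi$, with the $\lor$ case handled by splitting $t$ according to which disjunct is true at each world (using the empty team property to pass from truth of a disjunct to truth of the disjunction), and the $\to$ case handled by observing that $R[w]\subseteq R[t]$ for the easy direction and, for the converse, locating for each $v\in t'\subseteq R[t]$ a predecessor $w\in t$ and applying the world-level implication at $w$ to the singleton $\{v\}\subseteq R[w]$. The only cosmetic difference is that the paper applies $M,w\models\phi\to\psi$ directly to $\{v\}$ rather than first deriving $M,v\models\phi\to\psi$, and it extracts the empty-team fact for the subformulas from the induction hypothesis itself rather than citing the general persistency/empty-team proposition.
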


\begin{proof} The proposition is proved by induction on the complexity of $\phi$. We only give the two non-trivial induction steps for disjunction and implication.

Case $\phi=\psi\lor\chi$. First notice that the induction hypothesis implies that $M,\emptyset\models\psi$ and $M,\emptyset\models\chi$, since the right-hand side of the equivalence is trivially satisfied for $\emptyset$. It follows easily that $M,w\models\psi\lor\chi\iff M,w\models\psi$ or $M,w\models\chi$. So, we have:
\begin{align*}
M,t\models\psi\lor\chi\iff& \exists t',t''\text{ s.t. }t=t'\cup t'',~M,t'\models\psi\text{ and }M,t''\models\chi\\
\iff& \exists t',t''\text{ s.t. }t=t'\cup t'',~(M,w'\models\psi\text{ for all }w'\in t')\text{ and }\\
&(M,w''\models\chi\text{ for all }w''\in t'')\quad\text{(by induction hypothesis)}\\
\iff&\text{for all }w\in t,\text{ either }M,w\models\psi\text{ or }M,w\models\chi\\
\iff&\text{for all }w\in t, \;M,w\models\psi\lor\chi
\end{align*}

Case $\phi=\psi\to\chi$. Suppose $M,t\models\psi\to\chi$. For any $w\in t$, we show that $M,w\models\psi\to\chi$. Let $t'\subseteq R[w]$ be such that $M,t'\models\psi$. Since $t'\subseteq R[w]\subseteq R[t]$, we obtain by the assumption that $M,t'\models\chi$, as required.

Conversely, suppose that $M,w\models\psi\to\chi$ holds for all $w\in t$. For any $t'\subseteq R[t]$ such that $M,t'\models\psi$, we show that $M,t'\models\chi$. By induction hypothesis, we have $M,w'\models\psi$ for all $w'\in t'$. Now take any $w'\in t'$: since $t'\subseteq R[t]$, we have some $w\in t$ with $wRw'$. Since $\{w'\}\subseteq R[w]$, our assumption implies $M,w'\models\chi$. Since this holds for any $w'\in t'$ by the induction hypothesis yields $M,t'\models\chi$. So, $M,t\models\psi\to\chi$.
\end{proof}

\noindent
Using this fact, we can see that the notion of truth that our semantics determines for standard formulas is nothing but the notion which is familiar from intuitionistic Kripke semantics. 



\begin{proposition}[Kripke semantics recovered]\label{prop:intuitionistic truth-conditions}~\\ 
For any Kripke model $M$, world $w$, and standard formulas $\phi$ and $\psi$, we have:
\begin{itemize}
\item $M,w\models p\iff V(w,p)=1$
\item $M,w\not\models\bot$
\item $M,w\models\phi\land\psi\iff M,w\models\phi\text{ and }M,w\models\psi$
\item $M,w\models\phi\lor\psi\iff M,w\models\phi\text{ or }M,w\models\psi$
\item $M,w\models\phi\to\psi\iff\forall v\in R[w]$, $\,M,v\models\phi$ implies $M,v\models\psi$
\end{itemize}
\end{proposition}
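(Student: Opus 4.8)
The plan is to read off each clause directly from the support semantics (Definition~\ref{defsupportsemantics}) instantiated at the singleton team $\{w\}$, recalling that $M,w\models\phi$ is \emph{defined} as $M,\{w\}\models\phi$. For the atomic case, the support clause for $p$ at $\{w\}$ says $V(w',p)=1$ for all $w'\in\{w\}$, which is just $V(w,p)=1$. For $\bot$, the clause gives $M,\{w\}\models\bot\iff\{w\}=\emptyset$, which never holds, so $M,w\not\models\bot$. For conjunction, the support clause is already pointwise, so $M,\{w\}\models\phi\land\psi\iff M,\{w\}\models\phi$ and $M,\{w\}\models\psi$, which is exactly the claim. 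None of these three cases needs anything beyond unwinding definitions.

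The disjunction and implication cases are where Proposition~\ref{prop:truth-conditionality} (all standard formulas are truth-conditional) does the work of translating between a team and the worlds that compose it. For disjunction: any way of writing $\{w\}=t'\cup t''$ has at least one of $t',t''$ equal to $\{w\}$ and the other a subset of $\{w\}$; since $M,\emptyset\models\chi$ for every standard $\chi$ (a special case of Proposition~\ref{prop:truth-conditionality}, the right-hand side being vacuous), the existential in the support clause collapses to ``$M,\{w\}\models\phi$ or $M,\{w\}\models\psi$'' --- this is exactly the computation already carried out in the proof of Proposition~\ref{prop:truth-conditionality}, so one can simply appeal to it. For implication, unwinding gives: $M,\{w\}\models\phi\to\psi$ iff for every $t'\subseteq R[\{w\}]=R[w]$, $M,t'\models\phi$ implies $M,t'\models\psi$. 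For the forward direction, given $v\in R[w]$ with $M,v\models\phi$, apply this with $t'=\{v\}$ to obtain $M,v\models\psi$. For the converse, given $t'\subseteq R[w]$ with $M,t'\models\phi$, Proposition~\ref{prop:truth-conditionality} yields $M,v\models\phi$ for each $v\in t'$, hence $M,v\models\psi$ for each such $v$ by hypothesis, and Proposition~\ref{prop:truth-conditionality} again gives $M,t'\models\psi$; so $M,\{w\}\models\phi\to\psi$.

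There is no genuine obstacle here: the statement is essentially a corollary of the support clauses together with the truth-conditionality of standard formulas. The only point that requires a moment's care is ensuring, in the disjunction and implication cases, that one may pass freely between a team and its member worlds --- which is precisely what Proposition~\ref{prop:truth-conditionality} licenses --- together with the bookkeeping identity $R[\{w\}]=R[w]$.
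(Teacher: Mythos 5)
Your proof is correct and follows essentially the same route as the paper's: the first three clauses are read off from the support clauses at $\{w\}$, the disjunction clause is the computation already done inside the proof of Proposition~\ref{prop:truth-conditionality}, and the implication clause is handled exactly as in the paper (singleton teams $\{v\}\subseteq R[w]$ for one direction, truth-conditionality of standard formulas in both directions for the other). No gaps.
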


\begin{proof} We only give the proof for implication, which is the only case that does not follow immediately from the definition.

Suppose $M,w\models\psi\to\chi$. For any $v\in R[w]$ such that $M,v\models\psi$, since $\{v\}\subseteq R[w]$, the assumption implies $M,v\models\chi$, as required.

Conversely, suppose that for any $v\in R[w]$, $M,v\models\psi$ implies $M,v\models\chi$. Let $t\subseteq R[w]$ be such that $M,t\models\psi$. By Proposition \ref{prop:truth-conditionality}, we have $M,u\models\psi$ for all $u\in t\subseteq R[w]$. It then follows from the assumption that $M,u\models\chi$. Hence, by applying Proposition \ref{prop:truth-conditionality} again we obtain $M,t\models\chi$, as required.
\end{proof}


\noindent Summing up, then, we have seen that our team-based semantics is inter-definable with standard Kripke semantics for intuitionistic logic. As we will see in Section~\ref{sec:logic}, from this connection it follows immediately that the logic determined by our semantics is indeed intuitionistic logic. Thus, what we did so far is simply to provide an alternative semantic foundation for intuitionistic logic. From our perspective, the reason why this new foundation is interesting is that, being based on the notion of support at a team rather than on the notion of truth at a world, it allows us to introduce questions and dependence formulas into the picture. To this we turn in the next section.


\subsection{Adding questions to intuitionistic logic}
\label{sub:adding questions}

\noindent
As in the classical case, questions will enter the picture via the introduction of the inquisitive disjunction connective, $\lori$. Thus, the full language \L\ of the logic we will consider is the one given by Definition \ref{def:full language}. The semantic clause for inquisitive disjunction remains the same as in the classical case.

\begin{definition}[Support conditions for inquisitive disjunction]\label{defsupportsemantics_indisj}~
\begin{itemize}
\item $M,t\models\phi\lori\psi\iff M,t\models\phi$ or $M,t\models\psi$
\end{itemize}
\end{definition}

%

\noindent It is easy to see that formulas formed by means of $\lori$ are not in general truth-conditional. As before, we will  think of such formulas as questions. In particular, if $\alpha$ is a standard formula, we will write $?\aa$ as an abbreviation for $\aa\lori\neg\aa$.
The following proposition says that, as in the classical case, $?\aa$ captures the question of whether $\aa$ is true or false, which is settled in a team $t$ just in case all the assignment in $t$ agree about the truth-value of $\aa$. 

\begin{proposition}[Support conditions for polar questions]\label{prop:polar questions}~\\
Let $M$ be a Kripke model, $t$ a team in $M$, and $\aa\in\L_!$. We have:
$$M,t\models{?\aa}\iff \forall w,w'\in t: T(w,\aa)\deq T(w',\aa)$$
where $T(w,\aa)\deq T(w',\aa)$ means that the two values are defined and equal.
\end{proposition}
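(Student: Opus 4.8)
The plan is to unfold the abbreviation $?\aa = \aa \lori \neg\aa$ and the support clause for $\lori$, so that $M,t \models {?\aa}$ iff $M,t \models \aa$ or $M,t \models \neg\aa$. Using Proposition~\ref{prop:truth-conditionality}, $M,t\models\aa$ is equivalent to $M,w\models\aa$ for all $w\in t$, i.e.\ $T(w,\aa)=1$ for all $w\in t$; and $M,t\models\neg\aa$ is equivalent to $M,w\models\neg\aa$ for all $w\in t$, i.e.\ $T(w,\aa)=0$ for all $w\in t$ (here I use that $\neg\aa$ is itself a standard formula, hence truth-conditional, and the definition of the truth-value function $T$). So the left-hand side says: either $T(w,\aa)=1$ for every $w\in t$, or $T(w,\aa)=0$ for every $w\in t$.

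Next I would show this disjunctive statement is equivalent to the right-hand side, namely that $T(w,\aa)\deq T(w',\aa)$ for all $w,w'\in t$. For the forward direction this is immediate: if all worlds in $t$ give $\aa$ the value $1$, or all give it the value $0$, then in particular any two worlds agree and both values are defined. For the converse, suppose $T(w,\aa)\deq T(w',\aa)$ for all $w,w'\in t$. If $t=\emptyset$ the claim is trivial since $M,\emptyset\models\aa$ (empty team property / base case of truth-conditionality), so assume $t\neq\emptyset$ and fix some $w_0\in t$. By hypothesis $T(w_0,\aa)$ is defined, so it is either $1$ or $0$; since every other $w\in t$ satisfies $T(w,\aa)=T(w_0,\aa)$, either $T(w,\aa)=1$ for all $w\in t$ or $T(w,\aa)=0$ for all $w\in t$, which is exactly what we needed.

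I do not expect any real obstacle here; the only point that needs a little care is the reading of the relation $\deq$, which bundles together ``both defined'' and ``equal'' — this is precisely why the nonempty case lets us extract an actual truth-value from an arbitrary witness $w_0$, and why the empty-team case has to be handled separately (there $\deq$ holds vacuously but there is no witness world). Everything else is a direct appeal to Proposition~\ref{prop:truth-conditionality}, the definition of $T$, and the support clauses for $\lori$ and $\neg$.
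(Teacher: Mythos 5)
Your proposal is correct and follows essentially the same route as the paper: unfold $?\aa$ via the $\lori$ clause, apply Proposition~\ref{prop:truth-conditionality} to the standard formulas $\aa$ and $\neg\aa$ to reduce team support to worldwise truth, and then observe that this disjunction is exactly the statement that $T(\cdot,\aa)$ is defined and constant on $t$. Your explicit handling of the empty team and the choice of a witness $w_0$ is slightly more careful than the paper's phrasing, but it is the same argument.
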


\begin{proof} 
Suppose $M,t\models{?\aa}$. Then $M,t\models\aa$ or $M,t\models\neg\aa$, which, by Proposition \ref{prop:truth-conditionality}, implies that $M,w\models\aa$ for all $w\in t$, or $M,w\models\neg\aa$ for all $w\in t$. Clearly, in both cases $T(w,\aa)$ is defined and has a constant value for all $w\in t$.

Conversely, suppose $T(w,\aa)$ is defined and has a constant value for all $w\in t$. If $T(w,\aa)=1$ for all $w\in t$, then $M,w\models\aa$ for all $w\in t$, which by Proposition \ref{prop:truth-conditionality} implies $M,t\models\aa$, and thereby $M,t\models{?\aa}$. If $T(w,\aa)=0$ for all $w\in t$, then by the same argument we obtain  $M,t\models\neg\aa$, and thereby $M,t\models{?\aa}$ holds as well.
\end{proof}

\noindent Notice that there only is a fact of the matter about whether $\aa$ is true or false in case $\aa$ does have a truth-value in the first place. This cannot be taken for granted in our intuitionistic setting, since possible worlds can in general fail to assign a truth-value to some formulas. To know that the question $?\aa$ has a true answer, one has to know that the standard disjunction $\aa\lor\neg\aa$ is true. We may say that $\aa\lor\neg\aa$ captures the presupposition of the question $?\aa$. The formula $\aa\lor\neg\aa$ is not a tautology in our setting, which means that in the intuitionistic setting---unlike in the classical setting---polar questions have non-trivial presuppositions (for a discussion of the logical notion of presupposition of a question, see \cite{Ciardelli:16}, \S1,3).

\begin{figure}[t]
 \centering
 \subfigure[{$p$}]{\label{fig:p}
 \begin{tikzpicture}[>=latex,scale=0.75]
  \node[scale=0.8,shape=circle,draw=black] (A) at (0,0) {$w_1$};
  \node[scale=0.8,shape=circle,draw=black] (B) at (1,1) {$w_3$};
  \node[scale=0.8,shape=circle,draw=black] (C) at (2,2) {$w_5$};
  \node[scale=0.8,shape=circle,draw=black] (D) at (0,2) {$w_4$};
  \node[scale=0.8,shape=circle,draw=black] (E) at (-1,1) {$w_2$};
  \path (A) edge (B);
  \path (B) edge (C);
  \path (B) edge (D);
  \path (A) edge (E);
  \node[scale=0.8] (I) at (-.5,1) {$p$};
  \node[scale=0.8] (J) at (1.5,2) {$q$};
  \draw[opaque,rounded corners] (-1.6,1.5) rectangle (-.3,.5);
 \end{tikzpicture}
 }
 \hspace{.1cm}
 \subfigure[{$\neg p$}]{\label{fig:notp}
 \begin{tikzpicture}[>=latex,scale=0.75]
  \node[scale=0.8,shape=circle,draw=black] (A) at (0,0) {$w_1$};
  \node[scale=0.8,shape=circle,draw=black] (B) at (1,1) {$w_3$};
  \node[scale=0.8,shape=circle,draw=black] (C) at (2,2) {$w_5$};
  \node[scale=0.8,shape=circle,draw=black] (D) at (0,2) {$w_4$};
  \node[scale=0.8,shape=circle,draw=black] (E) at (-1,1) {$w_2$};
  \path (A) edge (B);
  \path (B) edge (C);
  \path (B) edge (D);
  \path (A) edge (E);
  \node[scale=0.8] (I) at (-.5,1) {$p$};
  \node[scale=0.8] (J) at (1.5,2) {$q$};
  \draw[opaque,rounded corners=3mm] (1,.15) -- (-.95,2.45) -- (2.95,2.45) -- cycle; \end{tikzpicture}
 }
   \hspace{.1cm}
 \subfigure[{$p\vee\neg p$}]{\label{fig:lemp}
 \begin{tikzpicture}[>=latex,scale=0.75]
  \node[scale=0.8,shape=circle,draw=black] (A) at (0,0) {$w_1$};
  \node[scale=0.8,shape=circle,draw=black] (B) at (1,1) {$w_3$};
  \node[scale=0.8,shape=circle,draw=black] (C) at (2,2) {$w_5$};
  \node[scale=0.8,shape=circle,draw=black] (D) at (0,2) {$w_4$};
  \node[scale=0.8,shape=circle,draw=black] (E) at (-1,1) {$w_2$};
  \path (A) edge (B);
  \path (B) edge (C);
  \path (B) edge (D);
  \path (A) edge (E);
  \node[scale=0.8] (I) at (-.5,1) {$p$};
  \node[scale=0.8] (J) at (1.5,2) {$q$};
    \draw[opaque,rounded corners] (0,.5) -- (-1.65,.5) -- (-1.65,1.2) -- (-.35,2.5) -- (2.5,2.5) -- (2.5,1.5) --(1.5,.5) -- (0,.5); 
 \end{tikzpicture}
 }
 \hspace{.1cm}
 \subfigure[{$q$}]{\label{fig:q}
 \begin{tikzpicture}[>=latex,scale=0.75]
  \node[scale=0.8,shape=circle,draw=black] (A) at (0,0) {$w_1$};
  \node[scale=0.8,shape=circle,draw=black] (B) at (1,1) {$w_3$};
  \node[scale=0.8,shape=circle,draw=black] (C) at (2,2) {$w_5$};
  \node[scale=0.8,shape=circle,draw=black] (D) at (0,2) {$w_4$};
  \node[scale=0.8,shape=circle,draw=black] (E) at (-1,1) {$w_2$};
  \path (A) edge (B);
  \path (B) edge (C);
  \path (B) edge (D);
  \path (A) edge (E);
  \node[scale=0.8] (I) at (-.5,1) {$p$};
  \node[scale=0.8] (J) at (1.5,2) {$q$};
  \draw[opaque,rounded corners] (1.25,2.5) rectangle (2.55,1.5);
 \end{tikzpicture}
 }
 \hspace{.1cm}
 \subfigure[{$\neg q$}]{\label{fig:notq}
 \begin{tikzpicture}[>=latex,scale=0.75]
  \node[scale=0.8,shape=circle,draw=black] (A) at (0,0) {$w_1$};
  \node[scale=0.8,shape=circle,draw=black] (B) at (1,1) {$w_3$};
  \node[scale=0.8,shape=circle,draw=black] (C) at (2,2) {$w_5$};
  \node[scale=0.8,shape=circle,draw=black] (D) at (0,2) {$w_4$};
  \node[scale=0.8,shape=circle,draw=black] (E) at (-1,1) {$w_2$};
  \path (A) edge (B);
  \path (B) edge (C);
  \path (B) edge (D);
  \path (A) edge (E);
  \node[scale=0.8] (I) at (-.5,1) {$p$};
  \node[scale=0.8] (J) at (1.5,2) {$q$};
  \draw[opaque,rounded corners,rotate=45] (-.5,.7) rectangle (1.9,2);
 \end{tikzpicture}
 }
  \hspace{.1cm}
 \subfigure[{$q\vee\neg q$}]{\label{fig:lemq}
 \begin{tikzpicture}[>=latex,scale=0.75]
  \node[scale=0.8,shape=circle,draw=black] (A) at (0,0) {$w_1$};
  \node[scale=0.8,shape=circle,draw=black] (B) at (1,1) {$w_3$};
  \node[scale=0.8,shape=circle,draw=black] (C) at (2,2) {$w_5$};
  \node[scale=0.8,shape=circle,draw=black] (D) at (0,2) {$w_4$};
  \node[scale=0.8,shape=circle,draw=black] (E) at (-1,1) {$w_2$};
  \path (A) edge (B);
  \path (B) edge (C);
  \path (B) edge (D);
  \path (A) edge (E);
  \node[scale=0.8] (I) at (-.5,1) {$p$};
  \node[scale=0.8] (J) at (1.5,2) {$q$};
  \draw[opaque,rounded corners] (-1.8,.8) -- (-.35,2.5) -- (2.5,2.5) -- (2.5,1.5) --(0.2,1.5) -- (-.9,.07) -- cycle; 
 \end{tikzpicture}
 }
 \hspace{.1cm}
 \subfigure[{$?p$}]{\label{fig:questionp}
 \begin{tikzpicture}[>=latex,scale=0.75]
  \node[scale=0.8,shape=circle,draw=black] (A) at (0,0) {$w_1$};
  \node[scale=0.8,shape=circle,draw=black] (B) at (1,1) {$w_3$};
  \node[scale=0.8,shape=circle,draw=black] (C) at (2,2) {$w_5$};
  \node[scale=0.8,shape=circle,draw=black] (D) at (0,2) {$w_4$};
  \node[scale=0.8,shape=circle,draw=black] (E) at (-1,1) {$w_2$};
  \path (A) edge (B);
  \path (B) edge (C);
  \path (B) edge (D);
  \path (A) edge (E);
  \node[scale=0.8] (I) at (-.5,1) {$p$};
  \node[scale=0.8] (J) at (1.5,2) {$q$};
  \draw[opaque,rounded corners] (-1.6,1.5) rectangle (-.3,.5);
  \draw[opaque,rounded corners=3mm] (1,.15) -- (-.95,2.45) -- (2.95,2.45) -- cycle;
 \end{tikzpicture}
 }
 \hspace{.1cm}
 \subfigure[{$?q$}]{\label{fig:questionq}
 \begin{tikzpicture}[>=latex,scale=0.75]
  \node[scale=0.8,shape=circle,draw=black] (A) at (0,0) {$w_1$};
  \node[scale=0.8,shape=circle,draw=black] (B) at (1,1) {$w_3$};
  \node[scale=0.8,shape=circle,draw=black] (C) at (2,2) {$w_5$};
  \node[scale=0.8,shape=circle,draw=black] (D) at (0,2) {$w_4$};
  \node[scale=0.8,shape=circle,draw=black] (E) at (-1,1) {$w_2$};
  \path (A) edge (B);
  \path (B) edge (C);
  \path (B) edge (D);
  \path (A) edge (E);
  \node[scale=0.8] (I) at (-.5,1) {$p$};
  \node[scale=0.8] (J) at (1.5,2) {$q$};
  \draw[opaque,rounded corners] (1.25,2.5) rectangle (2.55,1.5);
  \draw[opaque,rounded corners,rotate=45] (-.5,.7) rectangle (1.9,2);
 \end{tikzpicture}
 }
 \hspace{.1cm}
\subfigure[{${?p}\wedge{?q}$}]{\label{fig:quepandqueq}
 \begin{tikzpicture}[>=latex,scale=0.75]
  \node[scale=0.8,shape=circle,draw=black] (A) at (0,0) {$w_1$};
  \node[scale=0.8,shape=circle,draw=black] (B) at (1,1) {$w_3$};
  \node[scale=0.8,shape=circle,draw=black] (C) at (2,2) {$w_5$};
  \node[scale=0.8,shape=circle,draw=black] (D) at (0,2) {$w_4$};
  \node[scale=0.8,shape=circle,draw=black] (E) at (-1,1) {$w_2$};
  \path (A) edge (B);
  \path (B) edge (C);
  \path (B) edge (D);
  \path (A) edge (E);
  \node[scale=0.8] (I) at (-.5,1) {$p$};
  \node[scale=0.8] (J) at (1.5,2) {$q$};
  \draw[opaque,rounded corners] (-1.6,1.5) rectangle (-.3,.5);
  \draw[opaque,rounded corners] (-.6,2.5) rectangle (.6,1.54);
  \draw[opaque,rounded corners] (1.25,2.5) rectangle (2.55,1.5);
 \end{tikzpicture}
 }
 \caption{The maximal support teams for some propositional formulas.}
 \label{fig:intteampolar}
\end{figure}
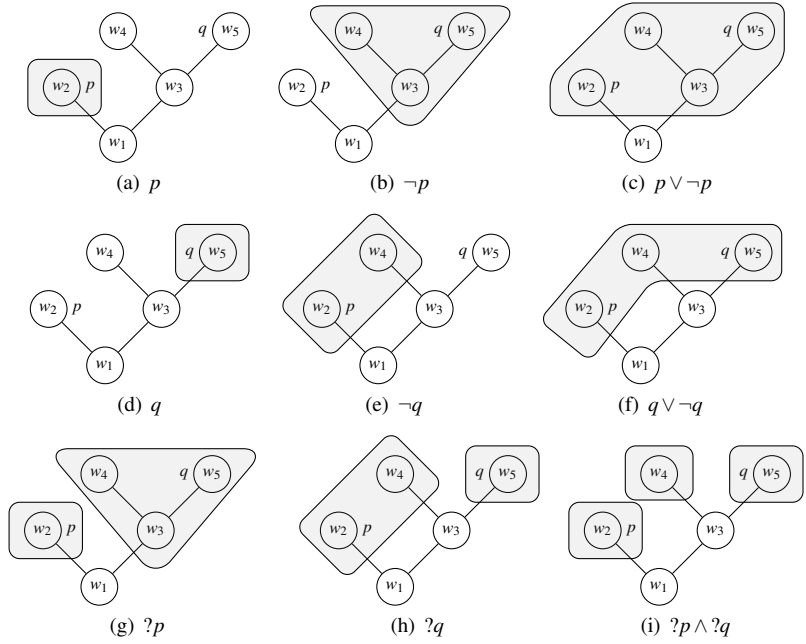

\begin{example} To get a concrete impression of the interpretation of statements and questions in \inqi, consider Figure~\ref{fig:intteampolar}. Since statements are truth-conditional by definition, we always have a unique largest team where a given statement is supported, which coincides with the set of all worlds where the statement is true. For standard formulas, Proposition \ref{prop:intuitionistic truth-conditions} ensures that this team coincides with the truth-set of the statement as given by standard Kripke semantics. This is illustrated by figures \ref{fig:p}-\ref{fig:lemq}.  By contrast, for questions we typically have multiple maximal supporting teams, corresponding to the different ways for the question to be resolved in the given model. This is illustrated by figures \ref{fig:questionp}-\ref{fig:quepandqueq}. Notice the difference between the meaning of the standard disjunction $p\lor\neg p$ and the meaning of the corresponding inquisitive disjunction $p\lori\neg p$, abbreviated as ${?p}$. To support $p\lor\neg p$, the information available in the team just has to ensure that $p$ has a definite truth-value. This is also necessary in order to support $?p$, but it is not sufficient: in this case, the information available in the team should also determine whether $p$ is true or false. The situation is analogous for the standard disjunction $q\lor\neg q$ and the inquisitive disjunction $?q$. Finally, notice that, as in the classical case, standard connectives are allowed to operate on questions. This is illustrated by Figure \ref{fig:quepandqueq}, which depicts the meaning of the conjunctive question ${?p}\land{?q}$: as we expect, this question is only supported by those teams that determine both the truth-value of $p$, and the truth-value of $q$.
\end{example}

\subsection{General features of the semantics}  
\label{sub:semantic features}
Now let us turn to some general feature of the semantics. As in the classical case, the semantics is persistent, meaning that if a formula $\phi$ is supported in a team $t$, it remains supported in any extension of $t$; however, now the extensions of $t$ are not only the subsets of $t$, but all teams $s\subseteq R[t]$. Moreover, as in the classical case, all formulas are supported by the empty team, which models the state of inconsistent information.

\begin{proposition}[Persistency and empty team property]\label{prop:persistency}~
\begin{itemize}
\item Persistency property: if $M,t\models\phi$ and $s\subseteq R[t]$, then $M,s\models\phi$
\item Empty team property: $M,\emptyset\models\phi$ for all $\phi\in\L$
\end{itemize}
\end{proposition}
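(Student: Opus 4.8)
The plan is to prove both clauses by a straightforward simultaneous induction on the complexity of the formula $\phi$, using the support clauses of Definitions~\ref{defsupportsemantics} and~\ref{defsupportsemantics_indisj}. For Persistency, I would fix a model $M$ and a team $t$ with $M,t\models\phi$, fix any $s\subseteq R[t]$, and show $M,s\models\phi$; the key preliminary observation, used repeatedly, is the monotonicity and idempotence of the operator $R[\cdot]$: since $R$ is reflexive and transitive, $s\subseteq R[t]$ implies $R[s]\subseteq R[R[t]]=R[t]$. For the Empty team property, I would simply note that $\emptyset\subseteq R[t]$ for every $t$, so once Persistency is in hand it suffices to observe that $M,\emptyset\models\phi$ can be read off directly in each case, or even derived as the instance $s=\emptyset$ of Persistency applied to any team supporting $\phi$; cleanest is to just verify $M,\emptyset\models\phi$ by induction in parallel.

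The induction steps for Persistency: for $\phi=p$, $M,t\models p$ means $V(w,p)=1$ for all $w\in t$; given $w'\in s\subseteq R[t]$ there is $w\in t$ with $wRw'$, and Persistency of $V$ gives $V(w',p)=1$. For $\phi=\bot$: if $M,t\models\bot$ then $t=\emptyset$, hence $R[t]=\emptyset$, hence $s=\emptyset$, so $M,s\models\bot$. The conjunction case is immediate from the induction hypothesis applied to each conjunct. For $\phi=\psi\lor\chi$: from $t=t'\cup t''$ with $M,t'\models\psi$, $M,t''\models\chi$, set $s'=s\cap R[t']$ and $s''=s\cap R[t'']$; since $s\subseteq R[t]=R[t']\cup R[t'']$ we get $s=s'\cup s''$, and $s'\subseteq R[t']$, $s''\subseteq R[t'']$, so the induction hypothesis applies to each piece. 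For $\phi=\psi\to\chi$: let $t'\subseteq R[s]$ with $M,t'\models\psi$; since $R[s]\subseteq R[t]$ (by the idempotence/monotonicity observation above), $t'\subseteq R[t]$, so the hypothesis $M,t\models\psi\to\chi$ yields $M,t'\models\chi$; hence $M,s\models\psi\to\chi$. Note this step uses only the definition of $\to$, not the induction hypothesis, which is why the argument goes through for the full language. The inquisitive disjunction case $\phi=\psi\lori\chi$ is immediate: $M,t\models\psi$ or $M,t\models\chi$, and in either case the induction hypothesis transfers support to $s$.

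For the Empty team property, the base and inductive cases are all trivial: $M,\emptyset\models p$ vacuously; $M,\emptyset\models\bot$ since $\emptyset=\emptyset$; conjunction, inquisitive disjunction, and disjunction (taking $t'=t''=\emptyset$) follow from the hypothesis on the immediate subformulas; and $M,\emptyset\models\psi\to\chi$ holds because $R[\emptyset]=\emptyset$, so the universal quantifier over $t'\subseteq R[\emptyset]$ ranges only over $\emptyset$, and $M,\emptyset\models\chi$ by the induction hypothesis.

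I do not expect a genuine obstacle here; the only point requiring a little care is the disjunction case for Persistency, where one must actually produce a decomposition of $s$ rather than reuse the given decomposition of $t$ — the trick of intersecting with $R[t']$ and $R[t'']$ handles this cleanly. Everything else is a mechanical application of the definitions together with the order-theoretic properties of $R$ (reflexivity and transitivity), and the argument is uniform over the whole language $\L$, including the $\lori$ clause, so there is nothing special about the questions.
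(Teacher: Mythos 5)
Your proof is correct and is exactly the argument the paper has in mind --- the paper's own proof just says ``Straightforward, by induction on $\phi$,'' and your induction (including the intersection trick $s'=s\cap R[t']$, $s''=s\cap R[t'']$ for the $\lor$ case and the use of $R[s]\subseteq R[R[t]]=R[t]$ for the $\to$ case) fills in the details correctly.
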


\begin{proof} Straightforward, by induction on $\phi$.
\end{proof}

\noindent An immediate consequence of persistency is the following property of our system: if a formula is supported by a team $t$, it is also supported by the whole up-set $R[t]$ generated by $t$, and conversely.

\begin{proposition}[Up-set property]\label{prop:up-set}~\\
$M,t\models\phi\iff M,R[t]\models\phi$
\end{proposition}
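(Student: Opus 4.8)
\textbf{Proof plan for the Up-set property (Proposition~\ref{prop:up-set}).}
The plan is to derive both directions from the Persistency property of Proposition~\ref{prop:persistency}, together with two elementary facts about the operator $R[\cdot]$: that $t\subseteq R[t]$ (which holds because $R$ is reflexive) and that $R[R[t]]=R[t]$ (which holds because $R$ is transitive and reflexive, so $R[\cdot]$ is idempotent). These two facts are the only structural input needed, and both follow directly from the assumption that $R$ is a partial order.

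For the left-to-right direction, suppose $M,t\models\phi$. Since $R$ is transitive and reflexive, $R[t]$ is an up-set and in particular $R[t]\subseteq R[R[t]]$; but more simply, to apply Persistency I just need $R[t]\subseteq R[t]$, which is trivial. Hence Persistency (with the team $s:=R[t]$, which satisfies $s\subseteq R[t]$) immediately gives $M,R[t]\models\phi$. For the right-to-left direction, suppose $M,R[t]\models\phi$. Since $R$ is reflexive, every $w\in t$ satisfies $wRw$, so $t\subseteq R[t]$. Applying Persistency again with $s:=t$ and the team $R[t]$ in place of the original team (note $t\subseteq R[t]=R[R[t]]$ by idempotence, so $t$ is an extension of $R[t]$), we conclude $M,t\models\phi$. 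This closes the biconditional.

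The only point requiring a word of care—and the closest thing to an obstacle—is checking that the side conditions of Persistency are met, i.e.\ that $t$ counts as an extension of $R[t]$: by Definition of extensions this means $t\subseteq R[R[t]]$, and $R[R[t]]=R[t]\supseteq t$ by idempotence of $R[\cdot]$ and reflexivity of $R$. So the step is legitimate. Everything else is immediate, and no induction on $\phi$ is needed since we are reusing the already-proved Persistency property as a black box.
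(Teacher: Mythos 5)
Your proof is correct and is exactly the paper's argument: the paper's one-line proof likewise observes that $R[t]\subseteq R[t]$ and $t\subseteq R[t]=R[R[t]]$ and then invokes Persistency in both directions. Nothing is missing; your extra care about the side condition $t\subseteq R[R[t]]$ just makes explicit what the paper leaves implicit.
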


\begin{proof} Since $R[t]\subseteq R[t]$ and $t\subseteq R[t]=R[R[t]]$, the result follows from persistency.
\end{proof}


\medskip
\noindent This proposition implies that, from the perspective of our semantics, the only thing that matters about a team $t$ is the up-set $R[t]$ that is generates. 
This can be used to show a preservation result in the opposite direction, from a team $t$ to a sub-team: if the relation $R$ is well-founded (i.e., if every $t\subseteq W$ contains an $R$-minimal element) then a team $t$ can be replaced by the set $\min(t)$ of its $R$-minimal elements without affecting support. 

\begin{proposition}[Minimal-set property]\label{prop:minimal-set}~\\
For any Kripke model $M=\<W,R,V\>$ where $R$ is well-founded, for any 
team $t$ and formula $\phi$: 
$$
M,t \models\phi \iff M,\min(t)\models\phi.
$$  
where $\min(t)=\{ w \in t \mid \text{for all }v\in t: vRw\text{ implies }v=w\}$.
\end{proposition}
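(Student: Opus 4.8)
The plan is to reduce the statement to the Persistency property (Proposition~\ref{prop:persistency}) by showing that $t$ and $\min(t)$ are extensions of each other, that is,
$$\min(t)\subseteq t\subseteq R[\min(t)].$$
The inclusion $\min(t)\subseteq t$ is immediate from the definition of $\min(t)$. For $t\subseteq R[\min(t)]$, I would fix an arbitrary $w\in t$ and consider the set $D_w=\{v\in t\mid vRw\}$; it is non-empty, since $w\in D_w$ by reflexivity of $R$. As $R$ is well-founded, $D_w$ has an $R$-minimal element $u$, and I claim $u\in\min(t)$: if $v\in t$ satisfies $vRu$, then $vRw$ by transitivity (using $uRw$), so $v\in D_w$, and $R$-minimality of $u$ within $D_w$ forces $v=u$. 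Hence $u\in\min(t)$, and since $uRw$ we get $w\in R[u]\subseteq R[\min(t)]$; as $w\in t$ was arbitrary, $t\subseteq R[\min(t)]$.

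Given these two inclusions, the equivalence follows at once. If $M,t\models\phi$, then since $\min(t)\subseteq t\subseteq R[t]$, persistency yields $M,\min(t)\models\phi$. Conversely, if $M,\min(t)\models\phi$, then since $t\subseteq R[\min(t)]$, persistency yields $M,t\models\phi$. (Alternatively, one may observe that the two inclusions together give $R[t]=R[\min(t)]$, using $R[R[S]]=R[S]$, and then appeal directly to the Up-set property, Proposition~\ref{prop:up-set}.) The case $t=\emptyset$ is covered automatically, since then $\min(t)=\emptyset$ and the argument goes through vacuously.

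The only step that is not entirely routine is the extraction, for each $w\in t$, of an element of $\min(t)$ lying $R$-below $w$: the key idea is to apply well-foundedness not to $t$ itself but to the set $D_w$ of elements of $t$ below $w$, and then to check that $R$-minimality within $D_w$ already entails $R$-minimality within all of $t$. Everything else is a direct application of results established earlier in this section.
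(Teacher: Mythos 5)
Your proof is correct and follows essentially the same route as the paper, which simply notes that well-foundedness gives $R[t]=R[\min(t)]$ and then appeals to the up-set/persistency properties. The only difference is that you spell out the well-foundedness argument (extracting, for each $w\in t$, an element of $\min(t)$ below it via the set $D_w$), a detail the paper leaves implicit.
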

\begin{proof} This follows from the previous proposition since, if $R$ is well-founded, then $R[t]=R[\min(t)]$.
\end{proof}

\noindent Another important fact about the semantics is that, for the purposes of evaluation at~$t$, the part of the model which lies outside the up-set $R[t]$ is irrelevant. If $M$ is a model and $t$ is a team in $M$, let us denote by $M_t$ the sub-model generated by $t$, i.e.,  
the restriction of $M$ to $R[t]$. 
Then, we have the following general property.

\begin{proposition}[Restriction property]\label{prop:locality}~\\
$M,t\models\phi\iff M_t,t\models\phi$
\end{proposition}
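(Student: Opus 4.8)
The plan is to prove the equivalence $M,t\models\phi\iff M_t,t\models\phi$ by a straightforward induction on the structure of $\phi$, where $M_t$ denotes the restriction of $M$ to the up-set $R[t]$. The crucial preliminary observation is that the relation $R$ and the valuation $V$ of $M_t$ are simply the restrictions of those of $M$, and that for any team $s\subseteq R[t]$ we have $R_{M_t}[s]=R[s]$, since $R[t]$ is up-closed and therefore $R[s]\subseteq R[t]$ whenever $s\subseteq R[t]$. Hence ``extension of a team'' means the same thing in $M$ and in $M_t$ for any team contained in $R[t]$. I would first note that it suffices to prove the statement for teams $t$ and then observe that in the inductive steps all teams we quantify over (namely subteams of $R[t]$, or teams $t',t''$ with $t'\cup t''=t$) are again contained in $R[t]$, so the induction hypothesis applies to them with the \emph{same} generated submodel $M_t$.

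The base cases are immediate: for an atom $p$, both sides say $V(w,p)=1$ for all $w\in t$, and the valuation of $M_t$ agrees with $V$ on $R[t]\supseteq t$; for $\bot$, both sides say $t=\emptyset$. The conjunction case is trivial. For disjunction $\phi=\psi\lor\chi$: if $t=t'\cup t''$ with $M,t'\models\psi$ and $M,t''\models\chi$, then $t',t''\subseteq t\subseteq R[t]$, so by the induction hypothesis $M_t,t'\models\psi$ and $M_t,t''\models\chi$, giving $M_t,t\models\psi\lor\chi$; the converse is symmetric, using that a splitting of $t$ witnessed in $M_t$ is also a splitting witnessed in $M$. The case $\phi=\psi\lori\chi$ is handled the same way, even more directly since no splitting is involved. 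For implication $\phi=\psi\to\chi$: here we quantify over all $t'\subseteq R[t]$ (in $M$) resp.\ all $t'\subseteq R_{M_t}[t]=R[t]$ (in $M_t$); since these two ranges coincide, and since any such $t'$ satisfies $t'\subseteq R[t]=R[t']$-up-set-wise so that $M_{t'}=M_t$ restricted appropriately — more simply, since $t'\subseteq R[t]$ we may apply the induction hypothesis to $\psi$ and $\chi$ at $t'$ with submodel $M_{t'}$, but we actually want the submodel $M_t$; this is the one point that needs a small extra remark.

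The main (minor) obstacle is exactly this last point: the induction hypothesis, as stated, relates $M$ and $M_{t'}$, whereas in the implication clause I want to relate $M$ and $M_t$ at the subteam $t'$. The clean way around this is to strengthen the induction hypothesis slightly, proving the more general statement that for \emph{any} model $N$ with $M_t\subseteq N\subseteq M$ (meaning $N$ is the restriction of $M$ to some up-closed set of worlds containing $R[t]$) one has $M,t\models\phi\iff N,t\models\phi$; equivalently, and more simply, prove by induction the statement ``for every team $t$ and every up-closed $U\supseteq R[t]$, $M,t\models\phi\iff M{\restriction}U,t\models\phi$'', of which the proposition is the case $U=R[t]$. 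With this formulation the implication step goes through cleanly: a subteam $t'\subseteq R[t]\subseteq U$ also satisfies $R[t']\subseteq U$, so the induction hypothesis applies at $t'$ with the same up-set $U$, and the quantifier ranges $\{t'\mid t'\subseteq R[t]\}$ match on both sides because $R[t]$ is computed the same way in $M$ and in $M{\restriction}U$. The remaining cases are unaffected by this reformulation. This is the only subtlety; everything else is routine.
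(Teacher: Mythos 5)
Your proof is correct and follows exactly the route the paper intends (the paper's own proof is just the one-liner ``straightforward, by induction on $\phi$''). You have in fact done slightly better than the paper by identifying the genuine subtlety in the implication clause and resolving it with the strengthened induction hypothesis over up-closed $U\supseteq R[t]$, which is the right fix.
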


\begin{proof} Straightforward, by induction on $\phi$. 
\end{proof}

\noindent
Proposition \ref{prop:up-set} implies that every formula that can be falsified in our semantics can be falsified on an up-set. 
In a similar spirit, the following result states that every formula that can be falsified can be falsified at a single world.

\begin{proposition}[Single world property]\label{prop:single world}~\\
If there are a Kripke model $M$ and a team $t$ such that $M,t\not\models\phi$, then there is a model $M'$ and a world $w$ such that $M',w\not\models\phi$.
\end{proposition}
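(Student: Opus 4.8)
The plan is to reduce the support-based falsification at a team to truth-based falsification at a world by passing through the normal form machinery and a suitable "cone" construction. First I would recall that by Proposition~\ref{prop:up-set} (the up-set property) and Proposition~\ref{prop:locality} (the restriction property), if $M,t\not\models\phi$ then $M_t,R[t]\not\models\phi$, so we may assume without loss of generality that $t=W=R[W]$ is the whole model and that the model is generated by $t$. Thus it suffices to show: if $M,W\not\models\phi$ for a Kripke model $M=\<W,R,V\>$, then there is a model $M'$ and world $w$ with $M',w\not\models\phi$.

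The key idea is to build a new model $M'$ by adding a fresh root $r$ below all of $W$, and showing that evaluating $\phi$ at the singleton $\{r\}$ in $M'$ mimics evaluating $\phi$ at $W$ in $M$. Concretely, let $M'=\<W',R',V'\>$ where $W'=W\cup\{r\}$ with $r\notin W$, $R'=R\cup(\{r\}\times W')$ (so $r$ sees everything, including itself), and $V'$ extends $V$ by setting $V'(r,p)=1$ iff $V(w,p)=1$ for all $w\in W$ (equivalently, $V'(r,p)=1$ iff $M,W\models p$); persistency of $V'$ is then immediate. Since $R'[\{r\}]=W'$, the teams $t'\subseteq R'[\{r\}]$ are exactly the subsets of $W'$. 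I would then prove, by induction on $\psi\in\L_!$, that for every team $s\subseteq W$ we have $M',s\models\psi\iff M,s\models\psi$ (this is just the restriction property again, since $M$ is the submodel of $M'$ generated by $W$), and moreover that $M',\{r\}\models\psi\iff M,W\models\psi$ for all $\psi\in\L$. The atomic case is exactly how $V'(r,\cdot)$ was defined; the $\bot$, $\land$, $\lori$ cases are routine; for $\lor$ the crucial point is that any decomposition $\{r\}=s'\cup s''$ forces $r$ into one of the pieces, and one uses persistency together with the fact that $\{r\}$ and $R'[\{r\}]=W'$ and hence $W$ support exactly the same formulas; for $\to$, one notes that the teams $s\subseteq R'[\{r\}]$ are exactly the subsets of $W'$, and those of the form $s\cup\{r\}$ behave, for support purposes, like $s\cup W$ by the up-set property, so the quantification matches the clause for $M,W\models\psi\to\chi$ which ranges over $s\subseteq R[W]=W$.

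The main obstacle is the implication case of this induction, because the antecedent of an implication may itself contain $\lori$, and one has to be careful that the range of teams quantified over on the $M'$ side genuinely corresponds to the range on the $M$ side. The clean way around this is to first invoke Theorem~\ref{theor:normal form cl} (the inquisitive normal form, which the excerpt promises holds in the intuitionistic setting too) to replace $\phi$ by $\aa_1\lori\dots\lori\aa_n$ with each $\aa_i\in\L_!$ standard: then $M,W\not\models\phi$ means $M,W\not\models\aa_i$ for every $i$, and by Proposition~\ref{prop:truth-conditionality} (standard formulas are truth-conditional) this means that for each $i$ there is $w_i\in W$ with $M,w_i\not\models\aa_i$. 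If all the $w_i$ could be taken equal to a single $w$ we would be done immediately with $M'=M$; in general they need not be, which is exactly why the added-root construction is needed — $r$ is a single world that "remembers" all of $W$. With the root in place, $M',\{r\}\models\aa_i$ would by truth-conditionality-at-$\{r\}$-versus-$W$ force $M,W\models\aa_i$, a contradiction, so $M',r\not\models\aa_i$ for every $i$, hence $M',r\not\models\phi$. This routes everything through results already established in the excerpt and keeps the only real inductive work confined to the straightforward verification that $M',\{r\}$ and $M,W$ agree on all standard formulas.
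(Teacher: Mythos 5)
Your final argument is correct, but it takes a substantially heavier route than the paper's, and the induction you first set out to do rests on a false claim. The paper's proof uses the same root-adding construction (with all atoms false at the new root $w$, though your choice of valuation at $r$ also satisfies persistency and works equally well), but it never needs the two-way correspondence you aim for. It only needs one implication: if $M',\{r\}\models\phi$ then, since $t\subseteq R'[\{r\}]$, persistency gives $M',t\models\phi$, and the restriction property (applied to the original team $t$, whose generated submodel is untouched by adding a root below it) gives $M,t\models\phi$. Contraposing finishes the proof in two lines, for arbitrary $\phi\in\L$, with no induction, no normal form, and no appeal to truth-conditionality.

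The biconditional $M',\{r\}\models\psi\iff M,W\models\psi$ that you propose to prove by induction is in fact false, and not only in the implication case you flag: take $W=\{w_1,w_2\}$ discrete with $p$ true only at $w_1$ and $q$ only at $w_2$; then $M,W\models p\lor q$ but $M',\{r\}\not\models p\lor q$, since no world of $W'$ below which $r$ could be placed makes both disjuncts recoverable from a cover of $\{r\}$. In particular the assertion in your $\lor$ case that $\{r\}$, $W'$ and $W$ ``support exactly the same formulas'' is wrong; only the direction from $\{r\}$ to $W$ holds (by persistency), and that is precisely the direction the paper exploits. Your fallback through the normal form theorem does repair this --- reducing to standard $\aa_i$, using persistency from $r$ down to $W$ and truth-conditionality to derive a contradiction --- and it is not circular, since the normal form theorem is proved later from the internal split property without using the single world property. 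But it imports a forward reference to one of the harder results of Section 4 to establish what is an immediate consequence of Propositions \ref{prop:persistency} and \ref{prop:locality}.
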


\begin{proof} If $M=\<W,R,V\>$, let $M'=\<W',R',V'\>$ be the model obtained by adding a new root to $M$ and making all propositional letters false at this root. More precisely, $W'=W\cup\{w\}$, where $w$ is a new world not in $W$. $V'$ coincides with $V$ on all worlds in $W$, and it makes every atom false at $w$. Finally, $R'=R\cup\{\<w,w'\>\mid w'\in W'\}$.

Now, the restriction of the models $M$ and $M'$ to $t$ is the same: $M'_t=M_t$. By the restriction property (Proposition \ref{prop:locality}), this gives the following:
$$M',t\models\phi\iff M'_t,t\models\phi\iff M_t,t\models\phi\iff M,t\models\phi$$
Since $M,t\not\models\phi$ by assumption, also $M',t\not\models\phi$. But by definition of $R'$, $t\subseteq R'[w]$. Thus, persistency implies $M',w\not\models\phi$.
\end{proof}

\noindent
This means that, in \ilqd, all formulas that are true at all worlds are also supported by all teams, and thus logically valid. This is in stark contrast to what happens in the classical case: in \lqd, $?p$ is true at all worlds in all models, but it is not logically valid. Indeed, in \lqd\ we have that from the perspective of a single world $w$, standard disjunction $\lor$ and inquisitive disjunction $\lori$ are indistinguishable (Proposition \ref{prop:standard variant}). So, if the single world property held in \lqd, there would not be any logical difference between standard disjunction and inquisitive disjunction. By contrast, in \ilqd\ the single world property holds, but as we will see in Section \ref{sub:truth-conditional}, even single worlds are in general sensitive to the difference between standard and inquisitive disjunction.

%
%


\subsection{Expressing dependencies in \ilqd}
\label{sub:dependency}

\noindent As in the classical case, it is natural to say that a question $\mu$ determines another question $\nu$ in a team $t$ of a model $M$ in case extending $t$ so as to settle $\mu$ is bound to lead to a state where $\nu$ is settled as well.
$$\mu\text{ determines }\nu\text{ in }t\iff \forall s\subseteq R[t]:M,s\models\mu\text{ implies }M,s\models\nu$$
The condition on the right is precisely what is required for the team $t$ to support of the implication $\mu\to\nu$. Thus, we have:
$$\mu\text{ determines }\nu\text{ in }t\iff M,t\models\mu\to\nu$$
Just as in the classical case, then, in \ilqd\ implications between questions express the existence of certain dependency relations. As an example, consider the formula $\dep(p,q)$, which we took to be an abbreviation for ${?p}\to {?q}$. As in the classical case, this expresses the fact that relative to the given team $t$, the truth-value of $q$ is functionally determined by the truth-value of $p$. However, 
now this dependency has to be robust enough to take into account the fact that the worlds in $t$ may also become more defined. Concretely, we have the following characterization (where, recall, $T(w,p)\!\deq\! T(w',p)$ means that the two values are defined and equal).

\begin{proposition}\label{prop:dependency char1}~\\
$M,t\models{?p}\to {?q}\iff \forall w,w'\!\in\! R[t], T(w,p)\!\deq\! T(w',p)\text{ implies }T(w,q)\!\deq\! T(w',q)$
\end{proposition}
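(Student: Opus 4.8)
The plan is to unfold the support condition for $?p \to {?q}$ using the definitions and Proposition \ref{prop:polar questions}, and then massage the resulting quantifier structure into the stated pointwise condition over $R[t]$. First I would use the Up-set property (Proposition \ref{prop:up-set}) to replace $t$ by $R[t]$, so without loss of generality I may assume $t = R[t]$ is itself an up-set; then every $s \subseteq R[t]$ is a subteam of $t$, and the support clause for implication becomes the simpler condition $\forall s \subseteq t:\ M,s\models {?p}\text{ implies }M,s\models{?q}$. By Proposition \ref{prop:polar questions}, $M,s\models{?p}$ iff $T(w,p)\deq T(w',p)$ for all $w,w'\in s$, and similarly for $?q$.

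For the right-to-left direction, assume the pointwise condition: for all $w,w'\in R[t]=t$, if $T(w,p)\deq T(w',p)$ then $T(w,q)\deq T(w',q)$. Take any $s\subseteq t$ with $M,s\models{?p}$; then by the above all worlds in $s$ agree on a defined value of $p$, hence by the hypothesis all worlds in $s$ agree on a defined value of $q$ (applying the hypothesis to each pair in $s$), so $M,s\models{?q}$. Hence $M,t\models {?p}\to{?q}$.

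For the left-to-right direction, assume $M,t\models{?p}\to{?q}$ and take arbitrary $w,w'\in R[t]=t$ with $T(w,p)\deq T(w',p)$. Consider the two-element (or singleton, if $w=w'$) team $s=\{w,w'\}\subseteq t$. By Proposition \ref{prop:polar questions}, since $T(w,p)$ and $T(w',p)$ are defined and equal, $M,s\models{?p}$; so the implication gives $M,s\models{?q}$, which by Proposition \ref{prop:polar questions} again means $T(w,q)\deq T(w',q)$. This is exactly the desired conclusion. The two halves together establish the equivalence.

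The only mild subtlety — and the step I would be most careful about — is the reduction to subteams of an up-set: one must check that the Up-set property applies not just to $?p$ and $?q$ individually but legitimises rewriting $\{s \mid s\subseteq R[t]\}$ for $\{s \mid s\subseteq R[t]\}$, which is immediate since $R[R[t]] = R[t]$, so no genuine obstacle arises there. Alternatively, one can skip this reduction entirely and work directly with $s\subseteq R[t]$, using $s=\{w,w'\}$ for $w,w'\in R[t]$ in the hard direction and noting $\{w,w'\}\subseteq R[t]$ automatically; this is perhaps cleaner and avoids invoking Proposition \ref{prop:up-set} at all. Everything else is a routine combination of the support clauses and Proposition \ref{prop:polar questions}.
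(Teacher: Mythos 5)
Your proposal is correct and follows essentially the same route as the paper: the converse direction unfolds the implication clause via Proposition \ref{prop:polar questions} applied to arbitrary $s\subseteq R[t]$, and the forward direction instantiates the implication at the two-element team $\{w,w'\}\subseteq R[t]$ (the paper merely inlines the content of Proposition \ref{prop:polar questions} for that team using truth-conditionality and persistency rather than citing it). The preliminary up-set reduction you describe is sound but, as you yourself note, unnecessary, since one can work directly with subteams of $R[t]$ exactly as the paper does.
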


\begin{proof} Suppose $M,t\models{?p}\to {?q}$. Take worlds $w,w'\in R[t]$ s.t.\ $T(w,p)\!\deq\! T(w',p)$. This means that either $p$ is true at both worlds, or $\neg p$ is. In the former case, $M,\{w,w'\}\models p$; in the latter case, $M,\{w,w'\}\models\neg p$; in either case, $M,\{w,w'\}\models{?p}$. Since $\{w,w'\}\subseteq R[t]$ and $M,t\models{?p}\to{?q}$, this implies $M,\{w,w'\}\models{?q}$. This means that either $M,\{w,w'\}\models p$, or $M,\{w,w'\}\models\neg p$. In the former case, by persistency $p$ is true at both $w$ and $w'$. In the latter case, by persistency $p$ is false at both $w$ and $w'$. In either case, we thus have $T(w,q)\deq T(w',q)$. 

Conversely, suppose $\forall w,w'\!\in\! R[t], T(w,p)\!\deq\! T(w',p)\text{ implies }T(w,q)\!\deq\! T(w',q)$. Consider a team $s\subseteq R[t]$ such that $M,s\models{?p}$. By Proposition \ref{prop:polar questions}, this means that the truth-value of $p$ is defined and constant throughout $s$. Now take any worlds $w,w'\in s$. We must have $T(w,p)\deq T(w',p)$. Since $w,w'\in s\subseteq R[t]$, our assumption applies, giving $T(w,q)\deq T(w',q)$. Since $w$ and $w'$ were arbitrary in $s$, this shows that the truth-value of $q$ is defined and constant in $s$, which again by Proposition \ref{prop:polar questions} implies $M,s\models{?q}$. This shows that $M,t\models{?p}\to{?q}$.
\end{proof}

\noindent An equivalent characterization of the support conditions of ${?p}\to {?q}$ is the following, which brings out even more explicitly the fact that this formula captures the existence of a functional dependency between the truth-values of $p$ and $q$. The proof is straightforward, given the previous proposition.

\begin{proposition}\label{prop:dependency char2}~\\
$M,t\models{?p}\to {?q}\;\iff\;\text{there is a function }f:\{0,1\}\to\{0,1\}\text{ s.t. }\forall w\in R[t]$\\
$\phantom{M,t\models{?p}\to {?q}\;\iff\;}\;\text{if }T(w,p)\text{ is defined, then }T(w,q)=f(T(w,p))$
\end{proposition}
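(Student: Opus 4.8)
The plan is to read this off Proposition~\ref{prop:dependency char1}, which already reduces support of ${?p}\to {?q}$ at $t$ to a condition formulated purely in terms of the truth-values $T(w,p)$ and $T(w,q)$ for $w\in R[t]$. So it suffices to show that the right-hand side of Proposition~\ref{prop:dependency char1} is equivalent to the existence of a function $f:\{0,1\}\to\{0,1\}$ with the stated property, and then chain the two equivalences.

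For the direction from the functional condition to the condition of Proposition~\ref{prop:dependency char1}, I would argue as follows. Suppose such an $f$ exists, and take $w,w'\in R[t]$ with $T(w,p)\deq T(w',p)$; call their common value $i$. Applying the hypothesis to $w$ and to $w'$ separately, $T(w,q)$ and $T(w',q)$ are both defined and both equal to $f(i)$, so $T(w,q)\deq T(w',q)$. This is exactly the condition of Proposition~\ref{prop:dependency char1}, whence $M,t\models{?p}\to {?q}$.

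For the converse, assume $M,t\models{?p}\to {?q}$, so by Proposition~\ref{prop:dependency char1} we have: for all $w,w'\in R[t]$, $T(w,p)\deq T(w',p)$ implies $T(w,q)\deq T(w',q)$. I would then define $f$ by cases on $i\in\{0,1\}$. If there is some $w\in R[t]$ with $T(w,p)=i$, then applying the condition with $w'=w$ shows $T(w,q)$ is defined, and applying it to any two such worlds shows $T(w,q)$ takes a single value across all of them; let $f(i)$ be that value. If no world of $R[t]$ realizes the $p$-value $i$, set $f(i)=0$ (the choice is immaterial). Then for any $w\in R[t]$ with $T(w,p)$ defined, putting $i=T(w,p)$ gives $T(w,q)=f(i)=f(T(w,p))$, as required.

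The whole argument is routine; there is no genuine obstacle. The only points that deserve a word of care are that the implication in Proposition~\ref{prop:dependency char1}, used with $w'=w$, already guarantees that $T(w,q)$ is defined whenever $T(w,p)$ is (so $f$ can be read off with no gap), and that the value assigned to $f$ on $p$-values not realized within $R[t]$ plays no role.
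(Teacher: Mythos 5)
Your proof is correct and follows exactly the route the paper intends: the paper's own justification is simply that the result is ``straightforward, given the previous proposition,'' and your argument is precisely that routine derivation from Proposition~\ref{prop:dependency char1}, including the key diagonal observation that instantiating its condition with $w'=w$ yields definedness of $T(w,q)$ whenever $T(w,p)$ is defined.
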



\begin{figure}
 \centering
 \begin{tikzpicture}[scale=0.7]
  \node[shape=circle,draw=black] (A) at (0,0) {$w$};
  \node[shape=circle,draw=black] (B) at (2,2) {$v$};
  \node[shape=circle,draw=black] (E) at (-2,2) {$u$};
  \path (A) edge (B);
  \path (A) edge (E);
  \node (H) at (2.7,2)  {$p$};
  \node (J) at (-3,2)  {$p,q$};
 \end{tikzpicture}
 \caption{A Kripke model where the dependency ${?p}\to{?q}$ fails at the singleton team~$\{w\}$.}
 \label{fig:idep_singleton}
\end{figure}
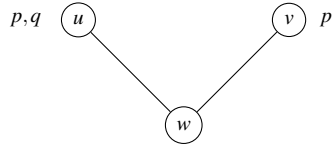

\noindent Notice an interesting difference with the classical setting: in the present setting, a dependency may fail to hold even with respect to a singleton team. For instance consider the model in Figure \ref{fig:idep_singleton} and the singleton team $\{w\}$:
the dependency ${?p}\to{?q}$ does not hold relative to this singleton, because there is no function $f:\{0,1\}\to\{0,1\}$ which, as soon as the truth value of $p$ becomes defined, allows us to derive from this the truth-value of $q$. Indeed, $p$ becomes true at both the proper successors of $w$, but $q$ is true in the one and false in the other.  


Of course, this discussion extends easily to dependencies that involve more than two questions. 
We say that $\nu$ is determined by $\mu_1,\dots,\mu_n$ relative to a team $t$ in case extending $t$ so as to settle the questions $\mu_1,\dots,\mu_n$ is bound to lead to a team that settles $\nu$. This is precisely what is expressed by the implication $\mu_1\land\dots\land\mu_n\to\nu$.

In particular, if $\aa_1,\dots,\aa_n,\bb$ are standard formulas, the fact that the truth-value of $\bb$ is functionally determined by the values of $\aa_1,\dots,\aa_n$ is captured by the implication ${?\aa_1}\land\dots\land{?\aa_n}\to{?\bb}$. Propositions \ref{prop:dependency char1} and  \ref{prop:dependency char2} generalize straightforwardly to this case, as follows.

\begin{proposition}~\\
Let $M$ be a Kripke model, $t$ a team, and $\aa_1,\dots,\aa_n,\bb$ standard formulas. Then the following are equivalent:
\begin{enumerate}
\item $M,t\models{?\aa_1}\land\dots\land{?\aa_n}\to {?\bb}$
\item $\forall w,w'\!\in\! R[t]:\text{if } T(w,\aa_i)\deq T(w',\aa_i)\text{ for all }i\le n\text{ then }T(w,\bb)\deq T(w',\bb)$
\item $\text{There is }f:\{0,1\}^n\overset{\phantom{*}}{\to}\{0,1\}\text{ such that }T(w,\bb)=f(T(w,\aa_1),\dots,T(w,\aa_n))$\\
$\text{whenever }w\in R[t]\text{ and }T(w,\aa_1),\dots,T(w,\aa_n)\text{ are defined}$
\end{enumerate}
\end{proposition}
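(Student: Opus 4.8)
The plan is to prove the three-way equivalence by establishing $(1)\Leftrightarrow(2)$ directly, following the pattern of the proof of Proposition~\ref{prop:dependency char1}, and then $(2)\Leftrightarrow(3)$ by an elementary argument about functions on $\{0,1\}^n$. The only new ingredient over the two-question case is that the antecedent is now a conjunction of polar questions rather than a single one, so I first record the support condition for such a conjunction: by the clause for $\land$ together with Proposition~\ref{prop:polar questions}, we have $M,s\models{?\aa_1}\land\dots\land{?\aa_n}$ iff for every $i\le n$ and all $w,w'\in s$, $T(w,\aa_i)\deq T(w',\aa_i)$, i.e.\ iff each $\aa_i$ has a defined and constant truth-value throughout $s$.

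For $(1)\Rightarrow(2)$: assume $M,t\models{?\aa_1}\land\dots\land{?\aa_n}\to{?\bb}$ and take $w,w'\in R[t]$ with $T(w,\aa_i)\deq T(w',\aa_i)$ for all $i\le n$. For each $i$, since $\aa_i$ and $\neg\aa_i$ are standard hence truth-conditional (Proposition~\ref{prop:truth-conditionality}), the common value being $1$ gives $M,\{w,w'\}\models\aa_i$, and the common value being $0$ gives $M,\{w,w'\}\models\neg\aa_i$; in either case $M,\{w,w'\}\models{?\aa_i}$. Hence $M,\{w,w'\}\models{?\aa_1}\land\dots\land{?\aa_n}$, and since $\{w,w'\}\subseteq R[t]$ the assumption yields $M,\{w,w'\}\models{?\bb}$, which by Proposition~\ref{prop:polar questions} means $T(w,\bb)\deq T(w',\bb)$. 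For the converse $(2)\Rightarrow(1)$: take any $s\subseteq R[t]$ with $M,s\models{?\aa_1}\land\dots\land{?\aa_n}$; by the remark above, each $\aa_i$ has a defined constant value on $s$, so any $w,w'\in s$ satisfy the hypothesis of $(2)$ and therefore $T(w,\bb)\deq T(w',\bb)$; as $w,w'$ were arbitrary, $\bb$ has a defined constant value on $s$, so $M,s\models{?\bb}$ by Proposition~\ref{prop:polar questions}. This establishes $(1)\Leftrightarrow(2)$.

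For $(2)\Leftrightarrow(3)$: if $(3)$ holds via $f$, and $w,w'\in R[t]$ have $T(w,\aa_i)\deq T(w',\aa_i)$ for all $i$, then in particular all these values are defined, so $T(w,\bb)=f(T(w,\aa_1),\dots,T(w,\aa_n))=f(T(w',\aa_1),\dots,T(w',\aa_n))=T(w',\bb)$, giving $(2)$. Conversely, assuming $(2)$, define $f$ on a tuple $\vec{x}\in\{0,1\}^n$ that is realized by some $w\in R[t]$ (i.e.\ $T(w,\aa_i)=x_i$ for all $i$) by setting $f(\vec{x})=T(w,\bb)$, which is a well-defined value by $(2)$ and is independent of the choice of such $w$ again by $(2)$; on tuples not realized by any such $w$, define $f$ arbitrarily. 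Then $f$ witnesses $(3)$. I do not expect a genuine obstacle here; the main point requiring care is the well-definedness of $f$ in the last step, which is exactly what condition $(2)$ guarantees, and the book-keeping that the partial truth-value $T(w,\aa_i)$ must be \emph{defined} (not merely constant-where-defined) before one may feed it to $f$ — this is why clause~$(3)$ is correctly phrased with the proviso ``whenever $T(w,\aa_1),\dots,T(w,\aa_n)$ are defined''.
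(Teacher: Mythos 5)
Your proof is correct. The paper gives no explicit proof of this proposition---it only remarks that Propositions \ref{prop:dependency char1} and \ref{prop:dependency char2} ``generalize straightforwardly''---and your argument is exactly that straightforward generalization: the $(1)\Leftrightarrow(2)$ part mirrors the paper's proof of Proposition \ref{prop:dependency char1} step for step (with the conjunction of polar questions handled via Proposition \ref{prop:polar questions}), and the $(2)\Leftrightarrow(3)$ part is the same elementary well-definedness argument implicit in Proposition \ref{prop:dependency char2}, including the correct observation that instantiating $(2)$ with $w'=w$ is what guarantees $T(w,\bb)$ is defined.
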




\noindent
Working in the classical setting, Abramsky and V\"a\"an\"anen  \cite{AbramskyVaananen:09} noticed that, once dependencies are expressed by means of implication, the Armstrong axioms for functional dependency  \citep{Armstrong:74}, well-known in the database theory literature, amount simply to the axioms of intuitionistic implicational logic. Since dependencies in the intuitionistic setting are still captured by implications, this means that Armstrong's axioms are still valid for dependencies in the intuitionistic setting.

\subsection{Two alternative setups for the semantics}
\label{sub:alternative setups}

We close this section by considering two alternative ways to set up a semantics for \inqi\ 
which turn out to be formally equivalent to the proposal we have spelled out in this section.

\subsubsection{Teams as sets of rooted Kripke models} In inquisitive logic, a team is considered to be a set of possible states of affairs, which means a set of models for the underlying logic of statements. What exactly these models are depends on the specific logic that we start out with. In the classical propositional setting, these models are essentially propositional valuations. In the intuitionistic setting, the natural counterpart of a valuation is a rooted Kripke model. 
Therefore, one might take a team $T$ to be a set $\{M_i\mid i\in\mathcal{I}\}$ of rooted Kripke models, rather than a set of nodes in a single Kripke model. The clauses defining the support relation $T\models\phi$ (without a background model, in this case) will be the same as those in in Definition~\ref{defsupportsemantics}, modulo adapting the definition of the extension relation between teams in the natural way.\footnote{Namely, we can regard a team $T'$ as an extension of $T$ if every model in $T'$ is a generated sub-model of some model in $T$. This means that, as in our approach, a team $T$ can be extended not only by discarding some candidate $M\in T$, but also by refining some $M\in T$ in one or more ways.}

This alternative approach results in the same logic as our approach, for the following reason. On the one hand, given a Kripke model $M$ and a team $t$ in our sense, we can take the set $T_t$ consisting of the sub-models generated by the worlds in $t$: $T_t=\{M_w\mid w\in t\}$: this is a set of rooted Kripke models, and thus a team in the alternative sense. Vice versa, given a set $T=\{M_i\mid i\in\mathcal{I}\}$ of Kripke models with roots $\{w_i\mid i\in\mathcal{I}\}$, we can consider the set $t_T=\{w_i\mid i\in\mathcal{I}\}$ and regard it as a team within the Kripke model $M_T=\biguplus_{i\in\mathcal{I}}M_i$.  One can then verify that these two transformations preserve the semantics, in the sense that we have $M,t\models\phi\iff T_t\models\phi$ and $T\models\phi\iff M_T,t_T\models\phi$.

\subsubsection{Restricting to upwards-closed teams} Another natural perspective is to base the semantics on information states construed not as arbitrary teams, but rather as \emph{upwards-closed} teams, i.e., teams $t$ such that whenever $w\in t$ and $wRv$, we have $v\in t$. This view can be justified based on the idea that an information state consists of those worlds that cannot be ruled out on the basis of the available information: it is natural to assume that if $w$ cannot be ruled out and if $w$ may be refined to obtain $v$, then $v$ cannot be ruled out either. This means that, if an information state is modeled as the set $t$ of worlds which it does not rule out, then $t$ must be upward-closed.

%


Notice that if $u$ is an upward-closed team, then $R[u]=u$. So, relative to upward-closed teams, the extension relation $u'\subseteq R[u]$ boils down to $u'\subseteq u$, just as in the classical case. Thus, in this perspective the move from classical to intuitionistic logic is obtained not by modifying the extension relation between teams, but rather by restricting the set of teams which are regarded as information states, and which are thus available in the semantics. This goes in the direction of the approach taken by Pun\v coch\'a\v r in \cite{Puncochar:15generalization}, which will be discussed in Section \ref{sec:related work}.

A modified support relation $M,u\models'\phi$ can be defined simply by restricting the clauses in Definition \ref{defsupportsemantics} in the obvious way to upward-closed teams. Again, it is easy to see that this modification does not lead to a semantics which is genuinely different from the one we have introduced in this section. In one direction, we can show that for upwards-closed teams $u$, the restricted semantics gives the same results as our general semantics: $M,u\models'\phi\iff M,u\models\phi$. Vice versa, our general semantics can be simulated in the restricted setting by looking at the upward-closure $R[t]$ of the team $t$ under consideration: $M,t\models\phi\iff M,R[t]\models'\phi$.



\section{Intuitionistic inquisitive logic}
\label{sec:logic}

In the previous section, we have seen how standard Kripke semantics for intuitionistic logic can be generalized so that we can interpret not only statements, but also questions and formulas that express dependencies. Let us now turn to the investigation of the logic that arises from this system. We start in Section \ref{sub:entailment} by establishing some fundamental features of this logic. In Section \ref{sub:normal form} we show that the inquisitive normal form result that we have seen in the classical setting carries over to the intuitionistic case. In Section \ref{sub:truth-conditional} we look at some characterizations of truth-conditional formulas in \ilqd. Finally, in Section \ref{sub:axiomatization} we establish a sound and complete axiomatization of our logic.

\subsection{Entailment in \ilqd}
\label{sub:entailment}

Entailment in the logic \ilqd\ is defined in the natural way.

\begin{definition}[Entailment]~\\
Let $\Phi\cup\{\psi\}\subseteq\L$. We say that $\Phi$ entails $\psi$ in \ilqd, notation $\Phi\entilqd\psi$, if for any Kripke model $M$ and team $t$ in $M$: $M,t\models\phi$ for all $\phi\in\Phi$ implies $M,t\models\psi$.
\end{definition}

\noindent When no confusion arises, we will allow ourselves to drop the subscript \ilqd. 
The first thing that we can verify is that, as we expect from the discussion in Section \ref{sub:intuitionistic support}, the system \ilqd\ is a conservative extension of intuitionistic propositional logic (\ipl). This means that, on standard formulas, entailment in \ilqd\ is just entailment in \ipl.

\begin{theorem}[Conservativity over intuitionistic logic]\label{conservativity_ipl}~\\
For $\Phi\cup\{\psi\}\subseteq\L_!$,  $\Phi\entilqd\psi\iff \Phi\models_{\textsf{IPL}}\psi$.
\end{theorem}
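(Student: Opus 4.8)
The statement should follow by combining two facts already established in the excerpt: that every standard formula is truth-conditional (Proposition~\ref{prop:truth-conditionality}), and that the notion of truth at a world that our support semantics induces on standard formulas is exactly the usual intuitionistic Kripke forcing (Proposition~\ref{prop:intuitionistic truth-conditions}). The plan is therefore (i) to rewrite team-based entailment between standard formulas as truth-preservation at single worlds, and (ii) to observe that the latter relation is, by Proposition~\ref{prop:intuitionistic truth-conditions}, precisely Kripke-semantic consequence in $\ipl$, which coincides with $\models_{\textsf{IPL}}$ by the Kripke completeness theorem for intuitionistic propositional logic.

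\textbf{Step 1: from teams to worlds.} First I would show that, for $\Phi\cup\{\psi\}\subseteq\L_!$,
$$\Phi\entilqd\psi\iff\text{ for all Kripke models }M\text{ and worlds }w:\ (M,w\models\phi\text{ for all }\phi\in\Phi)\text{ implies }M,w\models\psi.$$
For the left-to-right direction, given a world $w$ satisfying all of $\Phi$, apply truth-conditionality (Proposition~\ref{prop:truth-conditionality}) to the singleton team $\{w\}$: it supports every $\phi\in\Phi$, hence by $\Phi\entilqd\psi$ it supports $\psi$, i.e.\ $M,w\models\psi$. For the converse, let $M,t$ be such that $M,t\models\phi$ for all $\phi\in\Phi$; by Proposition~\ref{prop:truth-conditionality} every $w\in t$ satisfies every $\phi\in\Phi$, so by the world-level assumption every $w\in t$ satisfies $\psi$, and applying Proposition~\ref{prop:truth-conditionality} once more gives $M,t\models\psi$. (The empty team is unproblematic: the condition on the left is vacuous there.)

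\textbf{Step 2: identifying with $\ipl$.} By Proposition~\ref{prop:intuitionistic truth-conditions}, for standard formulas the clauses defining $M,w\models\phi$ are exactly the clauses of the standard Kripke forcing relation; and the models of Definition~\ref{defsupportsemantics} (taken with their forcing relation) are exactly the intuitionistic Kripke models. Hence the right-hand side of the equivalence in Step~1 says precisely that $\psi$ is a Kripke-semantic consequence of $\Phi$, which by soundness and completeness of $\ipl$ with respect to Kripke semantics is equivalent to $\Phi\models_{\textsf{IPL}}\psi$. Combining with Step~1 yields the theorem.

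\textbf{Main obstacle.} There is no deep difficulty here: the work is entirely in assembling previously proved results, and the only things to be careful about are the two directions of the team/world reduction in Step~1 and a couple of bookkeeping points in Step~2 — namely that restricting $R$ to partial orders (rather than arbitrary preorders) does not affect completeness of $\ipl$ (the class of finite posets already suffices), and that ``$\models_{\textsf{IPL}}$'' is read as the semantic consequence relation of intuitionistic logic, for which the Kripke completeness theorem is exactly what is needed to close the argument.
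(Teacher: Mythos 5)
Your proof is correct and follows essentially the same route as the paper: both reduce team entailment between standard formulas to truth-preservation at single worlds via truth-conditionality (Proposition~\ref{prop:truth-conditionality}), and then identify world-level truth with Kripke forcing via Proposition~\ref{prop:intuitionistic truth-conditions} and appeal to Kripke completeness of \ipl. The paper phrases the right-to-left direction contrapositively (extracting a single refuting world from a refuting team), but this is the same argument as your Step 1.
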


\begin{proof} Suppose $\Phi\entilqd\psi$. This means that whenever all formulas in $\Phi$ are supported, so is $\psi$. Since by definition truth is a particular case of support, this implies that whenever all formulas in $\Phi$ are true, so is $\psi$. On the other hand, Proposition \ref{prop:intuitionistic truth-conditions} ensures that for standard formulas, our notion of truth coincides with notion given by Kripke semantics. Since Kripke semantics characterizes \ipl, we have $\Phi\models_{\ipl}\psi$.

For the converse, suppose $\Phi\not\models_{\textsf{IPL}}\psi$. This means that there exists a Kripke model $M$ and a team $t$ in $M$ such that $t$ supports all formulas in $\Phi$ but does not support $\phi$. Since $\psi$ is a standard formula, by Proposition \ref{prop:truth-conditionality}, there must then be a world $w\in t$ which does not make $\psi$ true. By persistency, $w$ makes true all formulas in $\Phi$. Since truth coincides with the notion given by standard Kripke semantics, this shows that $\Phi\not\models_\ipl\psi$.
\end{proof}

\noindent Thus, as we wanted, our logic \ilqd\ integrates questions and dependence formulas within an intuitionistic logic of statements. As in the classical case, the presence of questions allows us to capture some interesting logical notions as cases of entailment which involve questions. In particular, if $\Gamma$ is a set of statements and $\mu$ is a question, then the entailment $\Gamma\models\mu$ holds if and only if the information provided by the statements in $\Gamma$ suffices to settle the question $\mu$. This is as in the classical case. However, the fact that \ilqd\ is based on intuitionistic logic makes it harder for a set of statements to qualify as settling a question. For instance, the entailment  $\neg\neg p\models {?p}$ is valid in the classical system \lqd\ described above, but it is invalid in our system \ilqd: intuitionistically, the statement $\neg\neg p$ does not provide sufficient information to resolve the question $?p$, since the truth of $\neg\neg p$ does not imply the truth of $p$.



If $\Gamma$ is a set of statements, $\Lambda$ a non-empty set of questions, and $\mu$ a question, then the entailment $\Gamma,\Lambda\models\mu$ holds if and only if given the statements in $\Gamma$, the question $\nu$ is completely determined by the questions in $\Lambda$. For instance, just as in the classical case, the entailment $p\leftrightarrow q,{?p}\models{?q}$ is valid, witnessing that given the assumption $p\leftrightarrow q$, the truth-value of $p$ determines the truth-value of $q$. Like in the case of answerhood, however, whether a dependency holds is a matter that hinges partly on the underlying logic of statements. For a basic example, the entailment $?\neg p\models{?p}$ is valid in the classical case, but not in \ilqd. Intuitionistically, the question $?\neg p$ does not logically determine the question $?p$, since one may settle $?\neg p$ by establishing 
$\neg\neg p$, which as we saw would not settle the question $?p$. 

Summing up, then, entailment in \ilqd\ provides an intuitionistic perspective on a number of interesting logical notions, which include not only the standard logical consequence relation between statements, but also the relation of logical answerhood between statements and questions, and the relation of logical dependency between questions. To the best of our knowledge, these logical relations have not been investigated before in the context of intuitionistic logic.

Let us now turn to the formal features of the entailment relation in \ilqd. An unsurprising but important feature of our logic which is worth stating explicitly, since it is often used without mentioning it later on, is the connection between entailment and implication---the semantic analogue of the deduction theorem. 

\begin{proposition}\label{prop:deduction} For all $\Phi\cup\{\psi,\chi\}\subseteq\L$, $\;\Phi,\psi\models\chi\iff \Phi\models\psi\to\chi$.
\end{proposition}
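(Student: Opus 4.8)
This is the semantic analogue of the deduction theorem, and the proof is a direct unwinding of the support clause for implication together with two facts already established: persistency (Proposition~\ref{prop:persistency}) and the reflexivity of $R$ (which gives $t\subseteq R[t]$ for every team $t$). I would prove the two directions separately.

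For the left-to-right direction, assume $\Phi,\psi\models\chi$ and let $M$ and $t$ be arbitrary with $M,t\models\phi$ for every $\phi\in\Phi$; the goal is $M,t\models\psi\to\chi$. Unfolding the support clause for $\to$, I take an arbitrary team $t'\subseteq R[t]$ with $M,t'\models\psi$ and must show $M,t'\models\chi$. Here is the one place persistency is used: since each $\phi\in\Phi$ is supported at $t$ and $t'\subseteq R[t]$, persistency yields $M,t'\models\phi$ for all $\phi\in\Phi$. Together with $M,t'\models\psi$ and the hypothesis $\Phi,\psi\models\chi$, this gives $M,t'\models\chi$, as required.

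For the right-to-left direction, assume $\Phi\models\psi\to\chi$ and let $M,t$ satisfy $M,t\models\phi$ for all $\phi\in\Phi$ and $M,t\models\psi$; the goal is $M,t\models\chi$. By the hypothesis, $M,t\models\psi\to\chi$, i.e.\ for every $t'\subseteq R[t]$, $M,t'\models\psi$ implies $M,t'\models\chi$. Since $R$ is reflexive we have $t\subseteq R[t]$, so instantiating $t':=t$ and using $M,t\models\psi$ gives $M,t\models\chi$.

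There is essentially no obstacle here; the only thing worth flagging is that both halves genuinely rely on the shape of the intuitionistic extension relation: the forward direction needs persistency along $R[t]$ rather than merely along subsets of $t$, and the backward direction needs $t\subseteq R[t]$. Both are available, so the argument goes through verbatim as in the classical case.
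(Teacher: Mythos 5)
Your proof is correct and matches the paper's approach: the paper dismisses this as ``straightforward, using the persistency property,'' and your write-up is exactly the intended unwinding, with persistency carrying $\Phi$ down to $t'\subseteq R[t]$ in the forward direction and reflexivity of $R$ giving $t\subseteq R[t]$ for the converse.
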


\begin{proof} Straightforward, using the persistency property of the semantics.
\end{proof}

\noindent
Importantly, the disjunction property, which is characteristic of intuitionistic logic, is preserved in this broader logical setting.

\begin{theorem}[Disjunction property for $\lor$]\label{theor:disjunction}~\\
For any formulas $\phi,\psi\in\L$, $\models\phi\lor\psi$ implies $\models\phi$ or $\models\psi$.
\end{theorem}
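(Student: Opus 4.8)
The plan is to reduce the disjunction property for $\lor$ to the single world property (Proposition~\ref{prop:single world}) together with the behaviour of standard disjunction at single worlds, via the normal form result (Theorem~\ref{theor:normal form cl}, which by the remark at the end of Section~\ref{sub:normal form classical} holds also in \ilqd). First I would apply the normal form theorem to write $\phi\equiv\aa_1\lori\dots\lori\aa_m$ and $\psi\equiv\bb_1\lori\dots\lori\bb_n$ with all $\aa_i,\bb_j\in\L_!$ standard. Then $\phi\lor\psi$ is equivalent to a formula built from standard formulas using $\lor$ and $\lori$; distributing $\lor$ over $\lori$ (which is a valid equivalence, as $\lor$ distributes over $\lori$ both ways — this is exactly the content of the clause for $\R(\phi\lor\psi)$ combined with $\R(\phi\lori\psi)$) yields
$$\phi\lor\psi\;\equiv\;\bigvee\nolimits^{\!\!\disji}_{i\le m,\,j\le n}(\aa_i\lor\bb_j).$$
So $\models\phi\lor\psi$ means $\models\bigvee^{\disji}_{i,j}(\aa_i\lor\bb_j)$.

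Now I invoke the \emph{split property} for \ilqd. In the classical case this is Proposition~\ref{prop:split classical}: if $\Gamma$ is a set of truth-conditional formulas and $\Gamma\models\chi_1\lori\chi_2$, then $\Gamma\models\chi_1$ or $\Gamma\models\chi_2$. Taking $\Gamma=\emptyset$ (which is trivially a set of truth-conditional formulas) and iterating over the finitely many disjuncts, $\models\bigvee^{\disji}_{i,j}(\aa_i\lor\bb_j)$ gives $\models\aa_{i_0}\lor\bb_{j_0}$ for some particular pair $(i_0,j_0)$. At this point everything in sight is standard: $\aa_{i_0}\lor\bb_{j_0}\in\L_!$, and by conservativity over intuitionistic logic (Theorem~\ref{conservativity_ipl}) we have $\models_{\textsf{IPL}}\aa_{i_0}\lor\bb_{j_0}$. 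The ordinary disjunction property of \ipl\ then yields $\models_{\textsf{IPL}}\aa_{i_0}$ or $\models_{\textsf{IPL}}\bb_{j_0}$, hence (again by conservativity) $\models\aa_{i_0}$ or $\models\bb_{j_0}$ in \ilqd. Since $\models\aa_{i_0}$ implies $\models\aa_1\lori\dots\lori\aa_m\equiv\phi$ by the introduction rule for $\lori$ (semantically: if a team supports $\aa_{i_0}$ it supports the $\lori$-disjunction), and likewise $\models\bb_{j_0}$ implies $\models\psi$, we conclude $\models\phi$ or $\models\psi$, as desired.

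The main obstacle is establishing the split property for \ilqd\ in the first place — it is stated for the classical system as Proposition~\ref{prop:split classical}, but has not yet been proved for the intuitionistic system in this excerpt. I expect it will itself follow from the single world property (Proposition~\ref{prop:single world}): if $\not\models\chi_1$ and $\not\models\chi_2$, there are worlds $w_1,w_2$ (in possibly different models) with $w_i\not\models\chi_i$; by taking a disjoint union and adjoining a common root below both (the construction already used in the proof of Proposition~\ref{prop:single world}), one obtains a single model with a team $t=\{w_1,w_2\}$, or rather its generated up-set, on which $\chi_1\lori\chi_2$ fails — because supporting $\chi_1\lori\chi_2$ at $t$ requires supporting one disjunct outright, and persistency from $t$ down to $\{w_i\}$ would then force $w_i\models\chi_i$. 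One must be slightly careful that the empty set of premises is genuinely unproblematic and that the up-set manipulations respect support (Propositions~\ref{prop:up-set} and~\ref{prop:locality} handle this). Alternatively, one could bypass the split property and argue directly: given $\models\phi\lor\psi$, apply the single world property's model surgery to any purported countermodel structure, but the cleanest route is the split-property reduction sketched above, so the real work is that lemma.
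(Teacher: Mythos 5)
Your argument is correct, but it takes a genuinely different route from the paper's. The paper proves the theorem directly by contraposition: given countermodels $M_1,t_1\not\models\phi$ and $M_2,t_2\not\models\psi$, it places a fresh root $w$ below the disjoint union $M_1\uplus M_2$; by the restriction property and persistency, neither $\phi$ nor $\psi$ is true at $w$, and since the singleton team $\{w\}$ admits only trivial splittings, $M,w\not\models\phi\lor\psi$. (This is exactly the root-adjunction surgery you mention at the very end as the "alternative" you chose to bypass --- in the paper it is the whole proof.) Your route instead factors the statement through the inquisitive normal form, the disjunction property for $\lori$, conservativity over \ipl, and the disjunction property of \ipl\ imported as an external fact. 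Every step checks out and there is no circularity: Theorem \ref{theor:normal form} is proved from the internal split property (Theorem \ref{theor:internal split}) and distribution laws, none of which depend on the $\lor$-disjunction property, and the split property for $\lori$ --- whose absence you flag as "the real work" --- is in fact established in the paper as Theorem \ref{theor:split} (with Corollary \ref{cor:dp for lori} as the $\Gamma=\emptyset$ case), by essentially the disjoint-union argument you sketch, so that worry dissolves. The costs of your route are that it inverts the paper's order of presentation (the normal form theorem only appears in Section \ref{sub:normal form}, after this result) and that it treats the disjunction property of \ipl\ as a black box, whereas the paper's direct construction reproves that classical fact, in generalized team-semantic form, as a byproduct. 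What your route buys is modularity: it makes explicit that the $\lor$-disjunction property of the full language is inherited from the disjunction property of whatever underlying logic of statements one starts with, using only conservativity, the normal form, and the $\lori$-disjunction property.
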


\begin{proof}
Suppose $\not\models\phi$ and $\not\models\psi$. Then, $M_1,t_1\not\models\phi$ and $M_2,t_2\not\models\psi$ for some Kripke models $M_1$ and $M_2$, and teams $t_1$ and $t_2$. Let $M$ be a Kripke model constructed by putting one point $w$ below the disjoint union $M_1\uplus M_2$ of the two models $M_1$ and $M_2$ and making no propositional variable true at $w$. We know that $M_1,t_1\models\phi$, which by Proposition \ref{prop:locality} implies $M,t_1\models\phi$. For the same reason, $M,t_2\models\psi$. Let $R$ be the partial order of $M$. Since $t_1,t_2\subseteq R[w]$, by persistency, we have $M,w\not\models\phi$ and $M,w\not\models\psi$, which imply $M,w\not\models\phi\lor\psi$. Hence, $\not\models\phi\lor\psi$.
\end{proof}

\noindent
Interestingly, the same property is shared by inquisitive disjunction. In fact, inquisitive disjunction satisfies an even more general property, the \emph{split property} that we have already encountered in the context of \lqd\ (Proposition \ref{prop:split classical}).

\begin{theorem}[Split property for $\lori$]\label{theor:split}~\\
If $\Gamma$ is a set of truth-conditional formulas, $\Gamma\models\phi\lori\psi$ implies $\Gamma\models\phi$ or $\Gamma\models\psi$.
\end{theorem}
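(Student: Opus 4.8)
The plan is to prove the contrapositive: assuming $\Gamma\not\models\phi$ and $\Gamma\not\models\psi$, I will build a single Kripke model and team that support every formula in $\Gamma$ but neither $\phi$ nor $\psi$, and hence, by the support clause for $\lori$, fail to support $\phi\lori\psi$, so that $\Gamma\not\models\phi\lori\psi$.

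First I would fix witnesses: a model $M_1$ and team $t_1$ with $M_1,t_1\models\gamma$ for all $\gamma\in\Gamma$ but $M_1,t_1\not\models\phi$, and symmetrically $M_2,t_2$ witnessing $\Gamma\not\models\psi$. Then I would form the disjoint union $M=M_1\uplus M_2$ --- a legitimate Kripke model, whose partial order and valuation are the disjoint unions of those of $M_1$ and $M_2$ --- and take $t=t_1\cup t_2$, viewing $t_1$ and $t_2$ as disjoint subsets of the domain of $M$. The key structural observation is that, since no $R$-edge joins the two components, the submodel $M_{t_1}$ of $M$ generated by $t_1$ is exactly $(M_1)_{t_1}$, and likewise $M_{t_2}=(M_2)_{t_2}$; this is what lets the Restriction Property (Proposition~\ref{prop:locality}) transfer support facts back and forth between each $M_i$ and $M$.

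With that in hand the two things to check are routine. (i) $M,t\models\gamma$ for every $\gamma\in\Gamma$: since $\gamma$ is truth-conditional it suffices that $M,w\models\gamma$ for every $w\in t$; for $w\in t_1$, two applications of the Restriction Property together with the structural observation give $M,w\models\gamma\iff M_1,w\models\gamma$, and the right-hand side holds because $M_1,t_1\models\gamma$ and $\gamma$ is truth-conditional; the case $w\in t_2$ is symmetric, and truth-conditionality then yields $M,t\models\gamma$. (ii) $M,t\not\models\phi$: by the Restriction Property and the structural observation, $M,t_1\models\phi\iff M_1,t_1\models\phi$, which fails, so $M,t_1\not\models\phi$; since $t_1\subseteq t\subseteq R[t]$, Persistency (Proposition~\ref{prop:persistency}) forbids $M,t\models\phi$. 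Symmetrically $M,t\not\models\psi$. Hence $M,t\not\models\phi\lori\psi$ while $M,t$ supports all of $\Gamma$.

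The step requiring care --- rather than a genuine difficulty --- is the gluing: unlike the classical Split Property (Proposition~\ref{prop:split classical}), where one can take the union of two teams inside one fixed model, here the two counterexamples may live in different Kripke models, so one must glue them and then verify that the support of the relevant formulas in $t_1$ (resp.\ $t_2$) is computed entirely within the $M_1$ (resp.\ $M_2$) part, which is precisely what the Restriction Property delivers once one notes the absence of cross-edges. Note also the contrast with the disjunction property for standard $\lor$ (Theorem~\ref{theor:disjunction}): there one places a fresh root beneath the disjoint union, whereas here one must \emph{not} do so, since $\phi$ and $\psi$ need to remain unsupported at the team $t$ itself, not merely at a world above it.
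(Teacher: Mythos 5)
Your proposal is correct and follows essentially the same route as the paper's proof: contraposition, gluing the two counterexamples via a disjoint union, transferring support with the Restriction Property, and using truth-conditionality of the formulas in $\Gamma$ together with Persistency to conclude. The only cosmetic difference is that you verify $M,t\models\gamma$ world-by-world via singleton restrictions, whereas the paper first lifts $M,t_i\models\gamma$ to $M$ and then invokes persistency and truth-conditionality; the ingredients are identical.
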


\begin{proof}
By contraposition, suppose  $\Gamma\not\models\phi$ and  $\Gamma\not\models\psi$. Then we have two Kripke models $M_1,M_2$ and two teams $t_1,t_2$ such that:
\begin{itemize}
\item  $M_1,t_1\models\gamma$ for all $\gamma\in\Gamma$,  
but  $M_1,t_1\not\models\phi$
\item  $M_2,t_2\models\gamma$ for all $\gamma\in\Gamma$,  
but $M_2,t_2\not\models\psi$
\end{itemize}
Now consider the model $M:=M_1\uplus M_2$ defined in the natural way as the disjoint union of $M_1$ and $M_2$, and consider the team $t:=t_1\uplus t_2$. Take any $\gamma\in\Gamma$. We know that $M_1,t_1\models\gamma$, which by the restriction property (Proposition \ref{prop:locality}) implies $M,t_1\models\gamma$. For the same reason, we know that $M,t_2\models\gamma$. By persistency, this implies that $\gamma$ is true at any world $w\in t_1\uplus t_2=t$. Since $\gamma$ is truth-conditional, it follows that $M,t\models\gamma$.

On the other hand, we know $M_1,t_1\not\models\phi$. By the restriction property, it follows that $M,t_1\not\models\phi$, which by persistency implies $M,t\not\models\phi$. Proceeding analogously we can conclude that $M,t\not\models\psi$. Hence, $M,t\not\models\phi\lori\psi$.

We have thus found a team which supports all formulas in $\Gamma$ but does not support $\phi\lori\psi$, which shows that $\Gamma\not\models\phi\lori\psi$.
\end{proof}

\noindent This property ensures that, in the intuitionistic case as well, a set of statements can only logically resolve a question by entailing a particular answer to the question. By taking $\Gamma=\emptyset$, we obtain the disjunction property for $\lori$ as a special case.

\begin{corollary}[Disjunction property for $\lori$]\label{cor:dp for lori}~\\ 
For any formulas $\phi,\psi\in\L$, $\models\phi\lori\psi$ implies $\models\phi$ or $\models\psi$.
\end{corollary}

\noindent
Combining Theorem \ref{theor:disjunction} and Corollary \ref{cor:dp for lori}, we obtain the following corollary. 

\begin{corollary}~\\ 
For any formulas $\phi_1,\dots,\phi_n\in\L$, $\;\models\phi_1\lor\dots\lor\phi_n\;\iff\;\, \models\phi_1\lori\dots\lori\phi_n$.
\end{corollary}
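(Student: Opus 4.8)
The plan is to reduce the $n$-ary statement to the binary disjunction properties already established, by a straightforward induction on $n$. The two ingredients are Theorem~\ref{theor:disjunction} and Corollary~\ref{cor:dp for lori} for the ``hard'' direction of each biconditional (extracting a valid disjunct), together with the trivial observation that a valid disjunct makes both the $\lor$- and the $\lori$-disjunction valid. The latter is immediate: for $\lori$ it follows directly from Definition~\ref{defsupportsemantics_indisj}, and for $\lor$ it follows from the support clause for $\lor$ by taking one of the two pieces to be $\emptyset$ and invoking the empty team property (Proposition~\ref{prop:persistency}); in particular $\phi_i\models\phi_1\lor\dots\lor\phi_n$ and $\phi_i\models\phi_1\lori\dots\lori\phi_n$ for each $i$.

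First I would fix the parenthesization, say right-associated, so that $\phi_1\lor\dots\lor\phi_n$ abbreviates $\phi_1\lor(\phi_2\lor\dots\lor\phi_n)$ and likewise for $\lori$. The base case $n=1$ is trivial, as both sides reduce to $\models\phi_1$. For the inductive step, assume the claim for $n-1$ formulas. In the direction ($\Rightarrow$), suppose $\models\phi_1\lor(\phi_2\lor\dots\lor\phi_n)$. By Theorem~\ref{theor:disjunction}, either $\models\phi_1$ or $\models\phi_2\lor\dots\lor\phi_n$; in the latter case the induction hypothesis gives $\models\phi_2\lori\dots\lori\phi_n$. In either case we have produced a valid disjunct of $\phi_1\lori\dots\lori\phi_n$, whence this formula is valid. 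The direction ($\Leftarrow$) is entirely symmetric, using Corollary~\ref{cor:dp for lori} in place of Theorem~\ref{theor:disjunction}: $\models\phi_1\lori(\phi_2\lori\dots\lori\phi_n)$ yields $\models\phi_1$ or $\models\phi_2\lori\dots\lori\phi_n$, the induction hypothesis upgrades the latter to $\models\phi_2\lor\dots\lor\phi_n$, and either way we obtain a valid disjunct of $\phi_1\lor\dots\lor\phi_n$.

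There is essentially no genuine obstacle here; the only thing that requires a word of care is that the cited disjunction properties are stated for binary disjunctions, so the $n$-ary statement must be obtained by peeling off one disjunct at a time, which is exactly what the induction does. One could equally avoid the induction by first noting (from associativity of both connectives, which is validated by the semantics) that $\models\phi_1\lor\dots\lor\phi_n$ iff $\models\phi_i$ for some $i$ iff $\models\phi_1\lori\dots\lori\phi_n$, but the inductive phrasing keeps the argument self-contained relative to the results already proved.
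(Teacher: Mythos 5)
Your proof is correct and matches the paper's intent exactly: the paper presents this corollary as an immediate consequence of combining Theorem~\ref{theor:disjunction} and Corollary~\ref{cor:dp for lori}, which is precisely the iterated/inductive application you spell out. The only detail the paper leaves implicit, and which you handle correctly, is that a valid disjunct validates the $\lor$-disjunction via the empty team property.
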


\noindent In spite of what this corollary may suggest, however, the two disjunction operators have different logical properties. To see this, let us first prove that, as in the classical case, the split property also holds from the perspective of the object language.

\begin{theorem}[Internal split property for $\lori$]\label{theor:internal split}~\\
If $\alpha$ is a truth-conditional formula, $\alpha\to\phi\lori\psi\models(\aa\to\phi)\lori(\aa\to\psi)$.
\end{theorem}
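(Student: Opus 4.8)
The plan is the standard inquisitive-logic argument adapted to Kripke teams: given a model $M$ and team $t$ with $M,t\models\alpha\to\phi\lori\psi$, isolate the largest subteam of $R[t]$ that supports $\alpha$, feed it to the antecedent, and case-split on which disjunct of $\phi\lori\psi$ it supports.

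Concretely, I would set $s:=\{w\in R[t]\mid M,w\models\alpha\}$. First I record two facts. (i) The set $s$ is upward closed in $R$: if $w\in s$ and $wRv$, then $v\in R[t]$ (as $R[t]$ is upward closed, $R$ being transitive) and $M,v\models\alpha$ by persistency of truth-values for $\alpha$ (equivalently, by the Persistency property together with truth-conditionality of $\alpha$), so $v\in s$; hence $R[s]=s$. (ii) Since $\alpha$ is truth-conditional, $M,s\models\alpha$; moreover, for \emph{any} team $t'\subseteq R[t]$ with $M,t'\models\alpha$, truth-conditionality gives $M,w\models\alpha$ for every $w\in t'$, and each such $w$ lies in $R[t]$, so $t'\subseteq s$.

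Now, since $s\subseteq R[t]$ and $M,s\models\alpha$, the hypothesis $M,t\models\alpha\to\phi\lori\psi$ gives $M,s\models\phi\lori\psi$, i.e.\ $M,s\models\phi$ or $M,s\models\psi$. In the first case I claim $M,t\models\alpha\to\phi$: take any $t'\subseteq R[t]$ with $M,t'\models\alpha$; by (ii) $t'\subseteq s=R[s]$, so the Persistency property yields $M,t'\models\phi$. The second case gives $M,t\models\alpha\to\psi$ symmetrically. Either way the support clause for $\lori$ gives $M,t\models(\alpha\to\phi)\lori(\alpha\to\psi)$, which is what we wanted. The only step requiring any care is fact (i), the upward-closedness of $s$, since it is what licenses applying persistency in the final move; it follows directly from persistency of truth-values for the truth-conditional formula $\alpha$. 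No well-foundedness of $R$ or other structural assumption is needed.
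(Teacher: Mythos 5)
Your proof is correct, but it takes a genuinely different route from the paper's. The paper argues by contraposition: assuming $M,t\not\models\aa\to\phi$ and $M,t\not\models\aa\to\psi$, it picks witnessing teams $s_1,s_2\subseteq R[t]$ (each supporting $\aa$ but refuting $\phi$, resp.\ $\psi$), and shows that the union $s_1\cup s_2$ supports $\aa$ (by truth-conditionality) while refuting both disjuncts (by persistency), hence refutes $\aa\to\phi\lori\psi$. You instead argue directly, by constructing the \emph{largest} subteam $s$ of $R[t]$ supporting $\aa$ and case-splitting on which disjunct $s$ supports; the maximality of $s$ then transfers that disjunct to every $\aa$-supporting subteam via persistency. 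Both arguments lean on the same two pillars --- truth-conditionality of $\aa$ to assemble a ``big'' $\aa$-supporting team, and persistency to pass back down to subteams --- so they are close cousins; the paper's version is slightly leaner (it needs only a two-element union and no discussion of upward closure), while yours makes the canonical witness explicit and yields the extra information that $\aa\to\chi$ is supported at $t$ iff $\chi$ is supported at the single team $s$. One small remark: your step (i) is not actually needed. The persistency property applies to any $t'\subseteq R[s]$, and $t'\subseteq s\subseteq R[s]$ already holds by reflexivity of $R$, so the final move goes through without knowing that $s=R[s]$; the upward-closedness of $s$ is true but redundant.
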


\begin{proof} Suppose $M,t\not\models\aa\to\phi$ and $M,t\not\models\aa\to\psi$. We will prove $M,t\not\models\aa\to\phi\lori\psi$. Since $M,t\not\models\aa\to\phi$, we have a team $s_1\subseteq R[t]$ such that $M,s_1\models\aa$ but $M,s_1\not\models\phi$. Similarly, since $M,t\not\models\aa\to\psi$, we have a team $s_2\subseteq R[t]$ such that $M,s_2\models\aa$ but $M,s_2\not\models\phi$. Now consider the team $s_1\cup s_2$. Obviously, $s_1\cup s_2\subseteq R[t]$. By persistency, $\aa$ must be true at any world in $s_1\cup s_2$, and since $\aa$ is truth-conditional this implies $M,s_1\cup s_2\models\aa$. However, by persistency we have $M,s_1\cup s_2\not\models\phi$ and $M,s_1\cup s_2\not\models\psi$, which implies $M,s_1\cup s_2\not\models\phi\lori\psi$. Finally, since there exists a team $s\subseteq R[t]$ such that $M,s\models\aa$ but $M,s\not\models\phi\lori\psi$, we have $M,t\not\models\aa\to\phi\lori\psi$.
\end{proof}

\noindent As we will see, the local split property plays an important role in establishing a normal form result and a completeness result for \ilqd. It also allows us to show that standard disjunction and inquisitive disjunction have different logical properties. It follows immediately by the internal split property and Proposition \ref{prop:deduction} that the formula $(p\to q\lori r)\to(p\to q)\lori(p\to r)$ is logically valid. By contrast, the formula $(p\to q\lor r)\to(p\to q)\lor(p\to r)$ is not valid, as witnessed by the model in Figure~\ref{fig:lor vs lori}. 
Thus, some of the principles that hold for inquisitive disjunction do not hold for standard disjunction.

\begin{figure}
 \centering
 \begin{tikzpicture}[scale=0.7]
  \node[shape=circle,draw=black] (A) at (0,0) {$w$};
  \node[shape=circle,draw=black] (B) at (2,2) {$v$};
  \node[shape=circle,draw=black] (E) at (-2,2) {$u$};
  \path (A) edge (B);
  \path (A) edge (E);
  \node (H) at (2.8,2)  {$p,r$};
  \node (J) at (-2.8,2)  {$p,q$};
 \end{tikzpicture}
 \caption{A model that teases apart standard disjunction and inquisitive disjunction on a singleton team. At the root $w$, $p\to q\lor r$ is true, but $p\to q\,{\protect\disji}\, r$ is not.}
 \label{fig:lor vs lori}
\end{figure}
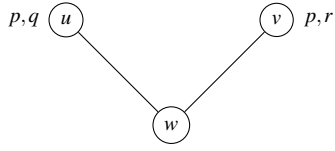

\subsection{Normal form}
\label{sub:normal form}

\noindent In Section \ref{sub:normal form classical} we have seen that the logic \lqd\ has a strong normal form result: every formula is equivalent to an inquisitive disjunction of standard formulas. This means that a formula can always be put in a form which brings all its ``inquisitive content'' to the surface layer. Perhaps surprisingly, this strong result carries over unmodified to the intuitionistic setting. Any formula $\phi\in\L$ is equivalent, in \ilqd\ as well as in \lqd, to the inquisitive disjunction of its resolutions, as given by Definition \ref{def:resolutions}. In particular, any formula is equivalent to an inquisitive disjunction of standard formulas.\footnote{It is important to note that the inquisitive normal form only concerns the interactions of inquisitive disjunction with the standard part of the language---not the interactions of standard connectives with each other. Indeed, one can check for each standard formula $\alpha$ we have $\R(\aa)=\{\aa\}$, and thus the inquisitive normal form theorem boils down to the trivial equivalence $\aa\equiv\aa$. 
This illustrates the fact that, although the normal form results allows us to bring all occurrences of inquisitive disjunction at the surface layer of the formula (i.e., to avoid occurrences of $\lori$ within the scope of other connectives) it does not impose any specific constraints on the form of the standard formulas occurring below this layer.}

\begin{theorem}[Inquisitive normal form]\label{theor:normal form}~\\
Let $\phi\in\L$ and $\R(\phi)=\{\aa_1,\dots,\aa_n\}$. Then $\phi\equiv_{\ilqd}\aa_1\lori\dots\lori\aa_n$. 
\end{theorem}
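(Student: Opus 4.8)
The plan is to prove the equivalence by induction on the structure of $\phi$, mirroring the classical argument behind Theorem~\ref{theor:normal form cl} and checking that each step survives the move to the intuitionistic support clauses. The base cases $\phi=p$ and $\phi=\bot$ are immediate, since $\R(p)=\{p\}$ and $\R(\bot)=\{\bot\}$, so the asserted equivalence is just $p\equiv p$, respectively $\bot\equiv\bot$. Before treating the inductive steps I would record a handful of purely semantic equivalences, each checked by unwinding Definitions~\ref{defsupportsemantics} and~\ref{defsupportsemantics_indisj}: (i)~$\lori$ is associative and commutative, so that an iterated inquisitive disjunction $\aa_1\lori\cdots\lori\aa_n$ is unambiguous; (ii)~$\land$ and $\lor$ each distribute over $\lori$, e.g.\ $\theta\land(\chi\lori\xi)\equiv(\theta\land\chi)\lori(\theta\land\xi)$, and likewise for $\lor$; (iii)~$(\chi\lori\xi)\to\theta\equiv(\chi\to\theta)\land(\xi\to\theta)$; and (iv)~all connectives are congruences for $\equiv$, so equivalent subformulas may be substituted. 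None of these uses anything about the intuitionistic extension relation beyond persistency (Proposition~\ref{prop:persistency}).

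Granting these lemmas, the cases $\phi=\psi\land\chi$, $\phi=\psi\lor\chi$, $\phi=\psi\lori\chi$ are routine. Write $\R(\psi)=\{\beta_1,\dots,\beta_k\}$ and $\R(\chi)=\{\gamma_1,\dots,\gamma_m\}$; the induction hypothesis together with (iv) lets us replace $\psi$ by $\beta_1\lori\cdots\lori\beta_k$ and $\chi$ by $\gamma_1\lori\cdots\lori\gamma_m$. For conjunction, repeated use of (ii) rewrites $(\beta_1\lori\cdots\lori\beta_k)\land(\gamma_1\lori\cdots\lori\gamma_m)$ as the iterated inquisitive disjunction of the formulas $\beta_i\land\gamma_j$, which is exactly the inquisitive disjunction of the elements of $\R(\psi\land\chi)$; the disjunction case is the same with $\lor$ replacing $\land$. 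For $\phi=\psi\lori\chi$ we have $\R(\psi\lori\chi)=\R(\psi)\cup\R(\chi)$, and the claim is immediate from the induction hypothesis and (i).

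The implication case is where the real work lies. Using (iv), (iii), and the fact that $\R(\psi)$ is nonempty, we get $\psi\to\chi\equiv\bigwedge_{i\le k}(\beta_i\to\chi)$. Each $\beta_i$ is a \emph{standard} formula, hence truth-conditional by Proposition~\ref{prop:truth-conditionality}; this is precisely the hypothesis under which the internal split property (Theorem~\ref{theor:internal split}) applies, yielding $\beta_i\to(\gamma_1\lori\cdots\lori\gamma_m)\models(\beta_i\to\gamma_1)\lori\cdots\lori(\beta_i\to\gamma_m)$, where the binary version of the theorem is iterated over $m$. The converse entailment holds by monotonicity of $\to$ in its consequent together with the support clause for $\lori$, so that, invoking the induction hypothesis $\chi\equiv\gamma_1\lori\cdots\lori\gamma_m$, we obtain $\beta_i\to\chi\equiv(\beta_i\to\gamma_1)\lori\cdots\lori(\beta_i\to\gamma_m)$ for every $i\le k$. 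Substituting these into $\bigwedge_{i\le k}(\beta_i\to\chi)$ and repeatedly applying the distributivity of $\land$ over $\lori$ from (ii) to bring the inquisitive disjunctions to the front, one finds that $\psi\to\chi$ is equivalent to the inquisitive disjunction of the set $\{\,\bigwedge_{\beta\in\R(\psi)}(\beta\to f(\beta))\mid f\colon\R(\psi)\to\R(\chi)\,\}$, which by Definition~\ref{def:resolutions} is exactly the inquisitive disjunction of the elements of $\R(\psi\to\chi)$.

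I expect the implication step to be the main obstacle: it is the only place where the proof depends on a nontrivial logical principle rather than on bookkeeping with support clauses, and it is essential there that the internal split property be available for \emph{all} standard formulas (Theorem~\ref{theor:internal split})---this is exactly why the proof system for \ilqd\ needs the full scheme \s\ and not merely the Kreisel--Putnam scheme restricted to negations, which in the intuitionistic setting is strictly weaker. Once the equivalence $\phi\equiv_{\ilqd}\aa_1\lori\cdots\lori\aa_n$ is proved, the consequence that every formula is equivalent to an inquisitive disjunction of standard formulas is automatic, since by construction every member of $\R(\phi)$ is a standard formula.
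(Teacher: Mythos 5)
Your proposal is correct and follows essentially the same route as the paper's own proof: induction on structure, with the distributive laws handling the easy connectives and the implication case resolved by first rewriting $\Lori\R(\psi)\to\chi$ as a conjunction, then applying the internal split property (Theorem~\ref{theor:internal split}) to each standard antecedent $\beta_i$, and finally distributing $\land$ over $\lori$ to obtain the disjunction indexed by functions $f\colon\R(\psi)\to\R(\chi)$. Your added remarks---explicitly noting the easy converse entailment in the split step and the reason the full scheme \s\ rather than \textsf{KP} is needed---are accurate refinements of what the paper leaves implicit.
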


\begin{proof} 
The theorem is proved by induction on the complexity of $\phi$. We only give the proof of the induction step for standard disjunction $\lor$ and implication.

Case $\phi=\psi\lor\chi$. By induction hypothesis, we have $\psi\equiv_{\ilqd}\Lori \R(\psi)$ and $\chi\equiv_{\ilqd}\Lori \R(\chi)$. It is easy to verify that for all formulas $\theta,\delta_1,\delta_2$, the distributive law $\theta\lor(\delta_1\lori\delta_2)\equiv_{\ilqd}(\theta\lor\delta_1)\lori(\theta\lor\delta_2)$ holds. By applying this law, we obtain $\psi\lor\chi\equiv_{\ilqd}\Lori\{\alpha\lor\beta\mid \alpha\in \R(\psi)\text{ and }\beta\in \R(\chi)\}=\Lori \R(\psi\lor\chi)$.

Case $\phi=\psi\to\chi$. We have
\begin{align*}
\psi\to\chi&\equiv_{\ilqd} \Lori\R(\psi)\to\Lori \R(\chi)\quad\quad\quad\text{(by induction hypothesis)}\\
&\equiv_{\ilqd}\bigwedge_{\alpha\in \R(\psi)}(\alpha\to\Lori \R(\chi))\\
&\hfill\quad\quad\quad\quad\quad\quad\quad\quad\quad\quad\quad(\text{since }\theta_1\lori\theta_2\to\delta\equiv_{\ilqd} (\theta_1\to\delta)\land(\theta_2\to\delta))\\
&\equiv_{\ilqd}\bigwedge_{\alpha\in \R(\psi)}\mathop{\DISJI}_{\beta\in \R(\chi)}(\alpha\to\beta)\quad\quad\quad\text{(by Theorem \ref{theor:internal split})}\\
&\equiv_{\ilqd}\DISJI\;{\{\!\bigwedge_{\alpha\in \R(\psi)}(\alpha\to f(\alpha))\mid f:\R(\psi)\to\R(\chi)\}}\\
&\hfill\quad\quad\quad\quad\quad\quad\quad\quad\quad\quad\quad\quad\text{(since $\theta\wedge(\delta_1\lori\delta_2)\equiv_{\ilqd}(\theta\wedge\delta_1)\lori(\theta\wedge\delta_2)$)}\\
&\equiv_{\ilqd}\Lori\R(\psi\to\chi)
\end{align*}

\end{proof}

\noindent Notice that this normal form result is immediately inherited by any strengthening of \inqi, since a stronger logic is bound to give rise to a coarser notion of equivalence. In particular, the analogous result for the classical case, Theorem \ref{theor:normal form cl} falls out as a corollary. It should be noted, however, that in stronger logics the normal form result can be given alternative formulations which are not valid in \ilqd. For instance, the first normal form result for classical inquisitive logic, established by Ciardelli and Roelofsen \cite{CiardelliRoelofsen:11jpl},  represents each formula as an inquisitive disjunction of negations. An analogous result fails for \ilqd: it is easy to see, e.g., that an atom $p$ is not equivalent to any inquisitive disjunction of negations in \ilqd.

Using the normal form result, we can show that \inqi\ has the finite model property.


\begin{theorem}[Finite Model Property]\label{fmp}
If $\not\models\phi$, then there exists a finite Kripke model $M$ and finite team $t$ such that $M,t\not\models\phi$.
\end{theorem}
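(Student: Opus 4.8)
The plan is to reduce finiteness questions about \ilqd\ to the well-known finite model property of intuitionistic propositional logic, using the inquisitive normal form as the bridge. One could try to filtrate directly on the intuitionistic team semantics, but it is far smoother to exploit the fact (Theorem~\ref{theor:normal form}) that every formula is equivalent to an inquisitive disjunction of \emph{standard} formulas, for which the classical machinery for \textsf{IPL}\ is available.

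\textbf{Main steps.} First I would apply Theorem~\ref{theor:normal form} to write $\phi\equiv_{\ilqd}\alpha_1\lori\dots\lori\alpha_n$, where $\R(\phi)=\{\alpha_1,\dots,\alpha_n\}$ is the finite, non-empty set of resolutions of $\phi$ and each $\alpha_i\in\L_!$. Since the support clause for $\lori$ is a plain disjunction, $M,t\models\phi$ iff $M,t\models\alpha_i$ for some $i$; in particular $\models\phi$ iff $\models\alpha_i$ for some $i$ (one direction being trivial, the other Corollary~\ref{cor:dp for lori}). So from $\not\models\phi$ we obtain $\not\models\alpha_i$ for every $i\le n$. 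As each $\alpha_i$ is standard, conservativity over intuitionistic logic (Theorem~\ref{conservativity_ipl}) gives $\not\models_{\textsf{IPL}}\alpha_i$, and by the finite model property of \textsf{IPL}\ there is a finite Kripke model $M_i$ and a world $w_i$ in it with $M_i,w_i\not\models\alpha_i$ (truth at a world in our sense coinciding with the usual Kripke notion by Proposition~\ref{prop:intuitionistic truth-conditions}). Next I would assemble these: let $M:=M_1\uplus\dots\uplus M_n$ be the disjoint union, which is again a finite intuitionistic Kripke model since no edges run between components, and let $t:=\{w_1,\dots,w_n\}$, a finite team in $M$. Because there are no cross-component edges, the submodel of $M$ generated by $\{w_i\}$ is exactly the submodel of $M_i$ generated by $\{w_i\}$, so the restriction property (Proposition~\ref{prop:locality}) yields $M,w_i\not\models\alpha_i$ for each $i$. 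Fixing any $i\le n$: since $w_i\in t$, $M,w_i\not\models\alpha_i$, and $\alpha_i$ is truth-conditional (Proposition~\ref{prop:truth-conditionality}), we get $M,t\not\models\alpha_i$. As this holds for all $i$, we conclude $M,t\not\models\alpha_1\lori\dots\lori\alpha_n$, hence $M,t\not\models\phi$, with both $M$ and $t$ finite.

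\textbf{Where the work is.} There is no genuine obstacle here: the conceptual content is carried entirely by the normal form theorem, which transfers the problem to \textsf{IPL}. The only point demanding a little care is the assembly step, namely ensuring that a \emph{single} finite team refutes \emph{all} the resolutions at once; this is precisely what the disjoint union together with the restriction property and the truth-conditionality of standard formulas delivers, and one should state explicitly that disjoint unions of finitely many finite Kripke models are finite Kripke models in the sense of this paper.
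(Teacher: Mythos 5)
Your proof is correct and follows essentially the same route as the paper's: normal form, conservativity, the finite model property of \ipl, then a disjoint union with the team of the $n$ witnessing worlds. The only cosmetic difference is that you pass from $M,w_i\not\models\alpha_i$ to $M,t\not\models\alpha_i$ via truth-conditionality where the paper invokes persistency; both are immediate.
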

\begin{proof}
For any $\phi\in\Lp$, by Theorem \ref{theor:normal form} we have $\phi\equiv_\inqi\aa_1\lori\dots\lori\aa_n$, where $\R(\phi)=\{\aa_1,\dots,\aa_n\}$. If $\not\models_\inqi\phi$, then $\not\models_\inqi\alpha_i$ for all $1\leq i\leq n$. Since each $\alpha_i$ is a standard formula, by Theorem \ref{conservativity_ipl}, $\not\models_{\ipl}\alpha_i$ for each $i$. By the finite model property of \ipl, for each $1\leq i\leq n$ and by Proposition \ref{prop:intuitionistic truth-conditions}, there exists a finite Kripke model $M_i$ and a world $w_i$ such that $M_i,w_i\not\models\alpha_i$.
Consider now the finite model $M=M_1\uplus\dots\uplus M_n$ and the finite team $t=\{w_1,\dots,w_n\}$. For each $i\in I$, Proposition \ref{prop:locality} gives $M,w_i\not\models\alpha_i$, which implies $M,t\not\models\alpha_i$ by persistency. Thus, $M,t\not\models\aa_1\lori\dots\lori\aa_n$, whence by Theorem \ref{theor:normal form} we conclude $M,t\not\models\phi$.
\end{proof}

\noindent An important consequence of this result is that, together with the completeness result that we are going to establish later on, it implies that \inqi\ is a decidable logic: there is a recursive procedure that, given a $\phi\in\L$ decides in a finite time whether $\models\phi$ holds. This procedure consists simply in searching in parallel for a proof of $\phi$ and for a finite counter-model to $\phi$. By completeness and the finite model property, this search is guaranteed to terminate in a finite time, yielding an answer to the question.

\begin{corollary} \inqi\ is decidable.
\end{corollary}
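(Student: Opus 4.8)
The statement to prove is the corollary that $\inqi$ is decidable.

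\medskip
\noindent\textbf{Proof proposal.}
The plan is to combine the Finite Model Property (Theorem~\ref{fmp}) with the completeness result announced for Section~\ref{sub:axiomatization}, running two semi-decision procedures in parallel. First I would observe that the set of provable formulas of $\inqi$ is recursively enumerable: the natural deduction system (the intuitionistic analogue of the system in Figure~\ref{fig:proof system lqd}) has finitely many rule schemes, each of which is effectively checkable, so one can systematically enumerate all finite proofs and hence all theorems $\phi$ with $\vdash_{\inqi}\phi$. By the completeness theorem, $\vdash_{\inqi}\phi$ iff $\models\phi$, so this enumerates exactly the valid formulas.

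\medskip
Second, I would argue that the set of \emph{non}-valid formulas is also recursively enumerable. Given $\phi$, by Theorem~\ref{theor:normal form} we can compute $\R(\phi)=\{\aa_1,\dots,\aa_n\}$ effectively from the syntax of $\phi$ (the clauses of Definition~\ref{def:resolutions} are a straightforward recursion), and $\not\models\phi$ iff $\not\models\aa_i$ for every $i$ (using Corollary~\ref{cor:dp for lori} together with $\phi\equiv\aa_1\lori\dots\lori\aa_n$), which by conservativity (Theorem~\ref{conservativity_ipl}) holds iff $\not\models_{\ipl}\aa_i$ for every~$i$. Since each $\aa_i$ is a standard formula and $\ipl$ has the finite model property and is itself decidable, one can decide $\not\models_{\ipl}\aa_i$; alternatively, one can simply enumerate all finite Kripke models and finite teams and test support — by Theorem~\ref{fmp}, if $\not\models\phi$ some finite countermodel will eventually be found. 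Checking $M,t\models\phi$ for a given finite $M$ and finite $t$ is decidable because the support clauses (Definitions~\ref{defsupportsemantics} and~\ref{defsupportsemantics_indisj}) quantify only over the finitely many teams $t'\subseteq R[t]$.

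\medskip
Finally, I would conclude by the standard observation that a set that is r.e.\ and co-r.e.\ is recursive: run the proof search and the finite-countermodel search in parallel on input $\phi$; by completeness and the finite model property exactly one of them terminates, so the procedure always halts and correctly decides whether $\models\phi$. The only genuinely nontrivial ingredient is the completeness theorem, which is invoked but not yet proved in the excerpt; everything else is routine. I would therefore present this corollary as an immediate consequence of Theorem~\ref{fmp} and the (forthcoming) completeness theorem, spelling out the parallel-search argument in a sentence or two, exactly as the surrounding text already indicates.
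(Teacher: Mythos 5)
Your proposal is correct and follows essentially the same argument as the paper: the corollary is obtained by searching in parallel for a proof of $\phi$ and for a finite counter-model, with termination guaranteed by the completeness theorem together with the finite model property. Your additional observation that one could instead reduce to the decidability of \ipl\ via the resolutions of $\phi$ is a valid shortcut, but the paper simply uses the standard parallel-search argument you give as your main route.
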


\subsection{Truth-conditionality and standard formulas}
\label{sub:truth-conditional}

\noindent Proposition \ref{prop:truth-conditionality} ensures that in \ilqd all standard formulas are truth-conditional. We saw in Section \ref{sub:adding questions} that adding inquisitive disjunction allows us to express questions, i.e., formulas that are not truth-conditional, such as $?p$. 
Clearly, then, the presence of inquisitive disjunction gives us some extra expressive power over the standard language: formulas like $?p$ are not equivalent to any standard formula. However, one may wonder whether the presence of inquisitive disjunction also allows us to express more statements beyond what intuitionistic logic can express. 

In the classical case, we saw that the answer is negative: standard formulas are representative of all truth-conditional formulas. We proved this by associating every formula $\phi$ with a standard formula $\phi^s$ obtained by replacing each inquisitive disjunction symbol $\lori$ with $\lor$, and proving that this preserves the formula's truth-conditions. We might expect the same  to be true in \ilqd, since the formulas $\phi\lor\psi$ and $\phi\lori\psi$ have exactly the same truth-conditions: both are true at a world if one of the disjuncts is true. Interestingly, this is not the case: a formula $\phi$ and the formula $\phi^s$ do \emph{not}, in general, have the same truth-conditions. In the intuitionistic case, even single worlds can discriminate between the two disjunction operators. To see this, notice that at the root of the model in Fig.\ \ref{fig:lor vs lori}, $p\to q\lor r$ is true, but $p\to q\lori r$ is not.




Technically, this difference between $\lor$ and $\lori$ arises because, even if we start from a singleton team, in the intuitionistic setting implication can lead us to consider non-singleton extensions of this team. This does not happen in the classical case: in \lqd, extensions are just subsets, and thus a singleton state can never have a non-singleton extension. This diagnosis leads us to suspect that the only case in which a truth-conditional difference between the two disjunctions can arise is when these disjunctions are embedded within an implication. The next proposition ensures that this is indeed the case. 

\begin{proposition}\label{prop:when replacement works}~\\
Suppose no occurrence of $\lori$ is within the scope of an implication in $\phi$. Then, $\phi$ and $\phi^s$ have the same truth-conditions. 
\end{proposition}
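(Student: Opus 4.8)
The plan is to prove the equivalence $M,w\models\phi\iff M,w\models\phi^s$ for every Kripke model $M$ and world $w$, by structural induction on $\phi$. The observation that makes the induction work is that the hypothesis ``no occurrence of $\lori$ is within the scope of an implication'' is hereditary: any implication occurring inside a subformula of $\phi$ is also an implication occurring in $\phi$, so if the hypothesis holds for $\phi$ it holds for each subformula, and the induction hypothesis can therefore be applied to $\psi$ and $\chi$ whenever $\phi$ is built from them.

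The base cases $\phi=p$ and $\phi=\bot$ are immediate, since there $\phi^s=\phi$. The key case is $\phi=\psi\to\chi$: the hypothesis forces $\psi$ and $\chi$ to contain no occurrence of $\lori$ at all --- any such occurrence would lie within the scope of the top implication --- so $\psi,\chi\in\L_!$, hence $\phi^s=\psi^s\to\chi^s=\psi\to\chi=\phi$ and there is again nothing to prove. This is exactly where the structural restriction earns its keep.

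The remaining cases $\phi=\psi\land\chi$, $\phi=\psi\lor\chi$, and $\phi=\psi\lori\chi$ all follow the same pattern. By heredity the induction hypothesis gives $M,w\models\psi\iff M,w\models\psi^s$ and likewise for $\chi$. One then checks that truth at a single world ``distributes'' over each connective: $M,w\models\psi\land\chi\iff M,w\models\psi$ and $M,w\models\chi$, and $M,w\models\psi\lor\chi$ (resp. $M,w\models\psi\lori\chi$) $\iff M,w\models\psi$ or $M,w\models\chi$. For $\land$ and $\lori$ this is just the support clause specialized to the team $\{w\}$; for $\lor$ one additionally uses the empty team property (Proposition~\ref{prop:persistency}), since a splitting $\{w\}=t'\cup t''$ can always be taken with one piece equal to $\{w\}$ and the other equal to $\emptyset$, which supports anything. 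On the standard side, $\phi^s$ is a standard formula, so the corresponding distribution facts for $\psi^s\land\chi^s$ and $\psi^s\lor\chi^s$ come directly from Proposition~\ref{prop:intuitionistic truth-conditions}. Combining these observations with the syntactic identities $(\psi\land\chi)^s=\psi^s\land\chi^s$, $(\psi\lor\chi)^s=\psi^s\lor\chi^s$, and $(\psi\lori\chi)^s=\psi^s\lor\chi^s$ closes each case.

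I do not anticipate a genuine obstacle: the whole argument is bookkeeping. The one point to be careful about is that the hypothesis really is inherited by subformulas (so that the induction hypothesis is available for $\psi$ and $\chi$), and that in the $\lor$ case the singleton-team computation genuinely collapses to a disjunction of truth at $w$, which is the only place the empty team property is invoked.
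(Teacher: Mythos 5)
Your proof is correct and follows essentially the same route as the paper's: the paper's (very terse) proof rests on exactly the observation that at a single world $\psi\lor\chi$ and $\psi\lori\chi$ have the same truth conditions (via the empty-team property for the splitting of $\{w\}$), followed by the structural induction you carry out. Your write-up simply makes explicit the two points the paper leaves implicit, namely that the hypothesis is hereditary and that the implication case degenerates because its subformulas must be $\lori$-free.
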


\begin{proof} For any $\psi,\chi\in\L$, the formulas $\psi\lor\chi$ and $\psi\lori\chi$ have the same truth-conditions: both are true when at least one of the disjuncts is true.
Using this fact, a simple proof by induction on the structure of $\phi$ suffices to establish the claim. 
\end{proof}

\noindent
Even though $\lori$ cannot generally be replaced by $\lor$ preserving truth-conditions, it turns out that any truth-conditional formula in \ilqd\ is still equivalent to a standard formula. To see this, we first associate with each formula $\phi$ a standard formula that has the same truth-conditions.

\begin{theorem}~\\
Let $\phi\in\L$ and $\R(\phi)=\{\aa_1,\dots,\aa_n\}$. Then, $\phi$ has the same truth conditions as $\aa_1\lor\dots\lor\aa_n$.
\end{theorem}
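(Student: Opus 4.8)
The plan is to combine the Inquisitive Normal Form theorem (Theorem~\ref{theor:normal form}) with Proposition~\ref{prop:standard variant}-style reasoning about standard variants, both of which are already available. By Theorem~\ref{theor:normal form}, $\phi\equiv_{\ilqd}\aa_1\lori\dots\lori\aa_n$, where $\R(\phi)=\{\aa_1,\dots,\aa_n\}$. Since equivalent formulas trivially have the same truth-conditions, it suffices to show that $\aa_1\lori\dots\lori\aa_n$ has the same truth-conditions as $\aa_1\lor\dots\lor\aa_n$. Now each $\aa_i$ is a standard formula, so no occurrence of $\lori$ lies within the scope of an implication in the formula $\aa_1\lori\dots\lori\aa_n$ (in fact $\lori$ only occurs at the top level). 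Proposition~\ref{prop:when replacement works} then applies directly: replacing each $\lori$ by $\lor$ preserves truth-conditions, and the result of that replacement is exactly $\aa_1\lor\dots\lor\aa_n$. This gives the claim.

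First I would invoke Theorem~\ref{theor:normal form} to rewrite $\phi$ as the inquisitive disjunction $\aa_1\lori\dots\lori\aa_n$ of its resolutions, observing that each $\aa_i\in\L_!$. Second, I would note that truth-conditions are preserved under $\equiv_{\ilqd}$, since truth at a world is a special case of support (support at the singleton team), so it remains only to compare $\aa_1\lori\dots\lori\aa_n$ with $\aa_1\lor\dots\lor\aa_n$. Third, I would apply Proposition~\ref{prop:when replacement works} to the formula $\aa_1\lori\dots\lori\aa_n$: the hypothesis of that proposition is met because all $\lori$'s sit at the surface, not under any implication, and the associated standard variant is precisely $\aa_1\lor\dots\lor\aa_n$. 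Combining these three observations yields that $\phi$ and $\aa_1\lor\dots\lor\aa_n$ have the same truth-conditions.

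Alternatively, and perhaps more self-containedly, one can argue directly at the level of truth at a world: for any model $M$ and world $w$, by the support clause for $\lori$ we have $M,w\models\aa_1\lori\dots\lori\aa_n$ iff $M,w\models\aa_i$ for some $i$; and since $\aa_i$ is a standard formula, Proposition~\ref{prop:intuitionistic truth-conditions} gives the usual disjunction clause for $\lor$, so $M,w\models\aa_1\lor\dots\lor\aa_n$ iff $M,w\models\aa_i$ for some $i$ as well. Hence the two formulas agree on truth at every world, and combining with the equivalence $\phi\equiv_{\ilqd}\aa_1\lori\dots\lori\aa_n$ from Theorem~\ref{theor:normal form} finishes the proof.

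There is essentially no obstacle here: the genuine work has already been done in proving the normal form theorem, and the remaining step is the elementary observation that a top-level inquisitive disjunction of standard formulas is truth-conditionally indistinguishable from the corresponding standard disjunction. The only point requiring a moment's care is to state explicitly why passing from $\equiv_{\ilqd}$ to "same truth-conditions" is legitimate --- namely that truth at $w$ is by definition support at $\{w\}$ --- and to make sure the hypothesis of Proposition~\ref{prop:when replacement works} is correctly checked (all $\lori$'s are at the surface, so none is under an implication). The statement is best viewed as a corollary setting up the subsequent definition of the standard variant for \ilqd\ and the characterization of truth-conditional formulas as exactly those equivalent to standard formulas.
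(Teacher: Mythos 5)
Your proof is correct and follows essentially the same route as the paper: invoke the normal form theorem to get $\phi\equiv_{\ilqd}\aa_1\lori\dots\lori\aa_n$, then apply Proposition~\ref{prop:when replacement works} to this formula, in which no $\lori$ occurs under an implication, to conclude that it has the same truth-conditions as $\aa_1\lor\dots\lor\aa_n$. The extra care you take in justifying that $\equiv_{\ilqd}$ preserves truth-conditions, and the alternative direct world-level argument, are both fine but not needed beyond what the paper does.
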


\begin{proof} By Theorem \ref{theor:normal form}, $\phi\equiv\aa_1\lori\dots\lori\aa_n$. In the formula $\aa_1\lori\dots\lori\aa_n$, no inquisitive disjunction is within the scope of an implication. So, Proposition \ref{prop:when replacement works} applies, ensuring that $\aa_1\lori\dots\lori\aa_n$ and $\aa_1\lor\dots\lor\aa_n$ have the same truth-conditions.
\end{proof}

\noindent If a formula $\phi\in\L$ is truth-conditional, the previous theorem implies that it is equivalent with the standard disjunction of its resolutions, which is a standard formula. Thus, we get the following corollary. 

\begin{corollary}\label{truth_cond=standard}~\\
For any $\phi\in\L$, $\phi$ is truth-conditional$\iff \phi\equiv\aa$ for some standard formula $\aa$.
\end{corollary}

\noindent
This shows that, as far as statements are concerned, our logic is not more expressive than intuitionistic logic. One can also prove that any truth-conditional formula $\phi$ is equivalent to a specific one of its resolutions. To see this, notice that by the normal form theorem, $\phi\models\aa_1\lori\dots\lori\aa_n$; since $\phi$ is truth-conditional, the split property applies, giving $\phi\models\aa_i$ for some $i\le n$. On the other hand, it follows immediately from the normal form result that $\aa_i\models\phi$. Hence, $\phi\equiv\aa_i$.

Before turning to the topic of axiomatization, it is worth pointing out another way in which our logic \ilqd\ departs from \lqd\ and more generally from classical versions of inquisitive logic. In Section \ref{sub:logic classical}, we saw that in \lqd\ the double negation law is the hallmark of truth-conditionality, in the sense that $\phi$ is truth-conditional iff $\phi\equiv\neg\neg\phi$ (Proposition \ref{sub:normal form classical}). This no longer holds in the intuitionistic setting: for a simple example, an atom $p$ is truth-conditional, but $p\not\equiv\neg\neg p$. What remains true in \inqi\ is that negations always are truth-conditional. Since $\neg\phi$ abbreviates $\phi\to\bot$, this is a particular case of the following, more general fact.

\begin{proposition} If $\psi$ is truth-conditional, then so is $\phi\to\psi$ for every formula $\phi$. 
\end{proposition}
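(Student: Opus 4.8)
The plan is to verify the two directions of the truth-conditionality biconditional for $\phi\to\psi$ separately. The left-to-right direction, namely that $M,t\models\phi\to\psi$ implies $M,w\models\phi\to\psi$ for every $w\in t$, holds for \emph{any} formula and is an immediate instance of the persistency property (Proposition \ref{prop:persistency}): since $w\in t$ we have $\{w\}\subseteq t\subseteq R[t]$, so support at $t$ transfers to support at $\{w\}$, which is truth at $w$ by definition. The only work is in the converse direction, where the hypothesis that $\psi$ is truth-conditional is used.

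For the converse, I would assume $M,w\models\phi\to\psi$ for all $w\in t$ and aim to show $M,t\models\phi\to\psi$. By the support clause for implication, it suffices to take an arbitrary team $t'\subseteq R[t]$ with $M,t'\models\phi$ and derive $M,t'\models\psi$. Since $\psi$ is truth-conditional, it is enough to show $M,w'\models\psi$ for every $w'\in t'$. Fix such a $w'$; because $t'\subseteq R[t]$, there is some $w\in t$ with $wRw'$, and hence $\{w'\}\subseteq R[w]$. From $M,t'\models\phi$ and persistency we get $M,\{w'\}\models\phi$, i.e.\ $M,w'\models\phi$. Now apply the assumption $M,w\models\phi\to\psi$, unfolded as: for all $s\subseteq R[w]$, $M,s\models\phi$ implies $M,s\models\psi$; instantiating $s=\{w'\}$ yields $M,w'\models\psi$. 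As $w'\in t'$ was arbitrary and $\psi$ is truth-conditional, $M,t'\models\psi$, completing the argument.

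There is no real obstacle here: the proof is a routine unfolding of the support clauses plus two appeals to persistency, together with a single use of the truth-conditionality of $\psi$ to pass from pointwise truth back to support on $t'$. The one point to be careful about is keeping the two relevant inclusions straight---$t'\subseteq R[t]$ giving the witness $w$ with $wRw'$, and then $\{w'\}\subseteq R[w]$ licensing the instantiation of the implication clause at $w$---but this is bookkeeping rather than a genuine difficulty. Since $\neg\phi$ is by definition $\phi\to\bot$ and $\bot$ is truth-conditional, the truth-conditionality of negations stated in the surrounding text falls out as the special case $\psi=\bot$.
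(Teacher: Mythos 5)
Your proof is correct and follows essentially the same route as the paper's: the paper likewise fixes $t'\subseteq R[t]$ supporting $\phi$, picks $v\in t'$, finds $w\in t$ with $wRv$, uses persistency to get truth of $\phi$ at $v$, applies the assumed truth of $\phi\to\psi$ at $w$, and finally invokes the truth-conditionality of $\psi$ to lift pointwise truth back to support on $t'$. The only difference is that you spell out the (trivial, persistency-based) left-to-right direction and the negation corollary explicitly, which the paper leaves implicit.
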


\begin{proof}
Consider a model $M$ and a team $t$, and suppose $M,w\models\phi\to\psi$ for all $w\in t$. Let $t'\subseteq R[t]$ be such that $M,t'\models\phi$. Now take any $v\in t'$: by persistency, we must have $M,v\models\phi$. Since $t'\subseteq R[t]$ we must have $wRv$ for some $w\in t$. Since $M,w\models\phi\to\psi$, it follows that $M,v\models\psi$. Since this is true for any $v\in t'$ and since $\psi$ is truth-conditional, we can conclude $M,t'\models\psi$. Thus, $M,t\models\phi\to\psi$.
\end{proof}

%

\subsection{Axiomatization}
\label{sub:axiomatization}

In Section \ref{sub:logic classical} we presented a sound and complete natural deduction system for the logic \lqd. The rules of this system are given in Figure \ref{fig:proof system lqd}. This system includes a complete natural deduction system for classical propositional logic. In particular, for standard formulas we have the double negation elimination rule. This rule is no longer sound in our setting, since $\neg\neg p\not\entilqd p$. However, we are now going to show that simply dropping the double negation elimination rule yields a sound and (strongly) complete proof system for our logic \ilqd. 

\begin{definition} Let $\Phi\cup\{\psi\}\subseteq\L$. We write $\Phi\prilqd\psi$ if there is a proof of~$\psi$ from assumptions in $\Phi$ which uses the inference rules in Figure\ \ref{fig:proof system lqd} except for \dn. We write $\phi\preqilqd\psi$ in case $\phi\prilqd\psi$ and $\psi\prilqd\phi$.
\end{definition}

\noindent As usual, proving that the given proof system is sound is just a matter of checking that each inference rule is sound. We omit the straightforward proof.

\begin{proposition}[Soundness] $\Phi\prilqd\psi\;\Rightarrow\;\Phi\entilqd\psi$
\end{proposition}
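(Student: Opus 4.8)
The plan is to prove soundness by a routine induction on the structure of derivations in the system of Figure~\ref{fig:proof system lqd} with the rule \dn\ removed. Concretely, I would verify that each inference rule is \emph{locally sound}: if a model $M$ and team $t$ support the open assumptions available above an application of the rule, and support its premises, then they support its conclusion. Since the base case $\psi\in\Phi\Rightarrow\Phi\models\psi$ is trivial, this yields $\Phi\prilqd\psi\Rightarrow\Phi\entilqd\psi$. The facts doing the work are: persistency and the empty-team property (Proposition~\ref{prop:persistency}); the semantic deduction theorem $\Phi,\psi\models\chi\iff\Phi\models\psi\to\chi$ (Proposition~\ref{prop:deduction}); truth-conditionality of standard formulas (Proposition~\ref{prop:truth-conditionality}); and the internal split property (Theorem~\ref{theor:internal split}).

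First I would dispatch the rules that fall straight out of the support clauses. The conjunction rules and the introduction rules for $\lor$ and $\lori$ are immediate; so is $(\lori\mathsf{e})$, since $M,t\models\phi\lori\psi$ gives $M,t\models\phi$ or $M,t\models\psi$, and one then invokes the relevant subderivation (whose other open assumptions from $\Phi$ still hold at $t$). The rule $(\bot\mathsf{e})$ is sound because $M,t\models\bot$ forces $t=\emptyset$, and the empty team supports everything. The structural $\lor$-rules reduce to elementary set manipulation: $(\lor\mathsf{a})$ and $(\lor\mathsf{c})$ are associativity and commutativity of $\cup$; for $(\lor\mathsf{d})$ one writes $t=t'\cup t''$ with $M,t'\models\phi$, $M,t''\models\psi\lori\chi$ and splits on which disjunct $t''$ supports; and for $(\lor\mathsf{r})$ one writes $t=t'\cup t''$, applies the two subderivations to $t'$ and to $t''$ (using persistency so the side assumptions from $\Phi$ survive restriction to $t'\subseteq t$ and $t''\subseteq t$), and recombines.

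The remaining rules --- $(\to\mathsf{i})$, $(\to\mathsf{e})$, $(\lor\mathsf{e})$ and $(\s)$ --- need a little more. For $(\to\mathsf{i})$ the induction hypothesis gives $\Phi,\phi\models\psi$, and Proposition~\ref{prop:deduction} converts this to $\Phi\models\phi\to\psi$; $(\to\mathsf{e})$ is immediate from the support clause for $\to$ taken with the instance $t'=t\subseteq R[t]$. The rule $(\lor\mathsf{e})$ is the one place where the syntactic side condition bites: from $t=t'\cup t''$ with $M,t'\models\phi$ and $M,t''\models\psi$ the subderivations give $M,t'\models\alpha$ and $M,t''\models\alpha$, and because $\alpha$ is \emph{standard}, hence truth-conditional (Proposition~\ref{prop:truth-conditionality}), this says $\alpha$ is true at every world of $t'$ and of $t''$, hence at every world of $t=t'\cup t''$, so $M,t\models\alpha$. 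Finally $(\s)$ is exactly the internal split property, Theorem~\ref{theor:internal split}.

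The only genuine subtleties here are bookkeeping ones: tracking open assumptions correctly through the rules with discharge ($(\to\mathsf{i})$, $(\lori\mathsf{e})$, $(\lor\mathsf{e})$, $(\lor\mathsf{r})$), and noticing that the soundness of $(\lor\mathsf{e})$ really does rely on the restriction of its conclusion to standard formulas --- without it one could derive $?p$ from the valid $p\lor\neg p$, which is not supported in general. Dropping \dn\ is of course forced, since $\neg\neg p\not\entilqd p$ in our semantics; this is the single point at which the argument departs from the soundness proof for \lqd.
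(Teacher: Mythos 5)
Your proposal is correct and matches the paper's intended argument exactly: the paper simply states that soundness "is just a matter of checking that each inference rule is sound" and omits the details, which you have supplied accurately (including the two points where care is needed, namely the restriction of $(\lor \mathsf{e})$ to standard conclusions via truth-conditionality, and the rule $(\s)$ via the internal split property). Nothing further is needed.
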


\noindent We are now going to prove that the given proof system is also strongly complete for \ilqd. To show this, we first need to establish some lemmata. The first lemma tells us that our system can prove the equivalence between a formula and its normal form.

\begin{lemma}[Provable normal form]\label{lemma:provable normal form}~\\ 
If $\phi\in\L$ and $\R(\phi)=\{\aa_1,\dots,\aa_n\}$, $\;\phi\preqilqd\aa_1\lori\dots\lori\aa_n$.
\end{lemma}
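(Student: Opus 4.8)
The strategy is to mimic the semantic proof of the Inquisitive Normal Form theorem (Theorem \ref{theor:normal form}), but carry out each equivalence inside the proof system $\prilqd$ rather than semantically. The argument goes by induction on the complexity of $\phi$. The atomic and $\bot$ cases are immediate, since $\R(p)=\{p\}$ and $\R(\bot)=\{\bot\}$, so there the claim reduces to the reflexivity of $\preqilqd$. The induction step for $\lori$ is also easy: if $\R(\phi)=\{\aa_1,\dots,\aa_k\}$ and $\R(\psi)=\{\bb_1,\dots,\bb_m\}$, then the induction hypothesis together with the introduction and elimination rules for $\lori$ (plus, if needed, the associativity/commutativity one gets from those rules at the level of $\lori$) yields $\phi\lori\psi\preqilqd\aa_1\lori\dots\lori\aa_k\lori\bb_1\lori\dots\lori\bb_m=\Lori\R(\phi\lori\psi)$.

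The substantive work is in the three induction steps for $\land$, $\lor$, and $\to$, and the key observation is that each semantic equivalence used in the proof of Theorem \ref{theor:normal form} has a provable counterpart. Concretely, I would first isolate a handful of "$\preqilqd$-lemmas" about how $\lori$ interacts with the other connectives, and prove each one directly from the rules in Figure \ref{fig:proof system lqd} (minus \dn): (i) the distribution of $\land$ over $\lori$, $\theta\land(\delta_1\lori\delta_2)\preqilqd(\theta\land\delta_1)\lori(\theta\land\delta_2)$, provable using $\land$-elimination, $\lori$-elimination, and $\land$-/$\lori$-introduction; (ii) the distribution of $\lor$ over $\lori$, $\theta\lor(\delta_1\lori\delta_2)\preqilqd(\theta\lor\delta_1)\lori(\theta\lor\delta_2)$ — here the left-to-right direction is exactly the rule $(\lor\textsf{d})$, and the right-to-left direction follows from $\lori$-elimination together with $(\lor\textsf{r})$; (iii) the fact that $\lori$ in the antecedent of an implication turns into a conjunction, $(\theta_1\lori\theta_2)\to\delta\preqilqd(\theta_1\to\delta)\land(\theta_2\to\delta)$, provable from $\to$-introduction/elimination and $\lori$-introduction/elimination; and (iv) for $\aa$ a standard formula, the internal split equivalence $\aa\to(\phi\lori\psi)\preqilqd(\aa\to\phi)\lori(\aa\to\psi)$ — the $\models$-direction here is precisely the axiom \s, and the converse direction is routine from $\lori$-elimination and $\to$-monotonicity. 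One also needs a congruence/replacement principle: if $\phi\preqilqd\phi'$ and $\psi\preqilqd\psi'$ then $\phi\circ\psi\preqilqd\phi'\circ\psi'$ for each connective $\circ\in\{\land,\lor,\to,\lori\}$, which is straightforward but should be stated, since the induction step applies the IH inside a compound formula.

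With these lemmas in hand, the induction steps run exactly parallel to the semantic proof. For $\phi=\psi\land\chi$: apply the IH to get $\psi\preqilqd\Lori\R(\psi)$ and $\chi\preqilqd\Lori\R(\chi)$, use congruence, then repeatedly apply lemma (i) to push the $\land$ inside and collect $\Lori\{\aa\land\bb\mid\aa\in\R(\psi),\bb\in\R(\chi)\}=\Lori\R(\psi\land\chi)$. For $\phi=\psi\lor\chi$: same, using lemma (ii) in place of (i). For $\phi=\psi\to\chi$: apply the IH and congruence to rewrite as $\Lori\R(\psi)\to\Lori\R(\chi)$; apply lemma (iii) to turn this into $\bigwedge_{\aa\in\R(\psi)}(\aa\to\Lori\R(\chi))$; apply lemma (iv) — legitimately, since every $\aa\in\R(\psi)$ is a standard formula — to rewrite each conjunct as $\mathop{\DISJI}_{\bb\in\R(\chi)}(\aa\to\bb)$; and finally apply lemma (i) again to distribute the outer $\bigwedge$ over the $\lori$'s, obtaining $\Lori\{\bigwedge_{\aa\in\R(\psi)}(\aa\to f(\aa))\mid f:\R(\psi)\to\R(\chi)\}=\Lori\R(\psi\to\chi)$.

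I expect the main obstacle to be bookkeeping rather than conceptual: verifying lemmas (ii) and (iii) carefully from the natural-deduction rules (especially getting the right-to-left direction of (ii) from $(\lor\textsf{r})$ and $(\lor\textsf{d})$), handling the $n$-ary versions of distributivity with the correct indexing of the functions $f:\R(\psi)\to\R(\chi)$, and making sure the congruence lemma is actually derivable for implication in the antecedent position (where one needs a provable monotonicity-in-the-antecedent fact, which is a routine consequence of $\to$-introduction and $\to$-elimination). None of these steps uses \dn, so the proof goes through verbatim in $\prilqd$; this is the reason the classical completeness proof adapts so smoothly, and it is exactly the point the footnote about \s\ versus \textsf{KP} was flagging.
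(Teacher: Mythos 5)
Your proposal is correct and follows essentially the same route as the paper, whose proof is just the one-line observation that the induction of Theorem \ref{theor:normal form} goes through once each semantic equivalence used there (the distribution laws, the antecedent-to-conjunction law, and the internal split via \s) is checked to be derivable in the system without \dn. Your version simply spells out the derivable equivalences and the congruence lemma that the paper leaves implicit.
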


\begin{proof} By induction on the structure of $\phi$. It suffices to check that each of the equivalences used in the proof of  Theorem \ref{theor:normal form} to bring a formula in inquisitive normal form can be deduced in the proof system.
\end{proof}

\noindent
The second lemma states that if $\psi$, together with some other assumption $\Phi$, fails to derive $\chi$, then this can be traced to the fact that some specific resolution $\aa\in\R(\psi)$ together with $\Phi$ fails to derive $\chi$.

\begin{lemma}\label{lemma:specification lemma} If $\,\Phi,\psi\not\vdash\chi\,$, then $\,\Phi,\aa\not\vdash\chi\,$ for some $\,\aa\in\R(\psi)$.
\end{lemma}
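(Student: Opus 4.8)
The plan is to argue by contraposition, exploiting the provable normal form (Lemma \ref{lemma:provable normal form}) together with the derivation rules for $\lori$. Suppose that $\R(\psi)=\{\aa_1,\dots,\aa_n\}$ and that, contrary to the conclusion, $\Phi,\aa_i\vdash\chi$ for every $i\le n$. From these $n$ derivations I would like to assemble a single derivation of $\chi$ from $\Phi$ together with $\aa_1\lori\dots\lori\aa_n$, and then use the provable equivalence $\psi\preqilqd\aa_1\lori\dots\lori\aa_n$ to convert this into a derivation of $\chi$ from $\Phi,\psi$, contradicting the hypothesis $\Phi,\psi\not\vdash\chi$.

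The combination step is a straightforward iterated application of the $\lori$-elimination rule $(\lori\mathsf{e})$. For $n=2$ it is immediate: from the subderivation of $\chi$ from $[\aa_1]$ (discharging $\aa_1$, keeping $\Phi$ open), the subderivation of $\chi$ from $[\aa_2]$, and the premise $\aa_1\lori\aa_2$, the rule $(\lori\mathsf{e})$ yields $\chi$ with only the assumptions in $\Phi$ and the disjunction still open. For general $n$ one proceeds by induction on $n$, associating the disjunction as $\aa_1\lori(\aa_2\lori\dots\lori\aa_n)$: the outer $(\lori\mathsf{e})$ has one branch discharging $[\aa_1]$ (which gives $\chi$ by the $i=1$ derivation) and one branch assuming $[\aa_2\lori\dots\lori\aa_n]$, to which the induction hypothesis applies since $\Phi,\aa_i\vdash\chi$ for $2\le i\le n$. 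One should note that $\R(\psi)$ is always nonempty (an easy induction on Definition \ref{def:resolutions}), so the base case $n=1$ is just $\Phi,\aa_1\vdash\chi$, and relabelling $\aa_1$ as $\psi$ via Lemma \ref{lemma:provable normal form} finishes it directly.

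Once $\Phi,\aa_1\lori\dots\lori\aa_n\vdash\chi$ is in hand, I invoke Lemma \ref{lemma:provable normal form}, which gives $\psi\prilqd\aa_1\lori\dots\lori\aa_n$; chaining this derivation in front (using that the system is closed under composing derivations, i.e.\ substituting a derivation of $\aa_1\lori\dots\lori\aa_n$ from $\psi$ for the open assumption $\aa_1\lori\dots\lori\aa_n$) yields $\Phi,\psi\vdash\chi$. This contradicts the hypothesis, so the assumption that $\Phi,\aa_i\vdash\chi$ for all $i$ must fail, i.e.\ $\Phi,\aa\not\vdash\chi$ for some $\aa\in\R(\psi)$.

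I do not expect any serious obstacle here; the only point requiring a little care is the bookkeeping of open versus discharged assumptions in the iterated $(\lori\mathsf{e})$, and the observation that $\R(\psi)\neq\emptyset$ so that the disjunction $\aa_1\lori\dots\lori\aa_n$ is well-formed and the induction has a base case. Everything else is a routine manipulation of natural-deduction derivations, using only rules that are available without \dn.
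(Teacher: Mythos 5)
Your proposal is correct and follows exactly the paper's own argument: contraposition, iterated application of $(\lori\mathsf{e})$ to obtain $\Phi,\aa_1\lori\dots\lori\aa_n\vdash\chi$, and then Lemma \ref{lemma:provable normal form} to replace the disjunction by $\psi$. The extra remarks about the induction on $n$ and the nonemptiness of $\R(\psi)$ are sound bookkeeping that the paper leaves implicit.
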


\begin{proof} We will prove the contrapositive: if $\Phi,\aa\vdash\chi$ for all $\aa\in\R(\psi)$, then $\Phi,\psi\vdash\chi$. Let $\R(\psi)=\{\aa_1,\dots,\aa_n\}$. The rule $(\lori\!\textsf{e})$ ensures that, if we have $\Phi,\aa_i\vdash\chi$ for $1\le i\le n$, we also have $\Phi,\aa_1\lori\dots\lori\aa_n\vdash\chi$. Since the previous lemma gives $\psi\vdash\aa_1\lori\dots\lori\aa_n$, we also get $\Phi,\psi\vdash\chi$.
\end{proof}

\noindent The next lemma extends this result from a single assumption to the whole set.

\begin{lemma}[Traceable deduction failure]\label{lemma:global specification lemma}~\\ 
If $\Phi\not\vdash\psi$, then $\Gamma\not\vdash\psi$ for some set of standard formulas $\Gamma$ which contains a resolution for each $\phi\in\Phi$.
\end{lemma}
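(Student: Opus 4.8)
The idea is to bootstrap from Lemma~\ref{lemma:specification lemma}, which performs the replacement for a single assumption, to the whole set $\Phi$ at once. The finite case fixes ideas: if $\Phi=\{\phi_1,\dots,\phi_n\}$ and $\Phi\not\vdash\psi$, apply Lemma~\ref{lemma:specification lemma} repeatedly. From $\phi_1,\dots,\phi_n\not\vdash\psi$ we get $\aa_1\in\R(\phi_1)$ with $\aa_1,\phi_2,\dots,\phi_n\not\vdash\psi$; feeding this back to the lemma yields $\aa_2\in\R(\phi_2)$ with $\aa_1,\aa_2,\phi_3,\dots,\phi_n\not\vdash\psi$; after $n$ steps we obtain $\{\aa_1,\dots,\aa_n\}\not\vdash\psi$, a set of standard formulas containing one resolution for each $\phi_i$.

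For arbitrary $\Phi$, the plan is to run this replacement process simultaneously via Zorn's Lemma. Consider the poset $P$ of partial selection functions $f$: each $f$ has a domain $D_f\subseteq\Phi$, satisfies $f(\phi)\in\R(\phi)$ for all $\phi\in D_f$, and is subject to the constraint that the ``mixed'' set $\Phi_f:=\{f(\phi)\mid\phi\in D_f\}\cup(\Phi\setminus D_f)$ still satisfies $\Phi_f\not\vdash\psi$. Order $P$ by extension. The empty function lies in $P$ since $\Phi_\emptyset=\Phi$ and $\Phi\not\vdash\psi$ by hypothesis, so $P\neq\emptyset$. I would then check that any chain $\{f_i\}_{i\in I}\subseteq P$ has an upper bound, namely its union $f$: if $\Phi_f\vdash\psi$, then since derivations are finite some finite $\Phi_0\subseteq\Phi_f$ already has $\Phi_0\vdash\psi$; each element of $\Phi_0$ is either an original formula lying outside $D_f$ (hence outside every $D_{f_i}$) or a resolution $f(\phi)=f_i(\phi)$ contributed by some $f_i$, and as there are finitely many of the latter and $I$ is directed, a single index $j$ serves for all of them, giving $\Phi_0\subseteq\Phi_{f_j}$ and so $\Phi_{f_j}\vdash\psi$, contradicting $f_j\in P$. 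Thus Zorn's Lemma provides a maximal $f\in P$.

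It remains to see that $D_f=\Phi$, which finishes the proof with $\Gamma:=\{f(\phi)\mid\phi\in\Phi\}$. If $D_f\neq\Phi$, pick $\phi_0\in\Phi\setminus D_f$ and write $\Phi_f=\Phi'\cup\{\phi_0\}$ where $\Phi'=\{f(\phi)\mid\phi\in D_f\}\cup(\Phi\setminus(D_f\cup\{\phi_0\}))$. Since $\Phi',\phi_0\not\vdash\psi$, Lemma~\ref{lemma:specification lemma} supplies some $\aa\in\R(\phi_0)$ with $\Phi',\aa\not\vdash\psi$; extending $f$ by setting $f(\phi_0):=\aa$ produces a strictly larger member of $P$, contradicting maximality.

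\textbf{Main obstacle.} The only real subtlety is the infinite case, and within it the verification that $P$ is closed under unions of chains: this is precisely where one must invoke the finiteness of derivations in the proof system of Figure~\ref{fig:proof system lqd}. Everything else is a mechanical iteration of Lemma~\ref{lemma:specification lemma}. (If the intended application only involves finite, or countable, $\Phi$, the Zorn argument can be replaced by the finite iteration above, respectively by a transfinite recursion along an enumeration of $\Phi$, with the same underlying reasoning.)
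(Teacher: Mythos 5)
Your proof is correct, and its two essential ingredients are exactly those of the paper's proof: repeated application of Lemma~\ref{lemma:specification lemma} to swap one assumption at a time for a suitable resolution, and the finiteness of derivations to pass from the finitely-modified stages to the fully-modified set $\Gamma$. The difference is purely in the bookkeeping. The paper fixes an enumeration $(\phi_n)_{n\in\mathbb{N}}$ of $\Phi$ and runs an ordinary recursion along it, maintaining the invariant $\{\aa_i\mid i\le n\}\cup\{\phi_i\mid i>n\}\not\vdash\psi$, and then uses finiteness of proofs once at the end to conclude $\Gamma=\{\aa_n\mid n\in\mathbb{N}\}\not\vdash\psi$. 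You instead organize the same replacement process as a Zorn's Lemma argument on partial selection functions, invoking finiteness of derivations inside the chain-closure step and Lemma~\ref{lemma:specification lemma} inside the maximality step. Your version buys genuine extra generality: it covers $\Phi$ of arbitrary cardinality, whereas the paper's argument as written presupposes that $\Phi$ is countable (harmless if the set \P\ of atoms is countable, since then so is \L, but your proof does not need that assumption). The paper's version is more elementary and avoids choice beyond dependent choice. Both your chain argument (reducing a putative derivation from $\Phi_f$ to a finite subset realized at a single stage $f_j$) and your maximality argument (extending $f$ at any $\phi_0\notin D_f$ via Lemma~\ref{lemma:specification lemma}) check out, including the innocuous possible overlaps between resolutions and original formulas.
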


\begin{proof} Let us fix an enumeration of $\Phi$, say $(\phi_n)_{n\in\mathbb{N}}$. We are going to define a sequence  $(\aa_n)_{n\in\mathbb{N}}$ of standard formulas such that, for all $n\in\mathbb{N}$:
\begin{enumerate}
\item $\aa_n\in\R(\phi_n)$;
\item $\{\aa_i\,|\,i\le n\}\cup\{\phi_i\,|\,i> n\}\not\vdash\psi$. 
\end{enumerate}
We apply the previous lemma inductively. Suppose we have defined $\aa_i$ for $i<n$ and let us proceed to define $\aa_n$. The induction hypothesis tells us that $\{\aa_i\,|\,i<n\}\cup\{\phi_i\,|\,i\ge n\}\not\vdash\psi$, that is, $\{\aa_i\,|\,i<n\}\cup\{\phi_i\,|\,i> n\},\phi_n\not\vdash\psi$. By the previous lemma, we can find an $\aa_n\in\R(\phi_n)$ such that $\{\aa_i\,|\,i<{n}\}\cup\{\phi_i\,|\,i> n\},\aa_n\not\vdash\psi$. Hence, $\{\aa_i\,|\,i\le {n}\}\!\cup\!\{\phi_i\,|\,i> n\}\not\vdash\psi$, completing the inductive step.

Now let $\Gamma:=\{\aa_n\,|\,n\in\mathbb{N}\}$. By construction, $\Gamma$ is a set of standard formulas which contains a resolution of each $\phi\in\Phi$. We claim that $\Gamma\not\vdash\psi$. To see this, suppose towards a contradiction $\Gamma\vdash\psi$: then for some $n$ it should be the case that $\aa_1,\dots,\aa_n\vdash\psi$; but this is impossible, since by construction we have $\{\aa_1,\dots,\aa_n\}\cup\{\phi_{i}\,|\,i>n\}\not\vdash\psi$. Thus, we have $\Gamma\not\vdash\psi$, which completes the proof of the lemma.
\end{proof}

\noindent Now let $M_c^{\ipl}$ be the canonical Kripke model for intuitionistic logic (see, e.g., \citep{ZhaCha_ml}). We will prove strong completeness by showing that if $\Phi\not\vdash_\inqi\psi$, then we can find a team $t$ in the model $M_c^\ipl$ which supports all formulas in $\Phi$ but not $\psi$.

\begin{theorem}[Strong completeness]\label{th:strong completeness} $\Phi\entilqd\psi\;\Rightarrow\;\Phi\prilqd\psi$
\end{theorem}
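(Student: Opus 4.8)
The plan is to prove the contrapositive: assuming $\Phi\not\prilqd\psi$, I will exhibit a Kripke model and a team that support every formula in $\Phi$ but not $\psi$, and this model will be (a sub-team of) the canonical Kripke model $M_c^{\ipl}$ for intuitionistic logic. The first move is to peel off resolutions from the assumptions. By Lemma~\ref{lemma:global specification lemma} (Traceable deduction failure) there is a set $\Gamma$ of \emph{standard} formulas, containing a resolution $\aa_\phi\in\R(\phi)$ for every $\phi\in\Phi$, with $\Gamma\not\vdash\psi$. Writing $\R(\psi)=\{\beta_1,\dots,\beta_m\}$, Lemma~\ref{lemma:provable normal form} gives $\psi\preqilqd\beta_1\lori\dots\lori\beta_m$, and since each $\beta_j$ derives $\beta_1\lori\dots\lori\beta_m$ by $(\lori\mathsf{i})$ and hence derives $\psi$, the hypothesis $\Gamma\not\vdash\psi$ forces $\Gamma\not\vdash\beta_j$ for every $j$. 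As every intuitionistic natural-deduction derivation is in particular a $\prilqd$-derivation (the rules of \ipl\ are among those of Figure~\ref{fig:proof system lqd} other than $\dn$, and pure \ipl-derivations never leave $\L_!$, so the standardness restriction on $(\lor\mathsf{e})$ is vacuous there), we conclude $\Gamma\not\vdash_{\ipl}\beta_j$ for every $j\le m$.

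Next I pass to the canonical model. By completeness of \ipl\ with respect to $M_c^{\ipl}$, for each $j\le m$ there is a point $\Delta_j$ of $M_c^{\ipl}$ forcing all of $\Gamma$ but not $\beta_j$; via the truth lemma for the canonical model this reads $M_c^{\ipl},\Delta_j\models\gamma$ for all $\gamma\in\Gamma$ while $M_c^{\ipl},\Delta_j\not\models\beta_j$. (If $\beta_j=\bot$ this merely uses consistency of $\Gamma$, which holds since $\Gamma\not\vdash\psi$ implies $\Gamma\not\vdash\bot$.) Put $t:=\{\Delta_1,\dots,\Delta_m\}$. Since each $\gamma\in\Gamma$ is standard, hence truth-conditional by Proposition~\ref{prop:truth-conditionality}, and true at every $\Delta_j$, we get $M_c^{\ipl},t\models\gamma$; in particular $M_c^{\ipl},t\models\aa_\phi$ for every $\phi\in\Phi$, and because $\aa_\phi\models\phi$ (it is a disjunct of $\Lori\R(\phi)$, using Theorem~\ref{theor:normal form} and the support clause Definition~\ref{defsupportsemantics_indisj}), we obtain $M_c^{\ipl},t\models\phi$ for all $\phi\in\Phi$. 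On the other hand $\psi\equiv\beta_1\lori\dots\lori\beta_m$ by Theorem~\ref{theor:normal form}, so $M_c^{\ipl},t\models\psi$ would yield $M_c^{\ipl},t\models\beta_j$ for some $j$; but $\beta_j$ is truth-conditional, so this would force $M_c^{\ipl},\Delta_j\models\beta_j$, contradicting the choice of $\Delta_j$. Hence $M_c^{\ipl},t\not\models\psi$, so $\Phi\not\entilqd\psi$, which is the contrapositive of the theorem.

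Most of the work has already been discharged in Lemmas~\ref{lemma:provable normal form}--\ref{lemma:global specification lemma} and in the normal form theorem; the one genuinely delicate point in what remains is coordinating the counter-team. The team must \emph{jointly} force all of $\Gamma$ — which truth-conditionality makes automatic once each $\Delta_j$ does so individually — while the inquisitive disjunction $\beta_1\lori\dots\lori\beta_m$ fails on the whole team. The reason one refuting point per $\beta_j$ is exactly enough is that support of $\beta_1\lori\dots\lori\beta_m$ at $t$ would require a \emph{single} $\beta_j$ to be true at \emph{every} point of $t$, and by construction no such $j$ exists. This is the characteristic ``reduction to \ipl\ via resolutions'' that mirrors the classical completeness proof for \lqd.
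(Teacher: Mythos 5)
Your proof is correct and follows essentially the same route as the paper's: contraposition, the traceable-deduction-failure lemma to extract a standard set $\Gamma$, one canonical-model witness per resolution of $\psi$, and then truth-conditionality plus the normal form theorem to verify the counter-team. The only differences are cosmetic elaborations (spelling out why $\Gamma\not\prilqd\beta_j$ yields $\Gamma\not\vdash_{\ipl}\beta_j$, and using truth-conditionality where the paper invokes persistency in the final step), neither of which changes the argument.
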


\begin{proof} By contraposition, suppose $\Phi\not\prilqd\psi$. By Lemma \ref{lemma:global specification lemma} we have a set $\Gamma$ of standard formulas which contains a resolution of each $\phi\in\Phi$ and such that $\Gamma\not\vdash\psi$. Now let $\R(\psi)=\{\aa_1,\dots,\aa_n\}$: we must have $\Gamma\not\vdash\aa_i$ for $1\le i\le n$: for otherwise, using $(\lori\textsf{i})$ and Lemma \ref{lemma:provable normal form} we could conclude $\Gamma\vdash\psi$.

Now consider any $i\le n$: $\Gamma\cup\{\aa_i\}$ is a set of standard formulas, and $\Gamma\not\prilqd\aa_i$. Since our system includes a complete proof system for intuitionistic logic, and since truth for standard formulas coincides with truth in Kripke semantics, we know from the properties of the canonical model $M_c^\ipl$ that we can find a world $w_i$ in $M_c^{\ipl}$ such that $M_c^{\ipl},w_i\models\gamma$ for all $\gamma\in\Gamma$, but $M_c^{\ipl},w_i\not\models\aa_i$. 

Now let consider the team $t:=\{w_1,\dots,w_n\}$. We claim that $t$ supports all formulas in $\Phi$ but does not support $\psi$. First take any $\phi\in\Phi$. The set $\Gamma$ contains a resolution $\gamma\in\R(\phi)$. By construction, we know that each formula in $\Gamma$ is true at each world in $t$. Moreover, all formulas in $\Gamma$ are standard formulas, and therefore they are truth-conditional (Proposition \ref{prop:truth-conditionality}). This implies $M_c^{\ipl},t\models\gamma$. Now, the normal form result (Theorem \ref{theor:normal form}) implies that $\gamma\models_\inqi\phi$. Therefore, $M_c^{\ipl},t\models\phi$. Since $\phi$ was an arbitrary element of $\Phi$, the team $t$ supports all formulas in $\Phi$.

To see that $t$ does not support $\psi$, suppose towards a contradiction that $M_c^{\ipl},t\models\psi$. By the normal form theorem  this would mean that $M_c^{\ipl},t\models\aa_i$ for some $i\le n$. Since $w_i\in t$, by persistency we should also have $M_c^{\ipl},w_i\models\aa_i$, which is a contradiction.

We have thus found a team in the Kripke model $M_c^{\ipl}$ which supports all formulas in $\Phi$ but not $\psi$, witnessing that $\Phi\not\models_\inqi\psi$.
\end{proof}

\noindent Notice that all that we used about our proof system to obtain this result was that (i) it includes a proof system for intuitionistic logic and (ii) it is capable of proving the equivalence between a formula and its normal form. The first feature pins down the underlying logic of statements, while the latter completely captures the behavior of propositional questions relative to this underlying logic.

Also, notice that in terms of entailment, the only difference between \inqi\ and \lqd\ is the validity in \lqd\ of the double negation law for the standard language---in other words, the fact that the underlying logic of statements is classical in the case of \lqd, but intuitionistic in the case of \inqi. This shows that the logical features of questions and dependencies are quite modular with respect to the underlying logic of statements.


\subsection{The $\lor$-free fragment of \ilqd}

Consider the sub-logic \inqim\ obtained from \inqi\ by dropping the standard disjunction $\lor$ from the language. If we simply regard $\lori$ as the  ``official'' disjunction operator of the system, \inqim\ has a standard repertoire of connectives. Thus, it can  be meaningfully compared in terms of strength with ordinary logics, such as intuitionistic logic, classical logic, and other intermediate logics. Our axiomatization result for \inqi\ immediately shows that all principles of intuitionistic logic are valid for \inqim. Moreover, all principles of \inqim\ are valid in classical logic, for it is easy to see that any entailment which is not classically valid can be falsified in a Kripke model consisting of a single world. Thus, \inqim\ is a logic which is intermediate in strength between intuitionistic and classical logic. However, \inqim\ is not an intermediate logic in the usual sense, since its set of validities, just as in the case of \lqd\ and \inqi, is not closed under uniform substitution: for instance, the formula $(p\to q\lori r)\to (p\to q)\lori (p\to r)$ is valid in \inqim, although replacing $p$ by $q\lori r$ yields an obviously invalid formula. 

It turns out that our results above can easily be adapted to give a sound and complete axiomatization of \inqim. Such an axiomatization is obtained simply by dropping from the system given for $\prilqd$ all the rules dealing with $\lor$. This leaves us with what is essentially the standard natural deduction system for intuitionistic logic---with $\lori$ now in the role of intuitionistic disjunction---augmented with the split scheme \textsf{S}. Denoting this system by $\vdash_{\inqim}$, we have the following result.

\begin{theorem}[Soundness and completeness for \inqim]~\\
For any formulas $\Phi\cup\{\psi\}$ in the language of \inqim: $\Phi\models_{\inqim}\psi\iff\Phi\vdash_{\inqim}\psi$
\end{theorem}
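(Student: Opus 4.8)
The plan is to leverage the two corresponding results already established for the full system \inqi, namely soundness (straightforward rule-checking) and strong completeness (Theorem \ref{th:strong completeness}), and simply observe that the restriction to the $\lor$-free fragment causes no essential difficulty. For soundness, each inference rule of $\vdash_{\inqim}$ is already among the rules of $\prilqd$ (we have only deleted the rules governing $\lor$), and each was shown sound with respect to $\models_\inqi$; since $\models_{\inqim}$ is just the restriction of $\models_\inqi$ to $\lor$-free formulas, soundness is immediate.

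For completeness, the key observation is that the entire completeness argument for \inqi\ can be run inside the $\lor$-free fragment. First I would check that all the auxiliary machinery survives: the resolutions $\R(\phi)$ of a $\lor$-free formula $\phi$ are again $\lor$-free standard formulas (this is clear from Definition \ref{def:resolutions}, since the clause for $\lor$ is never invoked, and the clauses for $\bot,\land,\to,\lori$ produce only $\lor$-free outputs); hence Lemma \ref{lemma:provable normal form} (provable normal form), Lemma \ref{lemma:specification lemma}, and Lemma \ref{lemma:global specification lemma} all go through verbatim for $\vdash_{\inqim}$, because the proofs of these lemmata only use the rules $(\lori\mathsf{e})$, $(\lori\mathsf{i})$, and the equivalences of Theorem \ref{theor:normal form}, none of which mention $\lor$. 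One must verify in particular that the normal-form equivalences used in Lemma \ref{lemma:provable normal form} — distributivity of $\lori$ over $\to$ via the split scheme \textsf{S}, distributivity over $\land$, and $(\theta_1\lori\theta_2)\to\delta\preqilqd(\theta_1\to\delta)\land(\theta_2\to\delta)$ — are all derivable using only the intuitionistic rules for $\land,\to,\bot,\lori$ together with \textsf{S}, i.e.\ without any $\lor$-rule; this is the one point that genuinely needs to be looked at, though it is routine.

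Given these lemmata, strong completeness for \inqim\ follows by repeating the proof of Theorem \ref{th:strong completeness}: if $\Phi\not\vdash_{\inqim}\psi$ with $\Phi\cup\{\psi\}$ $\lor$-free, Lemma \ref{lemma:global specification lemma} yields a set $\Gamma$ of $\lor$-free standard formulas containing a resolution of each $\phi\in\Phi$ with $\Gamma\not\vdash_{\inqim}\psi$; writing $\R(\psi)=\{\aa_1,\dots,\aa_n\}$ we get $\Gamma\not\vdash_{\inqim}\aa_i$ for each $i$ as before. Now $\Gamma\cup\{\aa_i\}$ is a set of standard formulas, and since the $\lor$-free system still contains a complete proof system for \emph{implicational-conjunctive-disjunctive} intuitionistic logic (with $\lori$ playing the role of intuitionistic $\lor$), non-derivability $\Gamma\not\vdash_{\inqim}\aa_i$ transfers to non-derivability in \ipl\ for the corresponding translation, and one extracts a world $w_i$ of the canonical model $M_c^\ipl$ with $M_c^\ipl,w_i\models\gamma$ for all $\gamma\in\Gamma$ but $M_c^\ipl,w_i\not\models\aa_i$. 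The team $t=\{w_1,\dots,w_n\}$ then supports all of $\Phi$ (by truth-conditionality of standard formulas, Proposition \ref{prop:truth-conditionality}, plus the normal form theorem) but fails to support $\psi$, exactly as in the proof of Theorem \ref{th:strong completeness}. The main obstacle — and it is a mild one — is pinning down precisely which intuitionistic completeness fact is being invoked when we pass from $\Gamma\not\vdash_{\inqim}\aa_i$ to a counter-world: one should note that $\vdash_{\inqim}$ restricted to $\lor$-free standard formulas coincides with intuitionistic derivability in the $\{\bot,\land,\to,\lori\}$-fragment (which is full \ipl, since disjunction is definable-free but simply renamed), so the canonical-model properties apply directly.
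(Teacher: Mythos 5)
Your proposal is correct and takes essentially the same approach as the paper: the paper's own proof is a one-sentence remark that one repeats the completeness argument for \inqi\ and verifies that none of the $\lor$-rules is ever needed. You have simply carried out that verification explicitly (closure of resolutions under $\lor$-freeness, derivability of the normal-form equivalences from the $\{\bot,\land,\to,\lori\}$-rules plus \textsf{S}, and the canonical-model step), which is exactly what the paper intends.
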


\begin{proof} We just need to repeat for the $\lor$-free fragment the same proof given above for the whole language, and verify that none of the rules concerned with $\lor$ is ever needed to reach the conclusion. 
\end{proof}

\noindent Pun\v coch\'a\v r \cite{Puncochar:15generalization} discusses a family of logics, called \emph{G-logics}, which, like our logic \inqi, can be seen as variants of classical inquisitive logic where the underlying logic of statements is weaker than classical logic. The result we have just established tells us that the $\lor$-free fragment of our intuitionistic inquisitive logic \inqi\ coincides with the least element of this family, the logic that Pun\v cocha\v r denotes by \textsf{IL+H}.

\subsection{Translation of \lqd\ into \inqi}
\label{sub:negative translation}

Glivenko's theorem states that a formula $\phi$ is valid in classical propositional logic iff its double negation $\neg\neg\phi$ is valid in intuitionistic propositional logic. In fact, the map $\phi\mapsto\neg\neg\phi$ yields an embedding of \cpl\ into \ipl. Does the same relation hold between the classical system \lqd\ and its intuitionistic counterpart \inqi? 

The answer is negative: for instance, the formula $?p$ is not valid in \lqd, but it is easy to see that its double negation $\neg\neg{?p}$ is valid in \inqi. Nevertheless, an embedding of \lqd\ into \inqi\ can be obtained if we take care of distributing the double negation translation over the resolutions of a formula. That is, if $\R(\phi)=\{\aa_1,\dots,\aa_k\}$, we make the following definition:
$$\phi^n:=\neg\neg\aa_1\lori\dots\lori\neg\neg\aa_k$$
\noindent If $\aa$ is a standard formula, $\aa^n$ coincides with the double negation $\neg\neg\aa$. Glivenko's theorem generalizes to the inquisitive setting in the following form.

\begin{proposition} $\models_{\lqd}\phi\iff\models_{\inqi}\phi^n$
\end{proposition}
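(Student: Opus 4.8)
The plan is to reduce the statement to the classical normal form theorem (Theorem~\ref{theor:normal form cl}), the conservativity results, and ordinary Glivenko for propositional logic. The key observation is that both $\models_{\lqd}\phi$ and $\models_{\inqi}\phi^n$ can be reduced, via the respective normal form results, to statements about the resolutions $\aa_1,\dots,\aa_k$ of $\phi$, which are \emph{standard} formulas.

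First I would handle the left-hand side. By Theorem~\ref{theor:normal form cl}, $\phi\equiv_{\lqd}\aa_1\lori\dots\lori\aa_k$, so $\models_{\lqd}\phi$ iff $\models_{\lqd}\aa_1\lori\dots\lori\aa_k$. By the split property for $\lori$ with $\Gamma=\emptyset$ (the classical analogue of Corollary~\ref{cor:dp for lori}), this holds iff $\models_{\lqd}\aa_i$ for some $i\le k$. Since each $\aa_i$ is standard, conservativity of \lqd\ over \cpl\ gives: $\models_{\lqd}\phi$ iff $\models_{\cpl}\aa_i$ for some $i\le k$.

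Next I would handle the right-hand side. By definition $\phi^n=\neg\neg\aa_1\lori\dots\lori\neg\neg\aa_k$, and each $\neg\neg\aa_i$ is a standard formula (indeed a negation, hence truth-conditional). By the normal form theorem in the intuitionistic setting (Theorem~\ref{theor:normal form}), or more directly since an $\lori$-disjunction of standard formulas is already essentially in normal form, together with the disjunction property for $\lori$ (Corollary~\ref{cor:dp for lori}), we get $\models_{\inqi}\phi^n$ iff $\models_{\inqi}\neg\neg\aa_i$ for some $i\le k$. Since $\neg\neg\aa_i$ is standard, conservativity of \inqi\ over \ipl\ (Theorem~\ref{conservativity_ipl}) gives $\models_{\inqi}\neg\neg\aa_i$ iff $\models_{\ipl}\neg\neg\aa_i$. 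Finally, ordinary Glivenko's theorem gives $\models_{\ipl}\neg\neg\aa_i$ iff $\models_{\cpl}\aa_i$. Chaining the equivalences, $\models_{\inqi}\phi^n$ iff $\models_{\cpl}\aa_i$ for some $i\le k$, which matches the left-hand side, completing the proof.

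I do not expect any serious obstacle here; the only point requiring mild care is the application of the disjunction property on the right-hand side, since $\R(\neg\neg\aa_1\lori\dots\lori\neg\neg\aa_k)=\bigcup_i\R(\neg\neg\aa_i)=\{\neg\neg\aa_1,\dots,\neg\neg\aa_k\}$ (as each $\neg\neg\aa_i$ is standard, so $\R(\neg\neg\aa_i)=\{\neg\neg\aa_i\}$), so the normal form of $\phi^n$ is $\phi^n$ itself and Corollary~\ref{cor:dp for lori} applies directly. One should also note that Glivenko's theorem is being used in its standard propositional form, which is legitimate since all the $\aa_i$ lie in $\L_!$. It may also be worth remarking that this argument in fact shows $\phi^n$ is the ``right'' translation precisely because distributing $\neg\neg$ over the resolutions commutes with taking normal forms, whereas $\neg\neg\phi$ does not: e.g.\ for $\phi={?p}$ one has $\phi^n=\neg\neg p\lori\neg\neg\neg p$, which is not valid in \inqi\ by the disjunction property, matching $\not\models_{\lqd}{?p}$, while $\neg\neg{?p}$ \emph{is} valid in \inqi.
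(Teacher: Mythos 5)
Your proof is correct and follows essentially the same route as the paper's: normal form plus the disjunction property for $\lori$ in both systems reduces each side to the validity of a single resolution $\aa_i$, and conservativity over \cpl\ and \ipl\ together with ordinary Glivenko links the two. The paper writes out only one direction and declares the converse analogous, whereas you phrase it as a single chain of equivalences; the extra remark that $\R(\phi^n)=\{\neg\neg\aa_1,\dots,\neg\neg\aa_k\}$ is a useful bit of added care but not a different argument.
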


\begin{proof} Recall that the disjunction property holds for inquisitive disjunction $\lori$ in both systems \lqd\ and \inqi. Suppose $\models_{\lqd}\phi$: since $\phi\equiv_{\lqd}\aa_1\lori\dots\lori\aa_k$, the disjunction property yields $\models_{\lqd}\aa_i$ for some $i\le k$. But $\aa_i$ is a standard formula, and on standard formulas \lqd\ coincides with \cpl. So $\aa_i$ is classically valid, which by Glivenko's theorem implies that $\neg\neg\aa_i$ is valid in \ipl. Since \inqi\ coincides with \ipl\ on standard formulas, we have $\models_{\inqi}\neg\neg\aa_i$, which immediately implies $\models_{\inqi}\phi^n$.  The argument for the converse direction is analogous.
\end{proof}

\noindent
In fact, it is easy to strengthen this result to show that $(\cdot)^n$ gives a translation of \lqd\ into \inqi, in the following sense. 

\begin{proposition} Let $\Phi\cup\{\psi\}\subseteq\L$ and let $\Phi^n=\{\phi^n\mid \phi\in\Phi\}$. Then: 
$$\Phi\models_{\lqd}\psi\iff \Phi^n\models_\inqi\psi^n$$
\end{proposition}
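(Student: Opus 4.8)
The plan is to collapse both sides of the claimed equivalence onto one and the same combinatorial statement about resolutions, and then to pass between the classical and the intuitionistic version of that statement by a Glivenko-type lemma for standard formulas. First I would reduce to the finite case: both \lqd\ and \inqi\ are compact --- for \inqi\ this is immediate from strong completeness (Theorem~\ref{th:strong completeness}), and for \lqd\ from the completeness result cited in Section~\ref{sub:logic classical} --- so it is enough to prove $\Phi\models_{\lqd}\psi\iff\Phi^n\models_{\inqi}\psi^n$ for \emph{finite} $\Phi=\{\phi_1,\dots,\phi_m\}$; the general case then follows since every finite subset of $\Phi^n$ has the form $\Phi_0^n$ with $\Phi_0\subseteq\Phi$ finite.

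Assume then that $\Phi$ is finite, put $\R(\psi)=\{\aa_1,\dots,\aa_n\}$, and set $D:=\{\gamma_1\land\dots\land\gamma_m\mid\gamma_i\in\R(\phi_i)\}$; by Definition~\ref{def:resolutions} this is exactly $\R(\phi_1\land\dots\land\phi_m)$, and every $\delta\in D$ is a standard formula. For $\delta=\gamma_1\land\dots\land\gamma_m$ write $\delta^{\neg\neg}:=\neg\neg\gamma_1\land\dots\land\neg\neg\gamma_m$; since $\R(\sigma)=\{\sigma\}$ for every standard $\sigma$, one checks that $\R(\phi_i^n)=\{\neg\neg\gamma\mid\gamma\in\R(\phi_i)\}$, hence $\R(\phi_1^n\land\dots\land\phi_m^n)=\{\delta^{\neg\neg}\mid\delta\in D\}$, and $\R(\psi^n)=\{\neg\neg\aa_1,\dots,\neg\neg\aa_n\}$. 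I would then run the chain
\begin{align*}
\Phi\models_{\lqd}\psi
&\iff \phi_1\land\dots\land\phi_m\models_{\lqd}\psi\\
&\iff \Lori D\models_{\lqd}\aa_1\lori\dots\lori\aa_n\\
&\iff \text{for every }\delta\in D:\ \delta\models_{\lqd}\aa_1\lori\dots\lori\aa_n\\
&\iff \text{for every }\delta\in D\text{ there is }j\le n\text{ with }\delta\models_{\textsf{CPL}}\aa_j,
\end{align*}
where the second step is the classical normal form (Theorem~\ref{theor:normal form cl}), the third is the support clause for $\lori$, and the last uses that each $\delta$ is standard --- hence truth-conditional --- so that the split property (Proposition~\ref{prop:split classical}) applies, together with conservativity of \lqd\ over \cpl. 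The same reasoning on the intuitionistic side --- now using Theorem~\ref{theor:normal form}, the split property Theorem~\ref{theor:split} (applicable because each $\delta^{\neg\neg}$ is standard, hence truth-conditional), and conservativity over \ipl\ (Theorem~\ref{conservativity_ipl}) --- yields that $\Phi^n\models_{\inqi}\psi^n$ iff for every $\delta\in D$ there is $j\le n$ with $\delta^{\neg\neg}\models_{\textsf{IPL}}\neg\neg\aa_j$.

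It then remains to prove the bridging lemma: for standard $\delta=\gamma_1\land\dots\land\gamma_m$ and standard $\aa$, one has $\delta\models_{\textsf{CPL}}\aa$ iff $\delta^{\neg\neg}\models_{\textsf{IPL}}\neg\neg\aa$. I would argue that $\delta\models_{\textsf{CPL}}\aa$ iff $\delta\to\aa$ is classically valid, iff (Glivenko's theorem) $\neg\neg(\delta\to\aa)$ is intuitionistically valid, iff $\delta^{\neg\neg}\to\neg\neg\aa$ is intuitionistically valid --- using the intuitionistic equivalences $\neg\neg(A\to B)\leftrightarrow(\neg\neg A\to\neg\neg B)$ and $\neg\neg(A\land B)\leftrightarrow(\neg\neg A\land\neg\neg B)$, the latter applied repeatedly to rewrite $\neg\neg\delta$ as $\delta^{\neg\neg}$ --- iff $\delta^{\neg\neg}\models_{\textsf{IPL}}\neg\neg\aa$. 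Plugging this into the two characterizations above gives $\Phi\models_{\lqd}\psi\iff\Phi^n\models_{\inqi}\psi^n$; the case $\Phi=\emptyset$ recovers the preceding proposition.

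I do not expect a genuine obstacle --- the argument is essentially bookkeeping --- but the step that requires care is checking that all auxiliary conjunctions occurring in the proof ($\delta$ and $\delta^{\neg\neg}$) are \emph{standard}, since this is precisely what licenses the two uses of the split property and the two appeals to conservativity. The only non-mechanical sub-fact is the equivalence $\neg\neg(A\to B)\leftrightarrow(\neg\neg A\to\neg\neg B)$, whose right-to-left half follows by observing that $\neg(A\to B)$ intuitionistically entails both $\neg\neg A$ and $\neg B$.
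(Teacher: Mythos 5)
Your proof is correct, and it is essentially the argument the paper has in mind: the paper states this proposition without proof, presenting it as an easy strengthening of the preceding validity-only version, and your argument upgrades exactly the three ingredients of that earlier proof (inquisitive normal form, the disjunction property, Glivenko) to the entailment setting --- the split property applied to the standard, hence truth-conditional, combined resolutions $\delta$ and $\delta^{\neg\neg}$ in place of the disjunction property, and the deducibility form of Glivenko, $\delta\models_{\textsf{CPL}}\aa\iff\delta^{\neg\neg}\models_{\ipl}\neg\neg\aa$, which you correctly reduce to the intuitionistic laws $\neg\neg(A\to B)\leftrightarrow(\neg\neg A\to\neg\neg B)$ and $\neg\neg(A\land B)\leftrightarrow(\neg\neg A\land\neg\neg B)$. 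The only point worth flagging is the appeal to compactness of \lqd: the paper only asserts ``soundness and completeness'' for that system, so you should either note that the cited result is \emph{strong} completeness for a finitary calculus (it is), or sidestep compactness altogether by selecting one resolution per premise as in the traceable-deduction-failure argument (Lemma~\ref{lemma:global specification lemma}) run semantically.
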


\noindent What this result tells us is that the Lindenbaum-Tarski algebra of the logic \lqd\ embeds via $(\cdot)^n$ into the one of the logic \inqi. Thus, \lqd\ can be identified with a fragment of \inqi, namely, the fragment consisting of formulas which are equivalent to an inquisitive disjunction of negations.

An interesting question that remains is whether a translation of \lqd\ into \inqi\ can be defined directly by induction on the structure of the formula $\phi$, in analogy with the G\"odel-Gentzen negative translation of \cpl\ into \ipl. Interestingly, this does not appear to be the case, but we will not try to provide an impossibility proof here.

\subsection{Translation of \inqi into downward closed modal team logic}
\label{sub:godel translation}

G\"{o}del's \Sf\ translation embeds \ipl into the classical modal logic \Sf---the logic of Kripke frames with a reflexive and transitive accessibility relation. It is easy to generalize this translation to an embedding from \inqi into \Sf modal dependence logic \cite{Vaananen:08} extended with intuitionistic connectives, also called downward closed modal team logic \MT in \cite{Yang:16}.

Let us briefly recall the basics of \MT. We refer the reader to \cite{Yang:16} for further information.
\begin{definition}[Language \Ld]\label{def:mid language}~\\
The set \Ld of formulas of \MT is defined inductively as follows:%
$$\phi\;::=\;p\mid\bot\mid\dep(p_1,\dots,p_n,q)\mid\phi\land\phi\mid\phi\lor\phi\mid\phi\to\phi\mid\phi\lori\phi\mid\Box\phi\mid\Diamond\phi$$%
\end{definition}

\noindent
Formulas of \MT are evaluated on modal Kripke models, i.e., triples $M=\<W,R,V\>$, where $W$ is a set, $R$ is a binary relation on $W$, and $V:W\times\P\to\{0,1\}$ is a valuation function.
Exactly as with \inqi, a team in a modal Kripke model $M=\<W,R,V\>$ is a set of worlds $t\subseteq W$. However, in this setting a team $t'$ is said to be an $R$-successor of a team $t$, notation $tRt'$, in case $t'\subseteq R[t]$ and for all $w\in t$, $R[w]\cap t'\neq \emptyset$.

\begin{definition}[Team semantics for \MT]\label{defsupportsemantics_mt}~\\ 
Let $M=\<W,R,V\>$ be a modal Kripke model. The relation of satisfaction between teams $t$ and formulas $\phi\in\Ld$ is defined inductively like the support relation defined in Definitions \ref{defsupportsemantics} and \ref{defsupportsemantics_indisj} except for the following clauses for dependence atoms, implication and modalities:\footnote{It is easy to see that in \MT, the dependence atoms are definable by means of the remaining connectives in exactly the way we described in Section \ref{sub:dependence}, i.e., by taking $=\!\!(p_1,\dots,p_n,q)$ to be an abbreviation for ${?p_1}\land\dots\land{?p_n}\to{?q}$. Thus, dependence atoms are not really needed in the syntax of \MT. Also, notice that our translation does not make use of $\Diamond$, so that our translation can also be viewed as an imbedding into the $\Diamond$-free fragment of \MT.}
\begin{itemize}
\item $M,t\models\dep(p_1,\dots,p_n,q)\iff\forall w,w'\in t$, $V(w,p_i)=V(w',p_i)$ for all $1\leq i\leq p$ implies $V(w,q)=V(w',q)$
\item $M,t\models\phi\to\psi\iff\forall t'\subseteq t$, $\,M,t'\models\phi$ implies $M,t'\models\psi$
\item $M,t\models\Diamond\phi \iff\exists t'$ such that $tRt'$ and $M,t'\models\phi$
\item $M,t\models\Box\phi \iff\forall t'$, $tRt'$ implies $M,t'\models\phi$
\end{itemize}
\end{definition}
\noindent
\MT-formulas enjoy the same empty team property as \inqi, and the following version of the persistency property:
\begin{itemize}
\item Persistency property: if $M,t\models\phi$ and $s\subseteq t$, then $M,s\models\phi$
\end{itemize}
In view of persistency, the semantic clause for $\Box\phi$ can also be written as: 
\begin{itemize}
\item $M,t\models\Box\phi \iff M,R[t]\models\phi$
\end{itemize}
\noindent
We call a formula $\phi\in\Ld$ that does not have any occurrences of dependence atoms or $\lori$ a \emph{standard formula}. As with \inqi, standard formulas  of \MT are also truth conditional in the same sense (see Definition \ref{truth_cond_def}). 
There is also a disjunctive normal form for \MT that is very similar to the one for \inqi: every \MT-formula is equivalent to a formula of the form $\alpha_1\lori\cdots\lori\alpha_n$, where each $\alpha_i$ is a standard formula. A natural deduction system that is sound and  (strongly) complete with respect to \MT can be found in \cite{Yang:16}. Without going into detail we remark that using the disjunctive normal form of \MT, by a very similar argument to that of Theorem \ref{th:strong completeness}, one can prove that the system of \MT extended with the usual $\mathsf{T}$ axiom $\Box\alpha\to\alpha$ and $\mathsf{4}$ axiom $\Box\alpha\to\Box\Box\alpha$ for standard formulas $\alpha$, denoted \Sf-\MT, is sound and (strongly) complete with respect to the class of reflexive and transitive modal Kripke frames\footnote{It is easy to check that reflexive and transitive frames validate the $\mathsf{T}$ and $\mathsf{4}$ axioms for standard formulas. For the strong completeness, assume $\Phi\not\vdash_{\Sf-\MT}\psi$. Suppose $\psi\dashv\vdash\alpha_1\lori\cdots\lori\alpha_n$, and $\phi_i\dashv\vdash\gamma_{i1}\lori\cdots\lori\gamma_{in_i}$ for every $\phi_i\in \Phi$. Then, $\Gamma\not\vdash_{\Sf-\MT}\psi$ for some set $\Gamma$ consisting of one disjunct $\gamma_{ii_k}$ for each formula $\phi_i$ in $\Phi$, and therefore $\Gamma\not\vdash_{\Sf-\MT}\alpha_i$ for all $\alpha_i$. For each $i$, the canonical model $M_c^{\Sf}$ of \Sf (whose frame is reflexive and transitive) contains a witness $w_i$ such that $M_c^{\Sf},w_i\models\gamma$ for all $\gamma\in \Gamma$, while $M_c^{\Sf},w_i\not\models\alpha_i$. For $t=\{w_1,\dots,w_n\}$, we get $M_c^{\Sf},t\models\gamma$ for all $\gamma\in \Gamma$, and $M_c^{\Sf},t\not\models\psi$.}, called \Sf-modal Kripke frames. We write $\models_{\Sf-\MT}\phi$ if $\phi$ is valid in this semantics with respect to \Sf\ models, that is, if $M,t\models\phi$ for all \Sf-modal Kripke models $M$ and teams $t$. 
G\"odel's \Sf\ translation can then be generalized to a translation of \inqi\ into \Sf-\MT as follows.

\begin{definition}[\Sf\ translation of \inqi\ into \MT]~\\
We define a translation ${(\cdot)}^\Box:\Lp\to\Ld$ 
inductively as follows:
\begin{itemize}
\item $p^\Box=\Box p$
\item $\bot^\Box=\Box\bot$
\item $(\phi\wedge\psi)^\Box=\phi^\Box\wedge\psi^\Box$
\item $(\phi\lor\psi)^\Box=\phi^\Box\lor\psi^\Box$
\item $(\phi\lori\psi)^\Box=\phi^\Box\lori\psi^\Box$
\item $(\phi\to\psi)^\Box=\Box(\phi^\Box\to\psi^\Box)$
\end{itemize}
\end{definition}

\begin{theorem}
$\models_{\inqi}\phi\iff \models_{\Sf-\MT}\phi^\Box$
\end{theorem}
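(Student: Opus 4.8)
The plan is to transplant to the team-semantic level the classical argument for the correctness of G\"odel's $\textsf{S4}$ translation. Everything will follow from a single semantic lemma relating the two semantics team-for-team; once that is in place, both directions of the biconditional are immediate. First I would fix the model transformation underlying the translation. Given a modal Kripke model $M=\<W,R,V\>$ whose relation is reflexive and transitive (an \Sf-model), define an intuitionistic Kripke model $M^+=\<W,R,V^+\>$ on the same frame by putting $V^+(w,p)=1$ iff $M,w\models\Box p$, equivalently iff $V(v,p)=1$ for every $v\in R[w]$. Transitivity of $R$ makes $V^+$ persistent, so $M^+$ is a legitimate intuitionistic Kripke model --- modulo the harmless point that $R$ need not be antisymmetric, which is dealt with either by first passing to the skeleton of $M$ or by recalling that none of the results of Section~\ref{sec:system} actually uses antisymmetry. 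The \emph{key lemma} is then:
$$M,t\models_{\MT}\phi^\Box\iff M^+,t\models_{\inqi}\phi\qquad\text{for every }\phi\in\Lp\text{ and every team }t.$$

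I would prove this by induction on $\phi$, which amounts to the six cases $p,\bot,\land,\lor,\lori,\to$ (note $\phi$ itself contains no modalities, so $\Box$ occurs in the translation only inside $p^\Box$, $\bot^\Box$ and $(\cdot\to\cdot)^\Box$). The cases of $\land,\lor,\lori$ are immediate, since $(\cdot)^\Box$ commutes with these connectives and their support clauses are literally the same in \inqi\ and \MT. For an atom, $M,t\models_{\MT}\Box p$ unfolds, via the clause $M,t\models\Box\theta\iff M,R[t]\models\theta$, to ``$V(w,p)=1$ for all $w\in R[t]$'', which is exactly what $M^+,t\models_{\inqi}p$ unfolds to given the definition of $V^+$; the case of $\bot$ is analogous (indeed $\Box\bot$ and $\bot$ are equivalent in \MT).

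The one case worth dwelling on --- and the step I expect to carry the real content --- is implication, since this is precisely where the two semantics differ: \MT\ evaluates $\theta\to\eta$ over subteams $t'\subseteq t$, while \inqi\ evaluates it over the wider class $t'\subseteq R[t]$, and moreover \MT-valuations need not be persistent. The box in $(\psi\to\chi)^\Box=\Box(\psi^\Box\to\chi^\Box)$ repairs both discrepancies at once. Using the $\Box$-clause, $M,t\models_{\MT}\Box(\psi^\Box\to\chi^\Box)$ iff $M,R[t]\models_{\MT}\psi^\Box\to\chi^\Box$ iff for every $t'\subseteq R[t]$, $M,t'\models_{\MT}\psi^\Box$ implies $M,t'\models_{\MT}\chi^\Box$; applying the induction hypothesis to $\psi$ and $\chi$ turns this into ``for every $t'\subseteq R[t]$, $M^+,t'\models_{\inqi}\psi$ implies $M^+,t'\models_{\inqi}\chi$'', which is exactly the \inqi\ support clause for $\psi\to\chi$ at $t$ (recall $M$ and $M^+$ carry the same relation $R$, so $R[t]$ is the same in both). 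Transitivity of $R$ is what makes this ``box of a box'' compute correctly, and reflexivity will be used in deriving the theorem.

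Finally I would read off the theorem from the lemma. For $\models_{\Sf-\MT}\phi^\Box\Rightarrow\models_{\inqi}\phi$: if $M',t'\not\models_{\inqi}\phi$ for some intuitionistic Kripke model $M'$ and team $t'$, then $M'$ is in particular an \Sf-model, and since its valuation is already persistent one checks --- this is where reflexivity enters --- that $M'^{+}=M'$; the lemma then gives $M',t'\not\models_{\MT}\phi^\Box$, hence $\not\models_{\Sf-\MT}\phi^\Box$. For the converse $\models_{\inqi}\phi\Rightarrow\models_{\Sf-\MT}\phi^\Box$: if $M,t\not\models_{\MT}\phi^\Box$ for some \Sf-model $M$ and team $t$, the lemma gives $M^+,t\not\models_{\inqi}\phi$, hence $\not\models_{\inqi}\phi$. (The same lemma in fact yields the stronger statement $\Phi\models_{\inqi}\psi\iff\Phi^\Box\models_{\Sf-\MT}\psi^\Box$, but only the validity form is needed here.) Essentially all the work is the routine unfolding of definitions in the induction; the single load-bearing observation is the interaction of $\Box$ with the implication clause noted above.
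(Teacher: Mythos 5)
Your proposal is correct and follows essentially the same route as the paper: the same translation of an \Sf-model into an intuitionistic model via $V^+(w,p)=1\iff M,w\models\Box p$, the same team-for-team induction lemma, and the same load-bearing use of the $\Box$-clause (read as $M,R[t]\models\cdot$) to reconcile the $t'\subseteq t$ clause of \MT\ with the $t'\subseteq R[t]$ clause of \inqi\ in the implication case. The only cosmetic difference is that the paper quotients by clusters ($w^c=\{v\mid wRv\text{ and }vRw\}$) to secure antisymmetry, which is exactly the skeleton remedy you mention in passing.
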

\begin{proof}
For every \Sf\ Kripke model $M=\<W,R,V\>$, define an intuitionistic Kripke model $\rho M=\<\rho W,\rho R,\rho V\>$ by taking
\begin{itemize}
\item $\rho W=\{w^c\mid w\in W\}$, where $w^c:=\{v\in W\mid wRv\text{ and }vRw\}$
\item $w^c\,\rho R\,v^c\iff wRv$
\item $\rho V(w^c,p)=1\iff M,w\models\Box p$
\end{itemize}
We leave it for the reader to verify (or see \S 3.9 in \cite{ZhaCha_ml}) that $\rho M$ is well-defined, and that if $M$ is an intuitionistic Kripke model,  then $M$, viewed as an \Sf-modal Kripke model, is isomorphic to $\rho M$. Letting $t^c=\{w^c\mid w\in t\}$, we claim that for any $\phi\in\Lp$: 
$$M,t\models_{\Sf-\MT}\phi^\Box\iff\rho M,t^c\models_{\inqi}\phi.$$ 

\noindent
The claim can be proved by induction on $\phi$. If $\phi=p$, then exploiting the definition of $\rho V$ and the fact that  $\Box p$ is standard and thus truth-conditional in \MT, we have:
\begin{eqnarray*}
\rho M, t^c\models_\inqi p & \iff & \forall w^c\in t^c: \rho V(w^c,p)=1\\
&\iff & \forall w\in t: M,w\models\Box p\\
&\iff & M,t\models\Box p 
\end{eqnarray*}
%
%
%
If $\phi=\bot$,  then
since $R$ is reflexive,
$$M,t\models_{\Sf-\MT}\Box\bot\iff R[t]=\emptyset\iff t^c=\emptyset\iff\rho M,t^c\models_{\inqi}\bot$$
For the only nontrivial inductive case $\phi=\psi\to\chi$, we have
\begin{align*}
&M,t\not\models_{\Sf-\MT} \Box(\psi^\Box\to\chi^\Box)\\
\iff& M,R[t]\not\models_{\Sf-\MT} \psi^\Box\to\chi^\Box\\
\iff& \exists s\subseteq R[t]\text{ s.t. }M,s\models_{\Sf-\MT} \psi^\Box\text{ and }M,s\not\models_{\Sf-\MT} \chi^\Box\\
\iff& \exists s^c\subseteq \rho R[t^c]\text{ s.t. }\rho M,s^c\models_{\inqi} \psi\text{ and }\rho M,s^c\not\models_{\inqi} \chi\\
&\quad\text{(by induction hypothesis, and since $s\subseteq R[t]$ iff $s^c\subseteq \rho R[t^c]$)}\\
\iff&\rho M,t^c\models_{\inqi}\psi\to\chi.
\end{align*}

We now complete the proof by applying the claim. If $\not\models_{\Sf-\MT}\phi^\Box$, then $M,t\not\models_{\Sf-\MT}\phi^\Box$ for some \Sf-modal Kripke model $M$, which by the claim implies that $\rho M,t^c\not\models_{\inqi}\phi$, and thereby $\not\models_{\inqi}\phi$.

 Conversely, if $\not\models_{\inqi}\phi$, then $M,t\not\models_{\inqi}\phi$ for some intuitionistic Kripke model $M$. The model $M$ can be viewed as an \Sf-modal Kripke model that is isomorphic to $\rho M$. Thus, by the claim, $M,t\not\models_{\Sf-\MT}\phi^\Box$, from which we conclude $\not\models_{\Sf-\MT}\phi^\Box$.
\end{proof}


\section{Related work}
\label{sec:related work}

\noindent In this section, we discuss in some detail the relation between our proposal and the related work developed by Pun\v coch\'a\v r in \cite{Puncochar:15generalization} and \cite{Puncochar:16algebras}.  Broadly speaking, Pun\v coch\'a\v r's aim is similar to the one we pursued, namely, to investigate how questions may be added, in the style of inquisitive semantics, to a propositional logic which is weaker than classical logic. In particular, in the two papers cited above Pun\v coch\'a\v r proposes two ways of generalizing inquisitive logic to a setting in which the underlying logic of statements is an intermediate logic, including intuitionistic logic (a further generalization to sub-structural logics is pursued in \cite{Puncochar:17}). 

Our proposal converges with Pun\v coch\'a\v r's in some important respects: most importantly, the intuitionistic inquisitive  logic that emerges from our system \inqi\ is a syntactic fragment of the logic that arises from the approach of \cite{Puncochar:16algebras} (where the language also contains a modal operator) and in turn, as pointed out above, it contains as a fragment the minimal element of the family of inquisitive logics studied in \cite{Puncochar:15generalization}.

At the same time, our work here differs from Pun\v coch\'a\v r's in its technical workings as well as in its conceptual focus. Let us first consider the technical differences. In \cite{Puncochar:15generalization}, the weakening of the base logic is obtained by assuming that only certain teams are available as information states. That is, the semantics is based on structures called \emph{spaces of information states}, which consist of a universe $W$ of worlds equipped with a designated family $I$ of subsets of $W$. In \cite{Puncochar:16algebras}, a similar but more abstract approach is taken: information states are treated as primitives, rather than as sets of worlds, and an algebraic structure on the space of information states is assumed; more precisely, the semantics is based on structures called \emph{algebras of information states}, which are join semilattices with a bottom element. In both cases, the semantics is a natural adaptation of the one given by Definition \ref{def:support classical}. 

In this paper, we have taken a different approach: information states are still viewed as arbitrary sets of worlds, but the worlds themselves, i.e., the actual states of affairs, may be partially defined in some respects; this means that an information state can be extended not only by ruling out some worlds, but also by making some of the worlds more defined. 
Technically, this is achieved by equipping the space of worlds with a binary relation $R$ that encodes when a state refines another---just as in standard Kripke semantics for intuitionistic logic. Here, too, the semantic clauses for the connectives remain essentially the same as in the classical case. Thus, our approach provides a different and somewhat more standard semantic framework that allows us to interpret both statements and questions in an intuitionistic logical setting.


This technical difference is also connected to a difference in focus between Pun\v coch\'a\v r's work and our own. 
Pun\v coch\'a\v r's main aim is to generalize the way that inquisitive logic builds on classical logic, and to show how many results obtained for inquisitive logic admit much more general counterparts. Thus, the scope of his work is very broad, and the focus is mostly on the logics themselves and the relations between them. By contrast, our focus here has been narrower---we have zoomed in on intuitionistic logic as a logic of statements---but our interest has been as much in the semantics itself as in the logic that arises from it. Our starting point was the semantic analysis of questions and dependency in a classical propositional framework, as given by propositional inquisitive and dependence logic; we have looked at how this analysis can be extended to a framework that does not assume the law of excluded middle, and where states of affairs are partial by default. This lead us to study in detail the features of inquisitive logic when worlds are embedded in an intuitionistic Kripke model (to the best of our knowledge, this is also the first study of a team-based semantics in this setting). Moreover, we have looked in detail at how the  analysis of propositional dependencies given by inquisitive and dependence logic extends to the intuitionistic setting, and at the interesting issues that arise in connection with the possible lack of a definite truth-value for a statement at a world. 

We think that our approach provides novel insight into the workings of questions and dependency in the intuitionistic setting, and that our results contribute to the exploration of the landscape of non-classical inquisitive logics which was initiated by Pun\v coch\'a\v r's work.

\section{Conclusion and further work}

The move from a world-based semantics to a team-based one---from states of affairs to states of information---allows inquisitive and dependence logic to broaden the scope of classical logic, bringing questions and dependencies into play. In this paper, we have seen that this move is possible also in the intuitionistic setting. In this case, we are dealing with information about states of affairs which can themselves be partial---i.e., not defined in every respect. Formally, this means dealing with teams of worlds embedded within an intuitionistic Kripke model---or, equivalently, with sets of rooted Kripke models. The semantics remains exactly the same as in the classical case, except that now, extending a state does not just amount to eliminating some possibilities, but also to refining some of them in one or more ways. Just like in the world-based setting, the classical system is obtained as a special case by restricting the intuitionistic semantics to discrete Kripke models, models in which worlds are already complete and never have proper refinements.

We have seen that many key features of propositional inquisitive logic are preserved in the intuitionistic setting. In particular, the disjunctive normal form theorem carries over unmodified, and a sound and complete axiomatization is obtained simply by dropping the double negation rule for statements. In other words, we have seen that the only difference between the classical and the intuitionistic version of inquisitive logic lies in the underlying logic of statements, while the relation between statements and questions is the same in both cases. 

From the perspective of the semantics, on the other hand, we find many interesting novelties with respect to the classical case. For instance, in the intuitionistic case, even simple polar questions have non-trivial presuppositions---i.e., cannot always be truthfully resolved. Dependencies may fail to hold even with respect to singleton teams.
Any formula which is not logically valid can be falsified relative to a single world. The disjunctions $\lor$ and $\lori$ have the same truth-conditional behavior, and the disjunction property holds for both, yet they have different logical properties.

Clearly, many interesting issues must be left for future work. We briefly mention some of them here. First, while in this paper we have restricted ourselves to a propositional language, our approach can be extended straightforwardly to the first-order case by working with Kripke models for intuitionistic predicate logic. 
The predicate logic case is very interesting from our point of view since, as shown in \cite{Ciardelli:16} for the classical setting, it allows us to capture many important classes of questions (e.g., the question $\forall x{?Px}$ of which individuals have property $P$) and dependencies (e.g., the formula $\forall x{?Px}\to \forall x{?Qx}$ holds if the extension of property $Q$ is determined by the extension of property $P$).

Remaining within propositional logic, it would be interesting to investigate more precisely the expressive power of \inqi: what properties of teams in a Kripke model can be expressed by means of a formula in \inqi? Similar issues have been addressed for modal dependence logic in \cite{KontinenMulleerSchnoorVollmer15}, and for inquisitive modal logic in \cite{CiardelliOtto:17}, which may provide a good starting point to answer the question for \inqi.

In a slightly different direction, it would also be interesting to look at the issue of frame definability in \inqi. Clearly, if a standard formula defines a certain frame class in \ipl, then this formula still defines the same class in \inqi. At the same time, however, some frame classes which cannot be characterized in \ipl\ can now be characterized with the help of inquisitive formulas: for instance, $?p$ characterizes the class of singleton frames. Recent work on frame definability in the context of modal dependence logic \cite{SanoVirtema:15,SanoVirtema:16} might provide a handle on this question.

Finally, the work done in this paper could be taken further by studying questions and dependency in the context of other non-classical logics besides intuitionistic logic. This research direction is explored in recent work by Pun\v coch\'a\v r \citep[][unpublished]{Puncochar:17} which considers a broad range of substructural logics. Other cases that seem worth investigating include Kleene's three-valued logic or other $n$-valued logics.


%

\newcommand{\SortNoop}[1]{}



\end{document}